\pgfplotsset{width=5cm,compat=1.9}
\pgfplotsset{compat = newest}
\newcommand{\cqm}{\mathbb W}
\newcommand{\univ}{\mathbb U_\Sigma}
\newcommand{\univnp}{\mathbb U}
\newcommand{\sfont}[1]{\mathfrak{#1}}
\newcommand{\peq}{\preccurlyeq}
\newcommand{\topo}{\text{\sc Top}}
\newcommand{\ctopo}{\text{\sc CTop}}
\newcommand{\sct}{\text{\sc Sct}}
\newcommand{\csct}{\text{\sc CSct}}
\newcommand{\val}[1]{\llbracket #1 \rrbracket}
\newcommand{\ignore}[1]{}
\newcommand{\dn}{\square}
\newcommand{\fulllan}{\mathsf{L}^{\blacklozenge\bullet}_{\lozenge}}
\newcommand{\glclan}{\mathsf{L}^{{\bullet}}_{\lozenge}}
\newcommand{\qm}{\mathfrak {Q}}
\newcommand{\ww}{\mathfrak {w}}
\newcommand{\vv}{\mathfrak v}
\newcommand{\Sim}{\mathbf{Sim}}
\newcommand{\sub}{\mathcal {S}}
\newcommand{\VV}{\mathfrak{A}}
\newtheorem{theorem}{Theorem}[section]
\newtheorem{corollary}[theorem]{Corollary}
\theoremstyle{definition}\newtheorem{definition}[theorem]{Definition}
\newtheorem{example}{Example}[section]
\newtheorem{lemma}[theorem]{Lemma}
\newtheorem{proposition}[theorem]{Proposition}
\newtheorem{fun-fact}{Fun-Fact}
\newcommand{\mleadsto}{\mapstochar\mathrel{\mspace{0.45mu}}\rightsquigarrow}
\begin{document}

\title{Untangled:\\
A Complete Dynamic Topological Logic}

\author[$\mathsection$]{David Fern\'andez-Duque}
\author[$\dagger$]{Yo\`av Montacute}
\affil[$\mathsection$]{Department of Mathematics, Ghent University} 
\affil[ ]{ICS of the Czech Academy of Sciences} 
\affil[ ]{\texttt{david.fernandezduque@ugent.be}}
\affil[$\dagger$]{Computer Laboratory, University of Cambridge}
\affil[ ]{\texttt {yoav.montacute@cl.cam.ac.uk}}

\maketitle

\begin{abstract}
{\em Dynamic topological logic} ($\mathbf{DTL}$) is a trimodal logic designed for reasoning about {\em dynamic topological systems}. 
It was shown by Fern\'andez-Duque that the natural set of axioms for $\mathbf{DTL}$ is incomplete, but he provided a complete axiomatisation in an extended language.
In this paper, we consider dynamic topological logic over {\em scattered spaces}, which are  topological spaces where every nonempty subspace has an isolated point.
Scattered spaces appear in the context of computational logic as they provide semantics for provability and enjoy definable fixed points.
We exhibit the first sound and complete dynamic topological logic in the original trimodal language.
In particular, we show that the version of $\mathbf{DTL}$ based on the class of scattered spaces is finitely axiomatisable over the original language, and that the natural axiomatisation is  sound and complete.
\end{abstract}

\section{Introduction}

In a nutshell, {\em dynamical systems} are mathematical models of movement in space over time.
The interaction between space and time is a fundamental aspect of reality, making such models ubiquitous in many scientific disciplines, ranging from physics to economics.
Computer science is no exception, which should be unsurprising given the temporal aspect of processes and the deep connections between topology and computation, as demonstrated by abstract models of computation such as the well-known Scott domains \cite{Scott82}.

There are many recent examples from pure and applied work in computer science involving dynamical systems.
Lin and Antsaklis~\cite{hybrid} use hybrid dynamical systems in the research of artificial intelligence and computer-aided verification. 
 Brunton and Kutz \cite{brunton2019data} purposed approaching data-related problems through dynamical systems, and Weinan~\cite{weinan2017proposal} suggested modelling nonlinear functions implemented in machine learning using dynamical systems.
Mortveit and Reidys's~\cite{sequential} sequential dynamical systems generalise cellular automata and provide a framework for studying dynamical processes in graphs.
Dynamical systems are also found in their linear form in the shape of Markov chains, linear recurrence sequences and linear differential equations.  
It is therefore not surprising that connections have been established between dynamical systems and algorithms. 
Such links can be found for example in the work of Hanrot, Pujol and Stehl\'e~\cite{hanrot2011analyzing}, and in the work of Chu~\cite{chu2008linear}.
This list is by no means exhaustive.

The applications above warrant the need for an effective formal reasoning framework about {\em topological dynamics}, i.e.~the action of a (typically continuous) function on a topological space.
Modal logic was first suggested to serve that purpose in the 1990s by Artemov et al.~\cite{arte}, who envisioned dynamic topological logic as a bimodal logic for reasoning about topological dynamics.
They defined the logic $\bf S4C$ and showed that it possesses desirable properties such as a natural axiomatisation and the finite model property.
Kremer and Mints~\cite{kmints} suggested that including a third modality, `henceforth' from linear temporal logic ($\mathbf{LTL}$), would lead to a logic powerful enough to reason about the asymptotic behaviour of dynamical systems, possibly leading to applications in automated theorem proving.
They dubbed the resulting system {\em dynamic topological logic} ($\bf DTL$).
They proposed a natural axiomatisation for $\bf DTL$ and conjectured it to be sound and complete for the class of dynamical systems.

However, the situation turned out to be much more intricate than that of $\bf S4C$.
While completeness for Kremer and Mints' calculus has yet to be shown, Fern\'andez-Duque proposed an extension of $\bf DTL$, denoted ${\bf DTL}^*$, which enriches the original language with topological fixed points known as {\em tangled operators}.
He proved that ${\bf DTL}^*$ has a natural axiomatisation~\cite{dtlaxiom}. Later, Fern\'andez-Duque showed that Kremer and Mints' axiomatic system is incomplete; in fact, $\bf DTL$ is not finitely axiomatisable~\cite{FernandezNonFin}.

At this point the status of the problem of axiomatising $\bf DTL$ in the original trimodal language becomes subtle: 
one must search for a non-finite axiomatisation, which is nevertheless `natural' in some sense.
Chopoghloo and Moniri~\cite{Chopo20} proposed an infinitary axiomatisation for $\bf DTL$; the axioms and rules are fairly standard, but the $\omega$-rule, which has infinitely many premises, is allowed.
Thus proofs are infinite objects, unlike the case for ${\bf DTL}^*$, which has infinitely many axioms but finite individual proofs.
To what extent this solves the problem of axiomatising the original trimodal logic is subject to debate.

In this paper we follow a different route and instead restrict our attention to a specific class of dynamical systems, namely, those based on {\em scattered spaces.}
Scattered spaces are topological spaces where every non-empty subspace has an isolated point.
They have gathered attention lately in the context of computational logic, as they may be used to model provability in formal theories~\cite{abashidze1985}, leading to applications in characterising their provably total computable functions \cite{Beklemishev04}. Modal logic on scattered spaces enjoys definable fixed points~\cite{SambinV82}, connecting it to the topological $\mu$-calculus~\cite{BBFMu}.
The latter is particularly relevant to us, as the expressive power gained by topological fixed points, including the tangled operators of $\mathbf{DTL}^*$, is absent in this setting.
As the logic of scattered spaces is the G\"odel-L\"ob modal logic $\bf GL$, we refer to the dynamic topological logic of scattered spaces as {\em dynamic G\"odel-L\"ob logic} ($\bf DGL$).
Moreover, we base our semantics on the Cantor derivative rather than the topological closure, since the former is known to be more expressive \cite{KS}.

Our goal is to demonstrate that the standard finite axiomatisation of $\bf DGL$ is sound and complete, leading to the first complete trimodal dynamic topological logic, as well as the first such logic combining the Cantor derivative with the infinitary `henceforth' from $\bf LTL$.
By the `standard axioms' we refer to the combination of the well-known axiomatisation of $\bf GL$ with $\bf LTL$ axioms for the tenses and $( \newmoon p \wedge \newmoon \Box p) \to\Box\newmoon p $ -- a variant of the continuity axiom of Artemov et al.~adapted for the Cantor derivative.
The proof of completeness employs various advanced techniques from modal logic, including an application of Kruskal's theorem in the spirit of the work of Gabelaia et al.\ \cite{pml}. 

\paragraph{Outline} Section 2 reviews the required definitions and notation necessary to understand the paper. Section 3 focuses on the axiomatisation of the logic and provides an intuitive sketch of the proof of completeness.
Section 4 introduces quasimodels and their corresponding limit models.
Section 5 constructs the universal state space and the simulation formulas of its elements. 
Section 6 assembles the pieces together to derive the completeness proof via  canonical structures. Finally,
Section 7 provides some final remarks and future research directions.

\section{Preliminaries}

Before recalling the definition of dynamic topological logic, let us review some notions from topology and dynamical systems, including the Cantor derivative in a topological space.

\subsection{Topology}

\begin{definition}[topological space]
	A topological space is a pair $\mathfrak X=(X,\tau)$ where $X$ is a set of points and $\tau\subseteq\wp(X)$ is a subset satisfying the following conditions:
	\begin{enumerate}
		\item $X,\varnothing\in\tau$;
		\item if $U,V\in\tau$ then $U\cap V\in\tau$;
		\item if $\mathcal U\subseteq\tau$ then $\bigcup\mathcal U\in\tau$.
	\end{enumerate}
	The elements of $\tau$ are called open sets and $\tau$ is called a \emph{topology} on $X$. Complements of open sets are called \emph{closed sets}.
	If $x\in U\in \tau$, we say that $U$ is a {\em neighbourhood} of $x$.
\end{definition}

We can view partial orders (posets) of the form $(X,\prec)$ as topological spaces with the downset topologies, where each set of the form ${\downarrow x} := \{y:y\peq x$\}, for some $x\in X$, is a basic open set (as usual, $\peq$ denotes the reflexive closure of $\prec$).
Equivalently, a set $U\subseteq X$ is open iff it is downward closed under $\prec$.
Topologies of this form are \emph{Alexandroff topologies,} which have the property that {\em arbitrary} intersections of open sets are open.
Note that in this paper we represent posets via their {\em strict} ordering, i.e., with a transitive, irreflexive relation $\prec$, since it better accommodates our semantics.

Topological spaces can be viewed as an abstract representation of space.
Indeed, the Euclidean spaces $\mathbb R^n$ are the most standard examples of topological spaces.
Here, open sets are all $U\subseteq \mathbb R^n$ for which every $x\in U$ has $\varepsilon>0$ such that $d(x,y)<\varepsilon$ implies $y\in U$, where $d(x,y)$ denotes the Euclidean distance.

A topology on $X$ allows us to define concepts related to limits.
In particular, $x$ is a limit point of $A\subseteq X$ if every neighbourhood of $x$, with respect to the topology on $X$, contains at least one point $a\in A$ distinct from $x$.
This leads to the notion of the Cantor derivative of a subset of $X$.

\begin{definition}[Cantor derivative]
	Let $\mathfrak X=(X,\tau)$ be a topological space. Given $A\subseteq X$, the Cantor derivative of $A$ is the set $d(A)$ of all limit points of $A$. 
	
	Given subsets $A,B\subseteq X$, the Cantor derivative satisfies the following properties:
	\begin{enumerate}
		\item $d(\varnothing)=\varnothing;$
		\item $d(A\cup B)=d(A)\cup d(B)$;
		\item $dd(A)\subseteq A\cup d(A)$.
	\end{enumerate}
\end{definition}

Note that if $X$ is a topological space and $A\subseteq X$, we do not always have that $A\subseteq d(A)$; elements of $A\setminus d(A)$ are called {\em isolated points} of $A$.
Cantor observed that if we iteratively remove isolated points of $X$, we eventually reach the largest subspace $X_\infty\subseteq X$ without isolated points.
The subspace $X_\infty$ may be empty: spaces with this property are known as scattered spaces.
They can be defined without reference to $X_\infty$ as follows:

\begin{definition}[scattered space]
	A topological space $(X,\tau)$ is \emph{scattered} if for every subset $A\subseteq X$
	$$ S\subseteq d(S) \text{ implies } S=\varnothing.$$
	Equivalently, a topological space is scattered if every nonempty subset has an isolated point. 
\end{definition}

Movement in space over discrete time can be modelled by equipping topological spaces with a transition function, which is assumed to be continuous. Recall that if $(X,\tau_X)$ and $(Y,\tau_Y)$ are topological spaces, then $f\colon X\to Y$ is {\em continuous} if whenever $U\subseteq Y$ is open, then $f^{-1}(U) $ is open.

\begin{definition}[dynamic topological system]
	A \emph{dynamic topological system} is a triple $\mathfrak S=(X,\tau,f)$, where $(X,\tau)$ is a topological space and $f:X\rightarrow X$ is a continuous function.
\end{definition}

In this paper, we will mostly be concerned with dynamic topological systems based on a scattered space (or scattered dynamical systems for short).
It is useful to observe that if $(X,\prec)$ is a poset, then $f\colon X\to X$ is continuous iff $x\peq y$ implies $f(x)\peq f(y)$.
The class of all topological spaces will be denoted by $\topo$ and the class of all dynamical systems by $\ctopo$. In addition, the class of scattered spaces will be denoted by $\sct$ and the class of all scattered dynamical systems by $\csct$.
Our goal is to axiomatise the dynamic topological logic of the systems in $\csct$, as defined in the following subsection.

\subsection{Dynamic topological logic}
We introduce the language with which we will be working with throughout the paper. Given a nonempty set $\mathsf{PV}$ of propositional variables, the language of the logic $\mathbf{DGL}$ is defined recursively as follows:
$$ \varphi::=p\;|\;\varphi\wedge\varphi\;|\;\neg\varphi\;|\;\lozenge\varphi\;|\;\newmoon\varphi\;|\;\blacklozenge\varphi,$$
where $p\in\mathsf{PV}$. It consists of the Boolean connectives $\wedge$ and $\neg$, the temporal modalities `next' $\newmoon$ and `eventually' $\blacklozenge$ with its dual `henceforth' $\blacksquare:=\neg\blacklozenge\neg$, and the spatial modality $\lozenge$ for the Cantor derivative with its dual the co-derivative $\square:=\neg\lozenge\neg$. We define other connectives (e.g.\; $\vee$, $\rightarrow$) in the usual way. 

This language will be denoted from this point onward by $\fulllan$ while the language without the henceforth operator, the language of the logic $\mathbf{GLC}$ (G\"odel-L\"ob logic with Continuity), will be denoted by $\glclan$.

\begin{definition}[semantics]
	A \emph{dynamic topological model} is a tuple $\sfont M=(X,\tau,f,\nu)$, where $(X,\tau,f)$ is a dynamic topological system and $\nu:\mathsf{PV}\rightarrow\wp(X)$ is a {\em valuation function}. 
	Given $\varphi\in\fulllan$, we define the \emph{truth set} $\val\varphi \subseteq X$ of a formula $\varphi$ as follows:
	\setlength\columnsep{-3mm}
	\begin{multicols}{2}
\noindent
	\begin{itemize}
		\item $\val p=\nu(p)$;
		\item $\val{\neg\varphi} = X\backslash \val\varphi $;
		\item $\val{\varphi\wedge\psi}=\val\varphi \cap \val \psi $;
		\item $\val{\lozenge\varphi}=d(\val\varphi)$;
		\item $\val{\newmoon\varphi} =f^{-1}(\val\varphi )$;
		\item $\val{\blacklozenge\varphi} =\bigcup_{n\geq 0}f^{-n}(\val\varphi)$.
	\end{itemize}	
	\end{multicols}
We write $\sfont M,x\models\varphi $ if $x\in \val\varphi$ and $\sfont M\models \varphi$ if $ \val\varphi =X$. We may also denote a specific truth assignment by $\val\cdot_\sfont M$ or $\val\cdot_\mathfrak \nu$ if we deal with more than one possible model or valuation. 
\end{definition}

\section{Axiomatisation}
It was shown by Esakia \cite{esakia} and Simmons \cite{simmons} that the logic $\mathbf{GL}$, whose characteristic axiom is $  \square(\square \varphi \rightarrow \varphi) \rightarrow \square \varphi  $, is the logic of all scattered spaces with respect to the topological semantics where $\lozenge$ is interpreted as the Cantor derivative operation. 
Aside from this change and a modified continuity axiom, our axiomatisation of $\mathbf{DGL}$ is very similar to Kremer and Mints' axiomatisation~\cite{kmints} and consists of the following axiom schemes:
\begin{itemize}
	\item{\rm Taut} $:=\text{All propositional tautologies}$
	\item{\rm K} $:= \dn(\varphi\to\psi)\to(\dn\varphi\to \dn\psi)$
	\item{\rm L} $:=    \square(\square \varphi \rightarrow \varphi) \rightarrow \square \varphi  $
	\item{${\rm Next}_\neg$} $:=\neg\newmoon\varphi\leftrightarrow\newmoon\neg\varphi$
\item{${\rm Next}_\wedge$} $:=\newmoon (\varphi\wedge\psi)\leftrightarrow \newmoon \varphi \wedge\newmoon \psi $
\item{$\rm C$} $:= (\newmoon\varphi\wedge \newmoon\dn\varphi)\to  \dn\newmoon\varphi$
\item{${\rm K}_\blacksquare$} $:=\blacksquare(\varphi\to\psi)\to (\blacksquare\varphi\to\blacksquare\psi)$
\item{${\rm Fix}_\blacksquare$} $:=\blacksquare\varphi\rightarrow(\varphi\wedge\newmoon\blacksquare\varphi)$
\item{${\rm Ind}_\blacksquare$} $:=\blacksquare(\varphi\rightarrow\newmoon\varphi)\rightarrow(\varphi\rightarrow\blacksquare\varphi)$
\end{itemize}
It also has the following inference rules:
\begin{multicols}{2}
	\begin{itemize}
\item{\rm MP} $:= \dfrac{\varphi \ \ \varphi\to \psi}\psi$
\item{${\rm Nec}_\dn$} $:= \dfrac{\varphi }{\dn \varphi}$
\item{${\rm Nec}_{\newmoon}$} $:= \dfrac{\varphi}{\newmoon \varphi}$
\item{${\rm Nec}_\blacksquare$} $:= \dfrac{\varphi}{\blacksquare \varphi}$
\end{itemize}
\end{multicols}
We write $\mathbf{DGL}\vdash\varphi$ or simply $\vdash\varphi$ if $\varphi$ is derivable using these rules and axioms.

Given a dynamic topological system $\mathfrak S=(X,\tau,f)$, the intuition behind the axioms above can be stated briefly as follows:
 the axiom $\rm L$ expresses  transitivity and  well-foundedness \cite{segerberg1971}, and in the case of a topology $\tau$, it expresses that $\tau$ is a scattered space~\cite{esakia}.
\begin{lemma}
A topological space $\mathfrak X=(X,\tau)$ is scattered if and only if $\mathfrak X\models \rm L$ is sound for the class of scattered spaces.
\end{lemma}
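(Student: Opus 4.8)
The plan is to translate validity of the axiom $\rm L$ under the Cantor-derivative semantics into an elementary condition on $d$, and then compare that condition with the definition of scatteredness. Fix an arbitrary valuation and write $A:=\val\varphi$ and $B:=X\setminus A$. Unfolding $\square\psi:=\neg\lozenge\neg\psi$ gives $\val{\square\varphi}=X\setminus d(B)$, and since $X\setminus\val{\square\varphi\to\varphi}=B\setminus d(B)$ is precisely the set of isolated points of $B$, we obtain $\val{\square(\square\varphi\to\varphi)}=X\setminus d\bigl(B\setminus d(B)\bigr)$. Because $d$ is monotone (property 2 of the Cantor derivative), $d(B\setminus d(B))\subseteq d(B)$ holds in any space, so $\mathfrak X\models{\rm L}$ if and only if $d(B)\subseteq d(B\setminus d(B))$ — equivalently $d(B)=d(B\setminus d(B))$ — for every $B\subseteq X$.

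For the direction ``$\mathfrak X\models{\rm L}$ implies $\mathfrak X$ scattered'' I would argue by contraposition. If $\mathfrak X$ is not scattered, pick a nonempty $S$ with $S\subseteq d(S)$, let $\nu(p):=X\setminus S$ for some variable $p$, and instantiate $\rm L$ with $\varphi:=p$, so that $B=S$. Then $B\setminus d(B)=\varnothing$, hence $\val{\square(\square p\to p)}=X$, whereas $\val{\square p}=X\setminus d(S)\ne X$ since $\varnothing\ne S\subseteq d(S)$; thus this instance of $\rm L$ fails at every point of $d(S)$.

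The reverse direction ``$\mathfrak X$ scattered implies $\mathfrak X\models{\rm L}$'' is the heart of the argument. Assume $\mathfrak X$ is scattered and, for contradiction, that $x\in d(B)\setminus d(B\setminus d(B))$ for some $B$. Choose an open $U\ni x$ witnessing $x\notin d(B\setminus d(B))$, i.e.\ $U\cap(B\setminus d(B))\subseteq\{x\}$; note $(U\cap B)\setminus\{x\}$ is nonempty because $x\in d(B)$. Applying scatteredness to this set yields a point $y$ and an open $V\subseteq U$ with $y\in V\cap B$, $y\ne x$, and $V\cap B\subseteq\{x,y\}$. One checks that $y\in d(B)$ — otherwise $y$ would be an isolated point of $B$ lying in $U$ and distinct from $x$, contradicting the choice of $U$ — and this in turn forces $x\in V\cap B$. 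Finally, using $x,y\in d(B)$ together with $V\cap B=\{x,y\}$, I would show that every neighbourhood of $x$ contains $y$ and every neighbourhood of $y$ contains $x$, so that $\{x,y\}\subseteq d(\{x,y\})$, contradicting scatteredness.

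The step I expect to be the main obstacle is precisely this last one: since we allow arbitrary, possibly non-Hausdorff, spaces, $x$ and $y$ need not be topologically separable, and the key idea is to use the neighbourhood $V$ produced above to ``trap'' the pair $\{x,y\}$ and thereby manufacture a two-point self-dense set. The easy half of the equivalence additionally yields at once that $\rm L$ is sound over the class $\sct$ of scattered spaces.
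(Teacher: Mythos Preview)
Your argument is correct, and the $(\Leftarrow)$ direction is essentially the same as the paper's. The $(\Rightarrow)$ direction, however, takes a genuinely different and more circuitous route. The paper works directly with the contrapositive form $\lozenge p\to\lozenge(p\wedge\Box\neg p)$: given $x\in d(\val p)$ and a neighbourhood $U$ of $x$, it simply applies scatteredness to $S=U\cap\val p$ to obtain an isolated point $y$ of $S$; since $U$ is open, $y$ is then isolated in $\val p$ itself, i.e.\ $y\models p\wedge\Box\neg p$, and one checks $y\neq x$. That finishes the proof in one step. Your version applies scatteredness to the same set (minus $x$) to obtain $y$, but then, instead of observing that $y$ already witnesses $B\setminus d(B)$ near $x$, you assume the conclusion fails and leverage this to trap a two-point dense-in-itself set $\{x,y\}$ inside $V$. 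This is valid and the final ``trapping'' step is a nice use of the non-Hausdorff setting, but it is extra work: the very hypothesis $U\cap(B\setminus d(B))\subseteq\{x\}$ that you use to force $y\in d(B)$ is exactly what a direct argument contradicts the moment $y$ is produced. What your reformulation $d(B)=d(B\setminus d(B))$ does buy is a clean set-theoretic characterisation of $\rm L$-validity that makes both directions transparent; the paper's version is less explicit about this but gets to the conclusion faster.
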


\begin{proof}
We will use the contrapositive of $\rm L$, $\lozenge p\to \lozenge(p\wedge\Box\neg p)$, for convenience. 
\medskip

\noindent $(\Rightarrow)$ Suppose that $\mathfrak X$ is scattered and fix a valuation $\nu$ on $X$.
Suppose that $\mathfrak X,x\models \lozenge p$. Then in each open set $U$ of $x$ there exists a point $y\neq x$ such that $\mathfrak X,y\models p$. 
Clearly, for every valuation $\nu$ on $\mathfrak X$ and an open set $U$ of $x$, $S=U\cap\val p_\nu \neq\varnothing$. 
Since $\mathfrak X$ is scattered, $S$ contains a point $y$ isolated in $S$. 
In particular, there exists an open set $V$ of $y$ for which $V \cap S=\{y\}$. 
Since $y\in\val p_\nu$ and $V\setminus{y}\cap\val p_\nu=\varnothing$, $\mathfrak X,x\models \lozenge (\square\neg p\wedge p)$, as required.
\medskip

\noindent
$(\Leftarrow)$ Suppose that $\mathfrak X=(X,\tau)$ is not scattered. 
Then there exists a non-empty subset $S\subseteq X$ without an isolated point in $S$. 
We define a valuation $\nu$ on $\mathfrak X$ such that $\val p_\nu=S$.
Note that for every $y\in S$ and every neighbourhood $U$ of $y$ we have $\{y\}\subsetneq S\cap U$, hence $\mathfrak X,y\models \lozenge p$.
Since $S\neq\varnothing$, there is at least one $y_0\in X$ such that $\mathfrak X,y_0\models \lozenge p$.
In addition $X\setminus S=\val{\neg p}_\nu$.
Since $X$ is open, $\mathfrak X,y_0\models \square(\lozenge p\vee \neg p)$, witnessing that the $\rm L$ axiom fails on $y_0$.
\end{proof} 

The two operators $\rm Next_\neg$ and $\rm Next_\wedge$ express the functionality of the map $f:X\rightarrow X$, and the axiom $\rm C$ expresses that $f$ is continuous. 
Finally, the two axioms {${\rm Fix}_\blacksquare$} and {${\rm Ind}_\blacksquare$} express the properties of fixed-point and successor induction of $\blacksquare$, which dictate the behaviour of the `henceforth' operation. 
Each of these axioms is proven sound in either \cite{kmints} or \cite{D-FM21}, yielding the following:

\begin{proposition}[soundness]
	The axiomatisation above is sound for the class of scattered dynamical systems. 
\end{proposition}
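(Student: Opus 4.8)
The plan is to verify that every axiom scheme is valid on an arbitrary scattered dynamical model $\sfont M = (X,\tau,f,\nu)$, and that every inference rule preserves validity; soundness then follows by induction on the length of derivations. Since the proposition cites \cite{kmints} and \cite{D-FM21} for the individual soundness arguments, the work here is mainly to identify which classical facts each axiom reduces to and to handle the one axiom ($\rm L$) and the one modality ($\lozenge$ as Cantor derivative) that differ from the $\bf DTL$ setting. I would organise the verification in three blocks.

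First, the purely temporal part. The axioms $\rm Next_\neg$ and $\rm Next_\wedge$ are immediate from $\val{\newmoon\varphi} = f^{-1}(\val\varphi)$ together with the fact that preimage commutes with complement and intersection (this is exactly where functionality of $f$ — being a total single-valued map — is used). The axioms $\rm K_\blacksquare$, $\rm Fix_\blacksquare$ and $\rm Ind_\blacksquare$ together with $\rm Nec_\blacksquare$ are the standard $\bf LTL$ principles for `henceforth'; they follow from $\val{\blacklozenge\varphi} = \bigcup_{n\ge 0} f^{-n}(\val\varphi)$, hence $\val{\blacksquare\varphi} = \bigcap_{n\ge 0} f^{-n}(\val\varphi)$, by an elementary argument on natural numbers (for $\rm Ind_\blacksquare$ one argues that if $x \in \val{\blacksquare(\varphi\to\newmoon\varphi)}$ and $x \in \val\varphi$, then $f^n(x) \in \val\varphi$ for all $n$ by induction on $n$). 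These are exactly the verifications in \cite{kmints}, unchanged by the move to scattered spaces.

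Second, the spatial part. The axioms $\rm K$ and the rule $\rm Nec_\dn$ hold on any topological space because $d$ satisfies $d(A\cup B) = d(A)\cup d(B)$ (monotonicity) and $d(X) \subseteq X$; concretely $\square$ is a normal box operator since $d$ is additive. The key axiom is $\rm L$: here I would invoke the lemma already proved in the excerpt, which states precisely that $\mathfrak X \models \rm L$ iff $\mathfrak X$ is scattered. Since $\sfont M$ is based on a scattered space by hypothesis, $\rm L$ is valid. (I would note in passing that the lemma's statement as typeset is slightly garbled — the content we need is the equivalence "$\mathfrak X$ scattered $\iff \mathfrak X \models \rm L$" — but the proof given establishes exactly that equivalence, so it is safe to cite.)

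Third, the continuity axiom $\rm C := (\newmoon\varphi \wedge \newmoon\dn\varphi) \to \dn\newmoon\varphi$, which in derivative form is the analogue of the Artemov et al.\ continuity axiom adapted to $d$; this is the only genuinely new computation and is the step I expect to be the main obstacle. Suppose $x \in \val{\newmoon\varphi}\cap\val{\newmoon\dn\varphi}$, i.e.\ $f(x) \in \val\varphi$ and $f(x) \in d(\val\varphi)$. To show $x \in d(f^{-1}(\val\varphi))$, take any open neighbourhood $U$ of $x$; by continuity $f[U]$ is contained in an open neighbourhood of $f(x)$, and more usefully $f^{-1}(V)$ is an open neighbourhood of $x$ for every open neighbourhood $V$ of $f(x)$. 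Since $f(x) \in d(\val\varphi)$, the open set $V := f[U]^{\circ}$ — or rather: work with $W := U \cap f^{-1}(V)$ for a suitable open $V \ni f(x)$ — meets $\val\varphi$ in a point $y \ne f(x)$. One then pulls $y$ back: since $f(x) \in \val\varphi$ as well, and $y \ne f(x)$, any preimage of $y$ inside $U$ lies in $f^{-1}(\val\varphi)$ and is distinct from $x$. The care needed is to guarantee such a preimage exists inside $U$ and is $\ne x$; this is exactly arranged by choosing the witnessing point of $d(\val\varphi)$ inside $f[U]$, using that $f[U]$, while not open, is contained in no smaller constraint — the clean way is: for open $U \ni x$, the set $f^{-1}\!\big(d(\val\varphi) \cap \text{(open nbhd of }f(x))\big) \cap U$ is a nonempty open set on which we can locate a point mapping into $\val\varphi$ away from $f(x)$, hence into $f^{-1}(\val\varphi)$ away from $x$. (This is the verification carried out in \cite{D-FM21}; I would reproduce its two-line form.) Finally, $\rm MP$ and $\rm Nec_\newmoon$ preserve validity trivially: if $\val\varphi = X$ and $\val{\varphi\to\psi} = X$ then $\val\psi = X$, and if $\val\varphi = X$ then $f^{-1}(\val\varphi) = X$ so $\val{\newmoon\varphi} = X$. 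This completes all cases, and the induction on derivation length gives the proposition.
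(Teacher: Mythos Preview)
Your overall plan---verify each axiom and rule on an arbitrary scattered dynamical model and induct on derivations---is exactly right, and is what the paper does (by deferring to \cite{kmints} and \cite{D-FM21}). The temporal block and the treatment of $\rm K$ and $\rm L$ are fine. However, your verification of the continuity axiom $\rm C$ contains a genuine error, not just a rough edge.

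You have misread the modality: in the paper $\dn$ abbreviates $\square$, the \emph{co}-derivative, not $\lozenge$. So $\rm C$ reads $(\newmoon\varphi\wedge\newmoon\square\varphi)\to\square\newmoon\varphi$, and semantically the hypothesis is ``$f(x)\in\val\varphi$ and $f(x)\in\val{\square\varphi}$'', not ``$f(x)\in d(\val\varphi)$''. The statement you attempt to prove, namely that $f(x)\in\val\varphi\cap d(\val\varphi)$ implies $x\in d(f^{-1}(\val\varphi))$, is in fact \emph{false} on scattered spaces: take $X=\omega+1$ with the order topology, the constant map $f\equiv\omega$, and $\val\varphi=X$; then every $x$ satisfies the antecedent but no isolated natural number lies in $d(f^{-1}(\val\varphi))=d(X)=\{\omega\}$. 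This explains why your argument keeps reaching for preimages inside $U$ that continuity cannot supply.

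The correct verification is a one-liner once the axiom is read properly. From $f(x)\in\val\varphi$ and $f(x)\in\val{\square\varphi}$ one obtains an open $V\ni f(x)$ with $V\subseteq\val\varphi$ (the first conjunct covers the point $f(x)$ itself, the second covers $V\setminus\{f(x)\}$). By continuity $U:=f^{-1}(V)$ is open, contains $x$, and satisfies $U\subseteq f^{-1}(\val\varphi)=\val{\newmoon\varphi}$; in particular $U\setminus\{x\}\subseteq\val{\newmoon\varphi}$, so $x\in\val{\square\newmoon\varphi}$. A minor side remark: your justification for ${\rm Nec}_\square$ should invoke $d(\varnothing)=\varnothing$ rather than $d(X)\subseteq X$, since $\val{\square\varphi}=X\setminus d(X\setminus\val\varphi)$.
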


 The logic $\mathbf{GLC}$ is the same as $\mathbf{DGL}$, but as its language lacks the `henceforth' operator the corresponding axioms are omitted.
When a formula $\varphi$ is derivable in $\mathbf{GLC}$ we may write $\mathbf{GLC}\vdash\varphi$, although as mentioned $\vdash$ without a specified logic refers to derivability in $\mathbf{DGL}$.
 Nevertheless, our proof of completeness will use the following result by Fern\'andez-Duque and Montacute \cite{D-FM21}.
 
\begin{theorem}[$\mathbf{GLC}$ completeness]\label{glcomp}
The logic $\mathbf{GLC}$ is complete and has the finite model property with respect to the class of scattered dynamical systems.
\end{theorem}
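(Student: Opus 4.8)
The plan is to derive both completeness and the finite model property from the single statement that \emph{every $\mathbf{GLC}$-consistent formula $\varphi\in\glclan$ is satisfied in some finite scattered dynamical system}; since soundness is already in hand, this suffices. I would produce such a model in two stages: first extract a finite combinatorial skeleton from the canonical model, and then realise it as a genuine dynamic topological system on a finite scattered space.

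For the first stage, fix $\varphi$ with $\mathbf{GLC}\not\vdash\neg\varphi$ and let $\Sigma$ be the closure of $\{\varphi\}$ under subformulas and single negations. In the canonical model of $\mathbf{GLC}$, the axioms ${\rm Next}_\neg$ and ${\rm Next}_\wedge$ together with ${\rm Nec}_\newmoon$ applied to a tautology (giving $\newmoon\top$) force $\newmoon$ to be interpreted by a total function $f^c$; the axiom ${\rm L}$ entails transitivity of the derivative relation $R^c$; and ${\rm C}$ yields that $f^c$ is continuous, i.e.\ $v\prec^c w$ implies $f^c(v)\peq^c f^c(w)$. I would then carry out a Segerberg/Boolos-style selective filtration through $\Sigma$: the states are the $\Sigma$-types realised in the canonical model, the derivative relation between types carries the usual extra clause (strictly growing $\square$-content) that simultaneously forces irreflexivity and, by finiteness, converse well-foundedness, and the $\newmoon$-relation descends to a serial relation $\rightsquigarrow$. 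The resulting finite structure satisfies the coherence conditions defining a \emph{quasimodel} for $\varphi$ — the Boolean, modal and `next' conditions on the types, together with a continuity condition on $\rightsquigarrow$ distilled from ${\rm C}$ — and a truth lemma for $\Sigma$-formulas shows that $\varphi$ holds at the type of the falsifying world. The only obstruction to this being a genuine model is that $\rightsquigarrow$ need not be functional.

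For the second stage I would unravel the finite quasimodel $\mathcal Q$ into a model whose points are the \emph{runs} $\rho=(\rho(0),\rho(1),\dots)$ through $\rightsquigarrow$, with $f$ the shift $\rho\mapsto(\rho(i+1))_{i\geq 0}$ and with $\rho\prec\rho'$ obtained by propagating the quasimodel order along the coordinates. Since $\glclan$ has no `henceforth' operator, there is no mechanism forcing arbitrarily long distinct futures, so one can show it suffices to retain the finitely many ultimately periodic runs whose preperiod and period are suitably bounded in terms of $|\mathcal Q|$; this keeps the model finite, and the shift maps this set into itself. One then checks that the induced order is a finite strict partial order — hence its downset topology is a scattered space and the Cantor-derivative reading of $\lozenge$ coincides with the Kripke one — that the shift is order-preserving and therefore continuous (this is where the continuity condition extracted from ${\rm C}$ is spent), and that a final truth lemma carries satisfaction of $\varphi$ from $\mathcal Q$ to the run issuing from the distinguished state. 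Endowing this finite strict poset with its downset topology then gives the desired finite scattered dynamical system, and completeness together with the finite model property follow.

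I expect the second stage to be the main obstacle: the order on runs must be chosen so that, all at once, it is strict, irreflexive and well-founded (so the Alexandroff space is scattered and no spurious limit points appear between copies), the shift is continuous, and the runs lying above a given one realise exactly the $\lozenge$-successor types of its initial coordinate (needed for the modal clause of the truth lemma). Reconciling these relies on a confluence property of $\rightsquigarrow$ — if $\rho(0)\rightsquigarrow\rho(1)$ and $\rho(0)\prec^{qm}t$, then $t$ has a $\rightsquigarrow$-successor $\peq^{qm}$-above $\rho(1)$ — which is precisely what axiom ${\rm C}$ supplies, and the ${\rm GL}$-style irreflexivity device of the first stage has to be arranged so as to survive the unravelling. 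The remaining ingredients — the canonical model construction, the Boolean and `next' cases of the truth lemmas, and the correspondence between finite strict posets and finite scattered spaces under the Cantor derivative — are routine, and I would dispatch them briefly, relying on standard ${\rm GL}$ completeness theory in the spirit of \cite{esakia,simmons}.
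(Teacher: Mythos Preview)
The paper does not contain its own proof of this theorem: it is stated as a prior result of Fern\'andez-Duque and Montacute~\cite{D-FM21} and invoked as a black box (most prominently in the proof of Lemma~\ref{simone}, where the completeness and finite model property of $\mathbf{GLC}$ are used to certify that the simulation formulas are derivable). There is therefore no in-paper argument against which to compare your proposal.

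That said, your sketch is broadly in the same spirit as the machinery the paper develops for the full logic $\mathbf{DGL}$ in Sections~\ref{limitmodelsfrom}--\ref{canquas}: build a finite labelled poset with a non-deterministic, confluent successor relation (a quasimodel), then unravel it into paths equipped with a suitable topology and the shift map. The main simplification you exploit --- that in the absence of $\blacklozenge$ there is no ``realising'' condition on paths, so ultimately periodic runs of bounded preperiod and period suffice, keeping the model finite --- is the natural one, and it is indeed the reason the $\mathbf{GLC}$ case is easier. Your identification of the delicate point is also correct: the order on runs must simultaneously be a strict, well-founded partial order (so the downset topology is scattered and $\lozenge$ reads as the Cantor derivative), make the shift continuous, and supply enough predecessors for the $\lozenge$-clause of the truth lemma; the confluence extracted from axiom~${\rm C}$ is exactly what is needed here. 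Without access to~\cite{D-FM21} one cannot say whether your filtration-then-unravel route matches theirs in detail, but it is a plausible line and nothing in it is obviously wrong.
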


\noindent In particular, note that every validity in $\mathbf{GLC}$ is syntactically derivable. 
This will become very useful in our proof of completeness for $\mathbf{DGL}$, which can be stated as follows:




\begin{theorem}[completeness]\label{dglcomp}
	$\csct\models\varphi$ implies $\vdash\varphi$, i.e\ all formulas valid on the class of scattered dynamical system are syntactically derivable in $\mathbf{DGL}$.
\end{theorem}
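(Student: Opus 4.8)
The plan is to prove completeness via a contrapositive argument using the machinery of quasimodels and canonical structures outlined in the paper's later sections. Suppose $\not\vdash\varphi$; I want to build a scattered dynamical model refuting $\varphi$. The main vehicle will be a \emph{quasimodel} -- a finite combinatorial structure (a Kripke-style frame over a $\Box$-transitive, converse-well-founded order, decorated with a "next" relation and consistent sets of subformulas as labels) that serves as a finitary surrogate for a genuine topological model. First I would fix the finite set $\Sigma$ of subformulas of $\varphi$ (closed under single negation, plus whatever closure conditions the paper needs, e.g.\ under $\newmoon$ applied to $\blacksquare$-formulas via the ${\rm Fix}_\blacksquare$ axiom), and use the consistency of $\neg\varphi$ together with a Lindenbaum-type construction to produce an initial $\Sigma$-type. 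The key structural input is Theorem~\ref{glcomp}: $\mathbf{GLC}$ is complete \emph{and} has the finite model property over $\csct$, so the "spatial slices" of the quasimodel -- the parts not involving $\blacksquare$ -- can always be realised by finite scattered dynamical systems, and every $\mathbf{GLC}$-validity is available syntactically.

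The heart of the argument, carried out in Sections 4--6, is to (i) show that a consistent formula yields a finite quasimodel; (ii) show that every finite quasimodel unwinds into, or embeds into, a genuine \emph{limit model} on a scattered space, so that the quasimodel's labels agree with actual truth values; and (iii) assemble these via a \emph{universal state space} together with \emph{simulation formulas} that syntactically pin down, inside $\mathbf{DGL}$, which state of the universal space a given point "looks like." Concretely, I would build the universal space out of all possible finite coherent local configurations (moments/states), topologise it so that it is scattered (using an ordinal-rank stratification so that $\Box$ behaves correctly and the space has isolated points everywhere), and define $f$ on it so that continuity -- which by the poset criterion in Section 2 just means $x\peq y \Rightarrow f(x)\peq f(y)$ -- holds; the continuity axiom $\rm C$ is exactly what guarantees the "next" relation of a quasimodel can be arranged to cohere with a continuous $f$. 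Then a back-and-forth / truth-lemma argument shows $\val\psi$ in this canonical model matches membership of $\psi$ in the types, for all $\psi\in\Sigma$, and in particular $\neg\varphi$ is satisfied somewhere.

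The step I expect to be the main obstacle is handling the interaction of the infinitary `henceforth' $\blacksquare$ with the topology: a finite quasimodel only records finitely many types, so the $\newmoon$-orbit of a point must eventually cycle, and one must verify that $\blacksquare\psi$ holds at a point exactly when $\psi$ holds all along the (eventually periodic) forward orbit, \emph{without} the periodicity corrupting scatteredness or continuity. This is where a well-quasi-ordering argument in the style of Gabelaia et al.\ \cite{pml} enters -- applying Kruskal's theorem to trees of local configurations to bound the combinatorial complexity and extract a finite quasimodel from an \emph{a priori} infinite one, ensuring the construction terminates. A secondary delicate point is the ${\rm Ind}_\blacksquare$ direction of the truth lemma: showing that if $\psi$ persists along the orbit then the type actually \emph{contains} $\blacksquare\psi$, which requires the induction axiom together with the fact that $\Sigma$-types are maximal consistent relative to $\Sigma$. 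Once these are in place, soundness (the preceding Proposition) gives the converse direction and the theorem follows.
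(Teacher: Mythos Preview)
Your high-level architecture is right (consistent formula $\Rightarrow$ canonical quasimodel $\Rightarrow$ limit model $\Rightarrow$ scattered dynamical system), and you correctly identify the roles of $\mathbf{GLC}$-completeness, simulation formulas, and Kruskal's theorem. But several key structural features of the construction are misdescribed, and at least two of these would make your proposed argument fail as written.

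First, quasimodels here are \emph{not} finite, and their transition is a non-deterministic \emph{relation}, not a function. The canonical structure $\cqm_\Sigma$ has as points all consistent $\Sigma$-states, and there are infinitely many of these (states can have arbitrarily large height and width). So your picture of a finite quasimodel in which ``the $\newmoon$-orbit of a point must eventually cycle'' does not match what is built: there is no orbit, only a branching relation $\mapsto_{\cqm_\Sigma}$, and no periodicity argument is available. The genuine scattered dynamical model is not the quasimodel itself but its \emph{limit model} $\vec\cqm_\Sigma$: the points are infinite \emph{realising paths} $(\ww_n)_{n<\omega}$ through the relation, the function is the shift $\sigma$, and the topology is generated by the $m$-neighbourhoods $N_m(\vec w)$ of Section~\ref{secLimMod}. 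Scatteredness and continuity are proved for this limit model (Lemmas~\ref{lemmScattered} and~\ref{contmap}), not for the quasimodel directly; the ``$x\peq y\Rightarrow f(x)\peq f(y)$'' criterion you cite does not apply because $\vec\cqm_\Sigma$ is not an Alexandroff space.

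Second, Kruskal's theorem is not used to ``extract a finite quasimodel from an a priori infinite one.'' Its role is much more local: it shows (Proposition~\ref{fineff}) that from any fixed state $\ww$ there are only finitely many \emph{efficient} paths, so the set $\varrho(\ww)$ of efficiently reachable states is finite. This finiteness is what makes the disjunction $\bigvee_{\vv\in\varrho(\ww)}\Sim(\vv)$ a legitimate formula, to which one can apply ${\rm Ind}_\blacksquare$ in Lemma~\ref{omegasens} and conclude $\omega$-sensibility of $\mapsto_{\cqm_\Sigma}$. So ${\rm Ind}_\blacksquare$ is deployed \emph{syntactically} to rule out inconsistent states, not in a semantic truth-lemma; the truth lemma itself (Lemma~\ref{trucon}) is proved for the limit model and uses the realising-path condition, not periodicity. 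Your proposal would need to replace the cycling/finiteness narrative with this realising-path and efficient-reachability machinery to go through.
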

The rest of the paper is devoted to this result.
It involves several elements, so it will be useful to sketch their role in the proof.

The general idea is to adapt a proof of completeness for linear temporal logic (see e.g.~\cite{temporal}).
For readers familiar with completeness proofs of $\mathbf{LTL}$, we recall two standard approaches.
The first is to construct the (infinite) canonical model and then perform filtration to obtain a finite model.
Filtration is needed since in the canonical model the relation used for interpreting $\blacklozenge$ is not necessarily the real transitive, reflexive closure of the successor function.
While this property {\em does} hold in the filtrated model (obtained by taking a suitable quotient), the drawback is that after filtration, the modality $\newmoon$ is no longer interpreted via a function.
We remedy this by `unwinding'; that is, choosing a path $[w_0],[w_1],\ldots$ of elements of the filtrated model, where $[w]$ denotes the equivalence class of $w$.
In the terminology of the present paper, such a path is a {\em realising path.}

This approach does not work in our setting since filtration destroys the continuity condition (which {\em does} hold in the canonical model of $\mathbf{DGL}$).
Instead, we follow something closer to the second approach, where we begin with a structure that looks like the final filtrated model, but might include `too many' points.
To this end, fix a finite set $\Sigma$ closed under subformulas and single negations (typically, the subformulas of some `target formula' $\varphi$).
In the $\mathbf{LTL}$ setting, a `point' of this model would be a {\em type} $\Theta$, i.e.~a subset of $\Sigma$ respecting Booleans: in particular, $\neg \psi \in \Theta$ iff $ \psi\notin \Theta$, for $\psi\in\Sigma$.
Other conditions may be imposed on types, e.g.~$\blacksquare \psi\in \Theta $ implies $\psi\in \Theta$.
Let $T_\Sigma$ denote the set of all $\Sigma$-types.
Using the truth conditions of the tenses $\newmoon$ and $\blacklozenge$, we may define a `successor relation' $S_\Sigma$ on the set of $\Sigma$-types, so that for example if $\Theta \mathrel S_\Sigma \Delta$ and $\newmoon\psi\in \Theta$, then $\psi\in \Delta$.
As was the case with the canonical model,  $\blacklozenge\psi\in \Theta$ does not necessarily imply that there is $n$ and $\Delta$ such that $\Theta\mathrel S^n_\Sigma \Delta$ and $\psi\in \Delta$.
But in this case, rather than a quotient, we should take a {\em subset} of $T_\Sigma$. 
Say that a type $\Theta$ is {\em consistent} if $\chi(\Theta) := \bigwedge \Theta$ is consistent with respect to the axioms and rules of $\mathbf{LTL}$.
Let $\cqm_\Sigma$ be the restriction of $(T_\Sigma,S_\Sigma)$ to the set of consistent types.
Then, much as was the case for the filtrated canonical model, $\cqm_\Sigma$ {\em does} interpret $\blacklozenge$ correctly, but $S_\Sigma$ is not functional.
As before, we obtain a proper $\mathbf{LTL}$ model by choosing a realising path on $\cqm_\Sigma$.

Our proof of completeness of $\mathbf{DGL} $ {\em grosso modo} follows this second proof sketch.
The biggest change is that types must be replaced by more complex objects.
Conceptually, we may think of types as describing the state of affairs (relative to $\Sigma$) at a given moment in time.
However, in the setting of dynamical systems, this involves not only stating which propositions hold, but also describing the `local' topological structure.
As the purely topological fragment of $\mathbf{DGL}$ is just $\mathbf{GL}$ (i.e., the logic of scattered spaces), and $\mathbf{GL}$ is sound and complete for finite (strict) posets, we will let $\Sigma$-states be finite posets labelled by types: 
formally, a {\em $\Sigma$-state} is a structure $\mathfrak w=(|\mathfrak w|,\prec_\mathfrak w,\ell_\mathfrak w,0_\mathfrak w)$, where  $(|\mathfrak w|,\prec_\mathfrak w)$ is a strict, finite poset with a root $0_\mathfrak w$, and $\ell_\mathfrak w$ assigns a $\Sigma$-type $\ell_\mathfrak w(w)$ to each $w\in |\mathfrak w|$, satisfying some constraints to mimic the semantics of $\lozenge$.

The set of all $\Sigma$-states forms a structure which we denote $\univ$, and plays the role of $(T_\Sigma, S_\Sigma)$ in the $\mathbf{LTL}$ completeness proof.
This structure will be defined in Section \ref{secSimStat}.
As was the case in the $\mathbf{LTL}$ proof, $\univ$ contains `too many' points, and so we must eliminate those $\Sigma$-states that are `inconsistent'.
This involves describing a $\Sigma$-state $\mathfrak w$ within our formal language.
It is well known that finite frames can be described up to bisimulation in the modal language, but as it turns out, we need to describe states up to {\em simulation} (rather than bisimulation).
Simulation formulas are built in Section \ref{secSimFor}.
The simulation formula for $\mathfrak w$ is denoted $\Sim(\mathfrak w) $ and plays the role of $\chi(\Theta)$ in the $\mathbf{LTL}$ completeness proof.

With this, in Section \ref{canquas} we define $\cqm_\Sigma$, the restriction  of $\univ$ to the set of consistent $\Sigma$-states, i.e.~those $\Sigma$-states $\mathfrak w$ such that $\Sim(\mathfrak w)$ is consistent with our axiomatisation of $\mathbf{DGL}$.
The structure $\cqm_\Sigma$ {\em does} satisfy the required properties to ensure satisfiability.
To be precise, $\cqm_\Sigma$ is a {\em quasimodel,} a labelled strict poset which, aside from having a non-deterministic transition relation rather than a function, respects all semantic clauses of $\fulllan$.
Quasimodels are quite general, with $\cqm_\Sigma$ being only a special case, and as such they are defined much earlier, in Section \ref{secQuasi}.
As it was in the case of $\mathbf{LTL}$, a proper model may be obtained by extracting realising paths from any quasimodel $\qm$.
The major difference in our setting is that now we must simultaneously consider {\em all} realising paths, as they form a dynamic topological system, called the {\em limit model} of $\qm$ and denoted $\vec{\qm}$.
By defining the topology of $\vec{\qm}$ in the right way, we in fact obtain a scattered dynamical system satisfying all formulas that were already satisfied in $\qm$.

As a final remark, note that contrary to the $\mathbf{LTL}$ setting, the structure $\cqm_\Sigma$ for $\mathbf{DGL}$ is not finite.
As we often need to consider disjunctions or conjunctions of formulas of the form $\Sim(\mathfrak w)$, and formulas are finite objects, this is a delicate issue when adapting the $\mathbf{LTL}$ proof.
Fortunately, at each point in the proof, we may restrict our attention to finite sets of $\Sigma$-states: this is a deep fact that relies on an application of Kruskal's theorem pioneered by Gabelaia et al.~\cite{pml}.
This will come into play in Section \ref{sectCanQuasSig}, where we show that $\cqm_\Sigma$ indeed respects the semantics of $\blacklozenge$.

As the treatment of quasimodels and their associated limit models does not depend on the construction of $\univ$ and $\cqm_\Sigma$, we postpone it until later in the paper and first focus our attention on a general treatment of quasimodels.

\section{Quasimodels and limit models}\label{limitmodelsfrom}



In this section, we introduce quasimodels, which are similar to scattered dynamical systems based on an Aleksandroff space (represented as the downset topology induced by a strict partial order). 
The only difference is that the transition function of quasimodels is replaced with a non-deterministic relation.
These structures will be useful in our completeness proof, as quasimodels are easier to construct than proper models.
In order to maintain the validity of expressions such as $\newmoon (p\vee q)\leftrightarrow \newmoon p\vee\newmoon q $, we equip each quasimodel $\qm$ with a labelling function $\ell_\qm$ that assigns a {\em type} to each point.
In the main result of this section we show that for every quasimodel $\qm $, the limit model $\vec{\qm} $ is a dynamic topological model satisfying every formula satisfied by $\qm$, i.e.~those formulas in the range of $\ell_\qm$.

\subsection{Quasimodels}\label{secQuasi}
Given a formula $\varphi$, we denote by $\sub(\varphi)$ the set of subformulas of $\varphi$, and we define $\sub_\pm(\varphi)=\sub(\varphi)\cup\{\neg\psi: \psi\in\sub(\varphi)\}$.

\begin{definition}[type]\label{sigtype}
A set $\Phi\subseteq \fulllan $ is a \emph{type} if the following conditions are satisfied:
\begin{enumerate}

	\item There is no formula $\varphi$ such that $\varphi\in \Phi$ and $\neg\varphi\in\Phi$;
	
	\item if $\neg\neg\varphi\in \Phi$ then $\varphi\in \Phi$;

	\item if $\varphi\wedge\psi\in\Phi$ then $\varphi,\psi\in\Phi$;
	
	\item if $\neg(\varphi\wedge\psi)\in\Phi$ then $\neg\varphi\in \Phi$ or $\neg\psi\in\Phi$;

	\item if $\blacksquare\varphi\in\Phi$ then $\varphi\in\Phi$.
\end{enumerate}
The set of all types is denoted by $\mathbb T$.
If $\Sigma$ is a set of formulas closed under subformulas and single negations, we say that $\Phi$ is a {\em $\Sigma$-type} if $\Phi\subseteq \Sigma $ and, for every $\varphi\in\Sigma$, either $\varphi\in \Phi$ or $\neg\varphi\in \Phi$ (identifying $\varphi$ with its double negation as needed).
We denote by $\mathbb{T}_\Sigma$ the set of all $\Sigma$-types.
Often we will assume that $\Sigma$ is finite: when $\Sigma\subseteq\fulllan$ is finite and closed under subformulas and single negations, we write $\Sigma\Subset \fulllan$.
\end{definition}


\begin{definition}[labelled poset] A {\em labelled poset} is a triple $\sfont A=(|\sfont A|,\prec _\sfont A,\ell_\sfont A)$, where $|\sfont A|$ is a set of points, $\prec_\sfont A$ is a strict partial order on $|\sfont A|$, and $\ell_\sfont A:|\mathfrak{A}|\rightarrow \mathbb T_\Sigma$ is a labelling function such that
\begin{itemize}

\item $ \lozenge \varphi\in \ell_{\sfont A}(w)$ implies $\exists v(v\prec w \mathrel\& \varphi\in \ell_{\sfont A}(v)).$

\item $ \square \varphi\in \ell_{\sfont A}(w)$ implies $\forall v(v\prec w \Rightarrow\varphi\in \ell_{\sfont A}(v)).$

\end{itemize}
If $\Sigma$ is a set of formulas and the range of $\ell_\sfont A$ is contained in $\mathbb T_\Sigma$, we say that $\sfont A$ is a {\em$\Sigma$-labelled poset.}
\end{definition}

For our purposes, a continuous relation on a topological space is a relation for which the preimage of every open set is open.
In the context of posets, a relation $S\subseteq|\sfont A|\times|\sfont B|$ between orders $\sfont A$ and $\sfont B$ is called \emph{continuous} if it satisfies the forward-confluence property, i.e.\ if $w\peq_{\sfont A} w'$ and $wSv$, then there is $v'$ such that $w'Sv'$ and $v\peq_{\sfont B} v'$. 
This corresponds to the topological notion of continuity with respect to the downset topologies induced by $\prec_\sfont A$ and $\prec_\sfont B$. 
\begin{definition}[sensibility] Suppose that $\Phi,\Psi\in\mathbb{T} $. 
The ordered pair $(\Phi,\Psi)$ is \emph{sensible} if
\begin{enumerate}
	\item $\newmoon\varphi\in\Phi$ implies that $\varphi\in\Psi$ and $\neg\newmoon\varphi\in\Phi$ implies that $\neg\varphi\in \Psi$;
	\item $\blacklozenge\varphi\in\Phi$ implies that $\varphi\in\Phi$ or $\blacklozenge\varphi\in\Psi$;
	\item $\blacksquare\varphi\in\Phi$ implies that $\blacksquare\varphi\in\Psi$.
\end{enumerate}
	Accordingly, a pair of points $(w,v)$ in a labelled poset $\VV$ is \emph{sensible} if $(\ell(w),\ell(v))$ is sensible. 
	A continuous relation $S\subseteq |\VV|\times|\VV|$ is sensible if every pair in $S$ is sensible. Moreover, $S$ is $\omega$-\emph{sensible} if it is serial and whenever $\blacklozenge\varphi\in\ell(w)$, there is $n\geq 0$ and there is a point $v$ such that $w S^n v$ and $\varphi\in\ell(v)$.
	\end{definition}
	We now have everything we need in order to provide the definition of a \emph{quasimodel}.
	Below, a poset $(W,\prec)$ is {\em locally finite} if ${\downarrow }w$ is finite for all $w\in W$.

\begin{definition}[quasimodel] 
	A \emph{weak quasimodel} is a tuple $\qm =(|\qm|,\prec_\qm,\ell_\qm,S_\qm)$, where  $(|\qm|,\prec_\qm,\ell_\qm)$ is a locally finite labelled poset and $S_\qm\subseteq |\qm|\times |\qm|$ is a sensible relation.
	 If in addition $S_\qm$ is $\omega$-sensible, then $\qm$ is said to be a {\em quasimodel,} and if the range of $\ell_\qm$ is contained in $\mathbb T_\Sigma$, we say that $\qm$ is a {$\Sigma$-quasimodel} (or {\em weak $\Sigma$-quasimodel,} if $\qm$ is not $\omega$-sensible).
\end{definition}

	 We adopt the general convention that subscripts in e.g.~$\prec_\qm$ or $\ell_\qm$ will be dropped when this does not lead to confusion. Nevertheless, the subscripts will come in handy when multiple structures are involved.
	 
	 \begin{example} Let $\varphi=\blacksquare(\square p\wedge p) \rightarrow\square\blacksquare p$.
	 The following structure is a quasimodel, under the labelling given by $\ell_\qm(w)=\{\neg p,\ldots\}$, $\ell_\qm(v)=\{\square p\wedge p,\neg\blacksquare p,\ldots\}$ and
	 $\ell_\qm(u) = \{\blacksquare(\square p\wedge p),\neg \square \blacksquare p, \neg\varphi,\ldots\}$ (where `$\ldots$' indicates formulas omitted for simplicity).
	 \begin{figure}[H]
	 \centering
	 \begin{tikzpicture}[scale=0.7,decoration={snake, 
    amplitude = .25mm,
    segment length = 4mm,
    pre length=0.6mm}]
\draw[thick] (0,0) circle (.35);
\draw (0,0) node {$u$};
 
\draw[thick,->] (0,+.5) arc (0:+270:.5) ;
 
\draw (-1.1,+1.1) node {$S_\qm$};

\draw[thick] (0,-3) circle (.35);
\draw (0,-3) node {$v$};
\draw[thick,->] (0,-3.5) arc (0:-270:.5) ;
 
\draw (-1.1,-4.1) node {$S_\qm$};
\draw[thick,-> ] (.5,-3) -- (2.5,-3);
\draw (1.5,-3.5) node {$S_\qm$};
\draw[thick,<-,decorate] (0,-2.5) -- (0,-.5);
\draw (-.5,-1.5) node {\large$\rotatebox[origin=c]{90}{$\prec_\qm$}$};
\draw[thick] (3,-3) circle (.35);
\draw (3,-3) node {$w$};
\draw[thick,->] (3,-3.5) arc (-180:+90:.5) ;
 
\draw (4.1,-4.1) node {$S_\qm$};
\end{tikzpicture}
\end{figure}
This quasimodel falsifies $\varphi$, but it is known that the formula $\varphi$ is valid on every Aleksandroff topological space \cite{kmints}. We will see that the quasimodel above witnesses that $\varphi$ is not a theorem of $\mathbf{GLC}$. 
	 \end{example}
	 \begin{example}\label{examModisQ}
	 Quasimodels generalise dynamic poset models (i.e. dynamic topological models with the downset topology) in the following sense:
	 Suppose that $\sfont M$ is such a model, and let $\Sigma$ be any set of formulas closed under subformulas.
	 For $w\in |\sfont M|$, let $\ell_\Sigma (w) = \{\varphi\in \Sigma:w\in \val\varphi_\sfont M\}$.
	 Then, it is not hard to check that $(|\sfont M|,\prec_\sfont M,S_\sfont M,\ell_\Sigma)$ is indeed a (deterministic) $\Sigma$-quasimodel.
	 Henceforth, we will tacitly identify dynamic poset models with their associated quasimodel.
	 \end{example}

\subsection{Limit models}\label{secLimMod}
Once the notion of a quasimodel has been defined, we need to associate to each quasimodel a corresponding limit model.
We will construct it one part at a time, beginning with defining the elements of the domain.

\begin{definition}[realising path] Let $ \qm=(|\qm|,\prec,S,\ell)$ be a $\Sigma$-quasimodel. 
A path in  $\qm$ is a sequence $(w_n)_{n<\alpha}$ with $\alpha\leq\omega$ such that $w_mSw_{m+1}$, where $m+1<\alpha $. An infinite path $\vec{w}=(w_n)_{n<\omega}$ is called a \emph{realising path} if for all $m<\omega$ and $\blacklozenge\varphi\in\ell(w_m)$, there exists $k\geq m$ such that $\varphi\in \ell(w_k)$.  
\end{definition}
We denote the set of realising paths on $\qm$ by $|\vec{\qm}|$.
This set will be used as the universe of the limit model $\vec{\qm}$. 
We use the shift operator $\sigma$, defined by $\sigma((w_n)_{n<\omega})=(w_{n+1})_{n<\omega}$, as the transition function on $|\vec{\qm}|$.

\begin{lemma}\label{extensive}
Let $\qm=(|\qm|,\prec,S,\ell)$ be a $\Sigma$-quasimodel. Then
	\begin{enumerate}
		\item $|\vec \qm|$ is closed under $\sigma$;
		\item any finite path $(w_1,\dots, w_n)$ in $\qm$ can be extended to an infinite realising path $\vec{w}=(w_i)_{i<\omega}\in |\vec\qm|$.
	\end{enumerate}
\end{lemma}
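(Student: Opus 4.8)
The plan is to prove both parts by direct construction, using $\omega$-sensibility of $S$ as the engine. For part (1), given a realising path $\vec w = (w_n)_{n<\omega}$, I would show $\sigma(\vec w) = (w_{n+1})_{n<\omega}$ is again a realising path. It is certainly a path, since $w_mSw_{m+1}$ for all $m$ gives $w_{m+1}Sw_{m+2}$. For the realising condition, suppose $\blacklozenge\varphi \in \ell(w_{m+1})$ for some $m$. Then applying the realising property of $\vec w$ at index $m+1$ yields some $k \geq m+1$ with $\varphi \in \ell(w_k)$, and $w_k = (\sigma \vec w)_{k-1}$ with $k-1 \geq m$, as required.

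For part (2), the main work is to extend a finite path $(w_1,\dots,w_n)$ to an infinite realising one. The idea is a round-robin (dovetailing) construction: we repeatedly pick up any "pending" eventuality $\blacklozenge\varphi$ appearing in the label of the current endpoint and fulfil it using $\omega$-sensibility, while using seriality of $S$ to keep extending at all other times. Concretely, since each downset is finite and labels lie in $\mathbb T_\Sigma$ — wait, here $\Sigma$ need not be finite, so I should instead argue directly: starting from $w_n$, maintain a queue of as-yet-unfulfilled formulas of the form $\blacklozenge\varphi$ that have occurred in some label along the path so far. At each stage, if the queue is nonempty, take its first element $\blacklozenge\varphi$; by clause 2 of sensibility this eventuality propagates along $S$-successors until it is fulfilled, and by $\omega$-sensibility applied at the point where $\blacklozenge\varphi$ currently "lives" there is a finite $S$-path reaching a point whose label contains $\varphi$ — append this finite path, remove $\blacklozenge\varphi$ from the queue, and add any new $\blacklozenge$-formulas encountered. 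If the queue is empty, use seriality to append one more point. Since $\Sigma$-types are sets of formulas and only finitely many distinct $\blacklozenge$-formulas can ever enter the queue before a given stage (each new one appears in some label, and we only need to track those actually occurring) — more carefully, every eventuality that ever appears gets enqueued and eventually dequeued, so it is fulfilled at some later index; hence the resulting infinite path is realising.

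The subtle point, and the one I would be most careful about, is ensuring that *every* eventuality $\blacklozenge\varphi$ that appears at *every* index $m$ of the final path is fulfilled, not merely those present at the finitely many "handover" points. Here clause 2 of sensibility is exactly what saves us: if $\blacklozenge\varphi \in \ell(w_m)$ then either $\varphi \in \ell(w_m)$ already (done) or $\blacklozenge\varphi \in \ell(w_{m+1})$, so the obligation is never lost — it persists forward until discharged. Thus it suffices that our dovetailing discharges, for each index $m$, the obligation *as it stands at some index $\geq m$*, which the round-robin guarantees since the queue is processed fairly (first-in-first-out) and every obligation enters the queue at the step it first appears. Combining the persistence from clause 2 with fair scheduling gives that for every $m$ and every $\blacklozenge\varphi \in \ell(w_m)$ there is $k \geq m$ with $\varphi \in \ell(w_k)$, completing the construction. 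I do not expect any genuine obstacle beyond bookkeeping; the only thing to state cleanly is why the queue-processing terminates for each individual obligation (it does, because a single application of $\omega$-sensibility fulfils it in finitely many steps).
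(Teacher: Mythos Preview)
Your proposal is correct and follows the standard approach; the paper itself does not give a proof but simply cites \cite{FernandezNonDeterministic}, where essentially the same dovetailing argument is carried out. One small remark: in the paper's actual applications $\Sigma$ is always finite (written $\Sigma\Subset\fulllan$), so there are only finitely many formulas of the form $\blacklozenge\varphi$ in $\Sigma$ altogether, which lets you dispense with the fairness bookkeeping you worry about---every eventuality entering the queue is one of finitely many, so a simple round-robin over this fixed finite set suffices.
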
 
\begin{proof}
	A proof for this statement can be found in \cite{FernandezNonDeterministic}.
\end{proof}
The following lemma follows from the forward-confluence of $S$ by a straightforward induction on $n$:
\begin{lemma}\label{paths}
	Let $\qm$ be a $\Sigma$-quasimodel and let $(w_i)_{i\leq n}$ be a finite path. Let $v_0$ be such that $v_0\preccurlyeq w_0$.  Then, there exists a path $(v_i)_{i\leq n}$ such that $v_i\preccurlyeq w_i$, for $i\leq n$.
\end{lemma}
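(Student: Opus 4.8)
Lemma \ref{paths}: Let $\qm$ be a $\Sigma$-quasimodel and let $(w_i)_{i\leq n}$ be a finite path. Let $v_0$ be such that $v_0\preccurlyeq w_0$. Then, there exists a path $(v_i)_{i\leq n}$ such that $v_i\preccurlyeq w_i$, for $i\leq n$.

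The plan is to build the path $(v_i)_{i\le n}$ one point at a time, descending one $S$-step at a time in tandem with $(w_i)_{i\le n}$; phrased as an induction, this amounts to a routine induction on $n$. First I would dispose of the base case $n=0$, where there is nothing to prove: the hypothesis already supplies $v_0\preccurlyeq w_0$, and the single-point sequence $(v_0)$ is a path lying pointwise below $(w_0)$.

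For the recursive step I would assume that $v_0,\dots,v_i$ have already been chosen with $v_j\preccurlyeq w_j$ for every $j\le i$ and $v_j\mathrel S v_{j+1}$ for every $j<i$, where $i<n$. Since $(w_j)_{j\le n}$ is a path we have $w_i\mathrel S w_{i+1}$, and since $\qm$ is a quasimodel its transition relation $S$ is a continuous relation. I would then invoke forward-confluence of $S$, applied to $v_i\preccurlyeq w_i$ and $w_i\mathrel S w_{i+1}$, to obtain a point $v_{i+1}$ with $v_i\mathrel S v_{i+1}$ and $v_{i+1}\preccurlyeq w_{i+1}$; appending $v_{i+1}$ and repeating until $i+1=n$ yields the desired path $(v_i)_{i\le n}$.

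I do not anticipate any genuine obstacle: nothing about $\qm$ is used beyond $\prec_\qm$ being a partial order and $S_\qm$ a continuous relation, so the labelling, sensibility and $\omega$-sensibility are all irrelevant here, and no appeal to Kruskal's theorem or to the limit-model machinery is needed. The one point that deserves a moment's care is applying forward-confluence in the correct orientation — transporting the $S$-edge issuing from the larger point $w_i$ down to a matching $S$-edge issuing from its predecessor $v_i$, which is precisely what continuity of $S$ with respect to the downset topology provides.
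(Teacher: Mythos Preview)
Your proposal is correct and matches the paper's approach exactly: the paper simply states that the lemma ``follows from the forward-confluence of $S$ by a straightforward induction on $n$,'' which is precisely the argument you spell out. Your observation that only continuity of $S$ is used (not sensibility or $\omega$-sensibility) is accurate and a nice clarification.
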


Fern\'andez-Duque~\cite{FernandezNonDeterministic} showed that $|\vec{\qm}|$ can be equipped with a natural topology making the shift operator continuous.
However, the topology given there is not necessarily scattered, even if $\prec_\qm$ is well-founded.
Fortunately, the construction can be modified to ensure that the resulting space is indeed scattered.

\begin{proposition}
Let $\vec{w}$ be a realising path. We define the $m$-neighbourhood $N_m({\vec{w}})$ of $\vec{w}$ to be the set of all realising paths $\vec{v}$ such that
\begin{enumerate}[(i)]
	\item $ v_i \preccurlyeq w_i $ for all $ i<m$;
	\item\label{caseEqual} if $v_k = w_k$ for some $k<m$, then $v_j = w_j$ for all $j\geq k$.
\end{enumerate}
Then, the collection $\mathcal B_\prec$ of such neighbourhoods forms a topological basis on $|\vec{\qm}|$.
\end{proposition}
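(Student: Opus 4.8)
The plan is to verify the two standard axioms for a collection of sets to form a basis: first, that every realising path lies in some basic set, and second, that the intersection of two basic sets containing a point $\vec w$ contains a basic set around $\vec w$. The first condition is immediate, since $\vec w\in N_0(\vec w)$ trivially (the defining conditions (i) and (ii) are vacuous when $m=0$), and in fact $\vec w \in N_m(\vec w)$ for every $m$. So the real content is the intersection property.

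First I would analyse the shape of $N_m(\vec w)$. The key observation is condition (\ref{caseEqual}): a realising path $\vec v\in N_m(\vec w)$ either satisfies $v_i\prec w_i$ strictly for all $i<m$, or there is a least $k<m$ with $v_k=w_k$, and then $\vec v$ agrees with $\vec w$ from stage $k$ onward. Thus $N_m(\vec w)$ is a disjoint union of the ``strictly below'' paths and finitely many ``eventually equal'' cones. Using this, I would show the nesting behaviour $N_{m'}(\vec w)\subseteq N_m(\vec w)$ whenever $m'\geq m$: if $\vec v$ satisfies (i) and (ii) up to $m'$, then certainly $v_i\peq w_i$ for $i<m\leq m'$, and the ``once equal, always equal'' clause is inherited. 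Hence the family $\{N_m(\vec w)\}_{m<\omega}$ is a decreasing chain.

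With the nesting in hand, the intersection property becomes easy for two basic sets centred at the \emph{same} point: $N_m(\vec w)\cap N_{m'}(\vec w) = N_{\max(m,m')}(\vec w)$. The genuine work is when $\vec w\in N_m(\vec u)\cap N_{m'}(\vec v)$ for possibly different centres $\vec u,\vec v$. Here I would argue that $N_n(\vec w)\subseteq N_m(\vec u)\cap N_{m'}(\vec v)$ for $n$ large enough — concretely $n=\max(m,m')$ should work, and I would check it suffices to take $n = \max(m,m')$. The argument: suppose $\vec x\in N_n(\vec w)$, so $x_i\peq w_i$ for $i<n$ with the equal-tail property relative to $\vec w$; and since $\vec w\in N_m(\vec u)$ we have $w_i\peq u_i$ for $i<m$ with the equal-tail property relative to $\vec u$. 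Composing, $x_i\peq w_i\peq u_i$ for $i<m$ by transitivity of $\peq$, giving condition (i) for membership in $N_m(\vec u)$. For condition (\ref{caseEqual}), suppose $x_k=u_k$ for some $k<m$. Then from $x_k\peq w_k\peq u_k$ and antisymmetry of $\peq$ we get $x_k=w_k$ and $w_k=u_k$; the equal-tail property of $\vec x$ relative to $\vec w$ then forces $x_j=w_j$ for all $j\geq k$, and the equal-tail property of $\vec w$ relative to $\vec u$ forces $w_j=u_j$ for all $j\geq k$, so $x_j=u_j$ for all $j\geq k$, as required. The argument for $N_{m'}(\vec v)$ is identical. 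Thus $N_n(\vec w)$ is a basic set witnessing the intersection property.

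The main obstacle — though it is more a matter of care than of depth — is the bookkeeping around condition (\ref{caseEqual}) and the interaction of the strict order $\prec$ with its reflexive closure $\peq$: one must repeatedly use that $\peq$ is a genuine partial order (in particular antisymmetric) to collapse chains of $\peq$-inequalities into equalities when needed, and one must make sure the ``branch point'' $k$ behaves consistently across the composed conditions. I would also double-check the edge case $m=0$ or $m'=0$ separately, where the corresponding neighbourhood is all of $|\vec\qm|$ and the claim is trivial. No appeal to scatteredness or well-foundedness of $\prec_\qm$ is needed for this proposition; those will matter only later when verifying that the resulting topology is in fact scattered.
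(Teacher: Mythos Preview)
Your proposal is correct and follows essentially the same approach as the paper: both verify the covering axiom via $N_0(\vec w)=|\vec\qm|$, and both handle the intersection axiom by showing that if $\vec w\in N_m(\vec u)\cap N_{m'}(\vec v)$ then $N_{\max(m,m')}(\vec w)$ is contained in the intersection, using transitivity of $\preccurlyeq$ for condition (i) and antisymmetry (equivalently, irreflexivity of $\prec$) to collapse the chain $x_k\preccurlyeq w_k\preccurlyeq u_k=x_k$ for condition (ii). Your additional observations about nesting and the same-centre case are correct but not needed for the argument.
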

\begin{proof}
    First, note that $\bigcup_{B\in\sfont B_\prec}B=|\vec\qm|$. This is the case since for any realising path $\vec{w}\in |\vec{\qm}|$, $N_0(\vec{w})=|\vec\qm|$ as (i) and (ii) vacuously hold for every $\vec{v}\in|\vec\qm|$.
    
    Next, we prove that for all $B_1,B_2\in \sfont B_\prec$ and $\vec{u}\in B_1\cap B_2$, there exists $B_3\subseteq B_1\cap B_2$ such that $\vec{u}\in B_3$. Let $\vec{u}\in N_i(\vec{w})\cap N_j(\vec{v})$ and suppose without loss of generality that $i\leq j$. We prove that $N_j(\vec u)\subseteq N_i(\vec{w})\cap N_j(\vec{v}) $. 
    
    First, note that for all $r<j$ we have that $u_r\preccurlyeq v_r$ by definition, but also $u_r\preccurlyeq w_r$ since either $u_r\prec w_r$ for all $r<j$ or there exists some $k<m$ for which $u_k=w_k$ and then by (ii) this holds from $k$ henceforth.
    
     Let $\vec x\in N_j(\vec u)$. 
     By definition, $x_r\preccurlyeq u_r$ for all $r<j$. 
     By transitivity of $\prec$ we have two cases to consider: either $x_r\preccurlyeq w_r$ for all $r<i$ and then by (i) we have that $\vec{x}\in N_i(\vec w)$; 
     or $x_k=w_k$ for some $k<i$, but then $w_k= u_k$ since $x_k\preccurlyeq u_k\preccurlyeq w_k$ and $\prec$ is irreflexive. In particular, $w_r=u_r$ for all $r\geq k$ by (ii), and $u_k=x_k$ implies $x_r=u_r$ for all $r\geq k$. It follows that $\vec x\in N_i(\vec{w})$.

   The proof for $x\in N_j(\vec v)$ is analogous. 
   It therefore follows that   $N_j(\vec u)\subseteq N_i(\vec{w})\cap N_j(\vec{v}) $.
\end{proof}

We denote by $\vec{\mathcal{T}}_\prec$ the topology on $|\vec{\qm}|$ generated by the basis $\mathcal{B}_\prec$.

\begin{lemma}\label{lemmScattered}
   The topological space $(  |\vec{\qm}|,\vec{\mathcal T}_\prec)$ is  scattered.
\end{lemma}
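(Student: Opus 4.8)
The plan is to show directly that $(|\vec{\qm}|,\vec{\mathcal T}_\prec)$ satisfies the defining condition of scatteredness: every nonempty $A\subseteq|\vec{\qm}|$ has a point isolated in $A$. The key observation is that the basic neighbourhoods $N_m(\vec w)$ become ``smaller'' as $m$ grows, and, crucially, clause \eqref{caseEqual} forces a realising path to ``lock in'' from the first coordinate at which it agrees with $\vec w$. So the natural move is to exploit the well-foundedness that lives inside the quasimodel, namely local finiteness of $\prec_\qm$, rather than to look for scatteredness of a product topology (which, as the text warns, fails in general).

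First I would fix a nonempty $A\subseteq|\vec{\qm}|$ and try to extract from $A$ a path that is $\preccurlyeq$-minimal in a suitable initial segment. Concretely, pick any $\vec w\in A$; since $\mathord\downarrow w_0$ is finite, among all paths in $A$ whose first coordinate lies in $\mathord\downarrow w_0$ there is one, say $\vec v$, whose first coordinate $v_0$ is $\prec_\qm$-minimal. Then refine: among paths in $A\cap N_1(\vec v)$ choose one with $\prec_\qm$-minimal second coordinate, and so on. Because each $\mathord\downarrow w_i$ is finite, at stage $i$ there are only finitely many candidate values for the $i$-th coordinate, so this process is well defined; but a priori it only produces an $\omega$-sequence of successive refinements rather than a single witness. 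The point I would push is that the refinement must \emph{stabilise}: at some finite stage $k$ the chosen path $\vec v$ is already $\preccurlyeq$-minimal in $A\cap N_k(\vec v)$ at \emph{all} coordinates $<k$, because otherwise one produces an infinite strictly $\prec_\qm$-descending sequence (diagonally, across coordinates) inside the finite sets $\mathord\downarrow v_0,\mathord\downarrow v_1,\dots$, contradicting well-foundedness of each downset together with local finiteness. Once such a $k$ is found, I claim $N_k(\vec v)\cap A=\{\vec v\}$: if $\vec x\in N_k(\vec v)\cap A$ then $x_i\preccurlyeq v_i$ for $i<k$, and $\prec_\qm$-minimality of each $v_i$ in the relevant refined set forces $x_i=v_i$ for all $i<k$; but then clause \eqref{caseEqual} of the definition of $N_k$ (applied with $k$ in the role of the agreement index, or with $i=0$) forces $x_j=v_j$ for all $j$, i.e.\ $\vec x=\vec v$. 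Hence $\vec v$ is isolated in $A$.

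The main obstacle is getting the stabilisation argument right — turning ``keep refining and picking $\prec_\qm$-minimal coordinates'' into an honest finite termination. The subtlety is that minimality chosen at coordinate $i$ may be destroyed when we later shrink the neighbourhood at coordinate $j>i$ (passing to $N_{j}$ discards some paths, so a coordinate that was not minimal before may become the only option, or conversely a smaller value may survive). I expect the clean way to handle this is to argue by contradiction: assume no finite $k$ works, build an infinite sequence of paths $\vec v^{(0)},\vec v^{(1)},\dots$ in $A$ together with coordinates $i_0<i_1<\cdots$ such that $v^{(n+1)}_{i_n}\prec_\qm v^{(n)}_{i_n}$ while all three neighbourhoods $N_{i_n}$ nest, and then use clause \eqref{caseEqual} to propagate the strict descent into a fixed coordinate, or alternatively observe that the sequence $(v^{(n)}_{0},v^{(n)}_1,\dots,v^{(n)}_{i_n})$ forms an infinite antichain-free descending structure in the finite posets $\mathord\downarrow w_i$, contradicting well-foundedness. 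Lemma~\ref{paths} may help here, guaranteeing that the candidate ``minimal'' paths actually exist as genuine realising paths once we fix their initial segments; combined with Lemma~\ref{extensive}(2), any suitable finite $\preccurlyeq$-minimal initial segment in $A$ extends to a realising path, which is what lets the induction run. Everything else — that $\mathcal B_\prec$ is a basis, that $N_m$ are open — is already in hand from the preceding proposition.
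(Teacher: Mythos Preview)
Your argument contains the right ingredients but misses a dramatic simplification that the paper exploits. The paper's proof is essentially one step: choose $\vec x\in A$ with $x_0$ $\prec_\qm$-minimal among first coordinates of elements of $A$ (well-foundedness of $\prec_\qm$ is immediate from local finiteness), and take the neighbourhood $N_1(\vec x)$. Any $\vec v\in N_1(\vec x)\cap A$ has $v_0\preccurlyeq x_0$, hence $v_0=x_0$ by minimality; then clause~\ref{caseEqual} with $k=0<1=m$ forces $v_j=x_j$ for all $j\geq 0$, so $\vec v=\vec x$ and $\vec x$ is isolated in $A$.

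You in fact perform exactly this step first --- your choice of $\vec v$ with $v_0$ minimal among paths in $A$ with first coordinate in $\mathord\downarrow w_0$ --- but do not notice that the proof is already finished: for $\vec x\in N_1(\vec v)\cap A$ one has $x_0\preccurlyeq v_0\preccurlyeq w_0$, so $x_0\in\mathord\downarrow w_0$, whence $x_0=v_0$ by your own minimality choice, and clause~\ref{caseEqual} applies. The entire refinement-and-stabilisation apparatus for higher coordinates is unnecessary. Since the stabilisation argument is precisely the place you flag as ``the main obstacle'' and leave only as a sketch (the diagonal descent across different coordinates does not obviously collapse to a contradiction with well-foundedness of a single $\mathord\downarrow v_i$), that is also where your write-up has a genuine gap; making it rigorous would take real work, none of which is needed once you observe that $m=1$ already suffices.
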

\begin{proof}

 We need to show that for all non-empty $A\subseteq |\vec\qm|$ there exists $\vec x\in A$ and a neighbourhood $O_{\vec x} $ of $\vec x$ such that $O_{\vec x}\cap A=\{\vec x\}$. 
 Suppose $A\subseteq |\vec\qm|$.
 Since by definition $(|\vec{\qm}|,\prec)$ is a locally finite strict partial order, it is well-founded.
 Hence we may choose $\vec x\in A$ such that $x_0$ is $\prec$-minimal among all elements of $A$, and set $O_{\vec x} =N_1(\vec{x})$.
 Observe that $N_1(\vec x)$ contains only $\vec v$ such that $v_0 \preccurlyeq x_0$, which by the minimality of $x_0$ implies that $v_0=x_0$.
 But then, $\vec v $ must belong to $N_1(\vec x)$ by virtue of clause \ref{caseEqual}, which yields $v_i=x_i$ for all $i\geq 0$, i.e.\ $\vec v=\vec x$. Hence, $N_1(\vec{x})\cap A=\{\vec x\}$, as required.
\end{proof}
It is only left to show that the transition system $\sigma$ is continuous. 
\begin{lemma}\label{contmap}
	The shift map $\sigma:|\vec{\qm}|\rightarrow|\vec{\qm}|$ is continuous under the topology $\vec{\mathcal{T}}_\prec$.
\end{lemma}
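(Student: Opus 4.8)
The plan is to reduce continuity of $\sigma$ to a statement about basic open sets. Since $\vec{\mathcal T}_\prec$ is generated by $\mathcal B_\prec$, it suffices to show that $\sigma^{-1}(N_m(\vec w))$ is open for every basic neighbourhood $N_m(\vec w)$, and for that it is enough to produce, for each realising path $\vec v$ with $\sigma(\vec v)\in N_m(\vec w)$, a basic neighbourhood $B$ of $\vec v$ with $B\subseteq\sigma^{-1}(N_m(\vec w))$. My candidate is $B=N_{m+1}(\vec v)$: the extra index is exactly what is needed to absorb the index shift introduced by $\sigma$, since membership of $\sigma(\vec x)$ in $N_m(\vec w)$ constrains the entries $x_1,\dots,x_m$ of $\vec x$, i.e.\ the entries indexed below $m+1$.

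I would then verify the inclusion directly from the definitions. First, $\vec v\in N_{m+1}(\vec v)$, as clauses (i) and (ii) hold trivially when the reference path is $\vec v$ itself. Next, given $\vec x\in N_{m+1}(\vec v)$, I would check that $\sigma(\vec x)=(x_{n+1})_{n<\omega}$ satisfies clauses (i) and (ii) relative to $\vec w$. Clause (i) is a two-step application of transitivity of $\prec$: for $i<m$ we have $x_{i+1}\preccurlyeq v_{i+1}$ (as $i+1<m+1$) and $v_{i+1}\preccurlyeq w_i$ (as $\sigma(\vec v)\in N_m(\vec w)$), hence $x_{i+1}\preccurlyeq w_i$. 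Clause (ii) is where the bookkeeping happens: if $x_{k+1}=w_k$ for some $k<m$, then $x_{k+1}\preccurlyeq v_{k+1}\preccurlyeq w_k=x_{k+1}$, so since $\prec$ is irreflexive and transitive (hence $\preccurlyeq$ is antisymmetric) we get $x_{k+1}=v_{k+1}=w_k$; then clause (ii) for $\sigma(\vec v)\in N_m(\vec w)$ propagates $v_{j+1}=w_j$ for all $j\geq k$, and clause (ii) for $\vec x\in N_{m+1}(\vec v)$ propagates $x_{j+1}=v_{j+1}$ for all $j\geq k$, and chaining these gives $x_{j+1}=w_j$ for all $j\geq k$, which is exactly clause (ii) for $\sigma(\vec x)$.

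I do not expect a genuine obstacle here; the content is purely combinatorial bookkeeping of indices and of the two ``rigid tail'' clauses. The one point requiring a little care is clause (ii): one must notice that $x_{k+1}=w_k$ forces the chain of $\preccurlyeq$-inequalities to collapse to equalities, so that both the tail-rigidity of $N_m(\vec w)$ (through $\vec v$) and that of $N_{m+1}(\vec v)$ can be invoked simultaneously; without collapsing to equality, neither clause would apply. I would also remark explicitly that $\sigma(\vec x)$ and $\sigma(\vec v)$ are themselves realising paths, as guaranteed by Lemma~\ref{extensive}, so that the expressions $N_m(\vec w)$, $N_{m+1}(\vec v)$ and membership statements are all well-formed.
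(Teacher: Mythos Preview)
Your proposal is correct and follows essentially the same approach as the paper: both arguments hinge on the inclusion $\sigma(N_{m+1}(\vec v))\subseteq N_m(\sigma(\vec v))$, using the extra index to absorb the shift. The only cosmetic difference is that the paper checks continuity at each point (taking the target neighbourhood to be $N_m(\sigma(\vec w))$), whereas you check that preimages of arbitrary basic opens are open; your clause~(ii) verification is accordingly slightly more detailed, but the content is the same.
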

\begin{proof}
Let $\vec w$ be a realising path and let $N_m(\sigma (\vec w))$ be a neighbourhood of $\sigma(\vec{w})$. By definition, if  $\vec{v}\in N_{m+1}(\vec{w})$ then either $ v_i \prec  w_i $ for all $i<m$, or there exists $k<m$ such that $v_k=w_k$ and this holds henceforth. In any case it follows that $\sigma(\vec v)\in N_m(\sigma (\vec w))$ and in particular $\sigma (N_{m+1}(\vec{w}))\subseteq N_m(\sigma(\vec{w}))$, which implies that $\sigma$ is a continuous function.
\end{proof}
Putting all of the above pieces together, we can now define the limit model of a quasimodel.
\begin{definition}[limit model] Given a quasimodel $\qm=(|\qm|,\prec,\ell,S)$, we define its corresponding \emph{limit model} as a structure
$$ \vec{\qm}=\Big(|\vec{\qm}|,\vec{\mathcal{T}}_\prec,\sigma,\llbracket\cdot\rrbracket^\ell\Big),$$
	where $|\vec{\qm}|$ is the set of realising paths of $\qm$, $\vec{\mathcal{T}}_\prec$ is the topology on $|\vec{\qm}|$ generated by the basis $\mathcal{B}_\prec$, $\sigma$ is the shift operator on $|\vec{\qm}|$ and $\llbracket\cdot\rrbracket^\ell$ is a valuation defined on each propositional variable $p$ as
	$$ \llbracket p \rrbracket^\ell=\{\vec{w}:p\in\ell(w_0)\}.$$
\end{definition}

The key feature of our quasimodel, aside from being a scattered dynamical system, is that it indeed satifies all formulas satisfied by $\qm$.

\begin{lemma}\label{trucon}
Given a quasimodel $\qm$ and a realising path $\vec{w}=(w_n)_{n<\omega}$, if $\varphi\in\ell_\qm(w_0)$, then
$\vec w\in \val{\varphi}^\ell$.
\end{lemma}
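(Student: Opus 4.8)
The plan is to prove Lemma~\ref{trucon} by induction on the structure of $\varphi$, simultaneously establishing the statement for all realising paths $\vec w$ (so that the inductive hypothesis can be applied to the shifted path $\sigma(\vec w)$ and to paths through $\prec$-predecessors of $w_0$). The Boolean cases are immediate from the fact that $\ell_\qm(w_0)$ is a type: the conditions in Definition~\ref{sigtype} (no formula together with its negation, closure under removing double negations, closure under conjuncts, and the disjunctive clause for negated conjunctions) give exactly what is needed to push truth through $\wedge$ and $\neg$ once we also know the converse implication for the relevant subformulas. In fact it is cleanest to prove the two-sided statement ``$\varphi\in\ell(w_0)$ iff $\vec w\in\val\varphi^\ell$'' when $\varphi\in\Sigma$, but since $\ell$ ranges only over $\Sigma$-types and $\Sigma$ is closed under single negations, for each $\varphi\in\Sigma$ exactly one of $\varphi,\neg\varphi$ lies in $\ell(w_0)$, so the one-sided statement for all subformulas automatically yields the biconditional.

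The modal cases are where the structure of the limit model is used. For $\varphi=\lozenge\psi$: if $\lozenge\psi\in\ell(w_0)$, the labelled-poset condition gives $v_0\prec w_0$ with $\psi\in\ell(v_0)$; by Lemma~\ref{extensive}(2) (or Lemma~\ref{paths} applied along $\vec w$) we extend $v_0$ to a realising path $\vec v$ with $v_i\preccurlyeq w_i$ for all $i$ and $v_0\prec w_0$, hence $\vec v\in N_1(\vec w)\setminus\{\vec w\}$ and in fact $\vec v$ lies in every basic neighbourhood $N_m(\vec w)$ once we take care to build $\vec v$ as a pointwise $\prec$-predecessor, so $\vec v$ witnesses $\vec w\in d(\val\psi^\ell)$ by the inductive hypothesis. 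Conversely, if $\vec w\in d(\val\psi^\ell)$ then every $N_m(\vec w)$ contains some $\vec v\neq\vec w$ with $\psi\in\ell(v_0)$; taking $m$ large and using clause~\eqref{caseEqual} of the neighbourhood definition forces $v_0\prec w_0$ (strictly, since $v_0=w_0$ would propagate to $\vec v=\vec w$), and then the $\square$-half of the labelling condition—contrapositively, $\neg\lozenge\psi\in\ell(w_0)$ would mean $\square\neg\psi\in\ell(w_0)$ hence $\neg\psi\in\ell(v_0)$—gives $\lozenge\psi\in\ell(w_0)$. For $\varphi=\newmoon\psi$: continuity/sensibility of $S$ gives $w_0 S w_1$ and sensibility clause (1) transfers $\newmoon\psi\in\ell(w_0)$ to $\psi\in\ell(w_1)$; since $\sigma(\vec w)$ is again a realising path (Lemma~\ref{extensive}(1)), the inductive hypothesis gives $\sigma(\vec w)\in\val\psi^\ell$, i.e.\ $\vec w\in f^{-1}(\val\psi^\ell)=\val{\newmoon\psi}^\ell$; the converse uses the other half of sensibility clause (1) on $\neg\newmoon\psi$.

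For $\varphi=\blacklozenge\psi$: if $\blacklozenge\psi\in\ell(w_0)$, then because $\vec w$ is a \emph{realising} path there is $k$ with $\psi\in\ell(w_k)$; applying the inductive hypothesis to the realising path $\sigma^k(\vec w)$ gives $\sigma^k(\vec w)\in\val\psi^\ell$, so $\vec w\in\bigcup_{n\ge0}f^{-n}(\val\psi^\ell)=\val{\blacklozenge\psi}^\ell$. Conversely, if $\vec w\in\val{\blacklozenge\psi}^\ell$ then $\sigma^k(\vec w)\in\val\psi^\ell$ for some $k$, so $\psi\in\ell(w_k)$ by the inductive hypothesis; walking the sensibility clause (2) for $\blacklozenge$ backwards along $w_0 S w_1 S\cdots S w_k$ shows $\blacklozenge\psi\in\ell(w_0)$ (at each step, either $\psi$—hence $\blacklozenge\psi$ via the type closure, as $\blacklozenge\psi$ and $\psi$ being in $\Sigma$ forces the right membership—or $\blacklozenge\psi$ itself is inherited). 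The main obstacle, and the point deserving the most care, is the $\lozenge$-case: one must verify that the predecessor path $\vec v$ produced by Lemmas~\ref{extensive}/\ref{paths} can be chosen to sit inside \emph{every} basic neighbourhood $N_m(\vec w)$ (so that it genuinely witnesses being a limit point, not merely membership in one neighbourhood), and dually that the scattered topology's neighbourhood basis—specifically clause~\eqref{caseEqual}—is exactly strong enough to rule out the degenerate case $v_0=w_0$ when extracting a strict predecessor from ``$\vec w$ is a limit point''. Everything else is a routine unwinding of the definitions of sensibility, $\omega$-sensibility, and the valuation $\val\cdot^\ell$.
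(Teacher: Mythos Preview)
Your overall strategy matches the paper: structural induction over all realising paths simultaneously, with the biconditional following from $\Sigma$-completeness of types. The temporal cases are essentially right, though your converse for $\blacklozenge$ invokes sensibility clause~(2) in the wrong direction; to propagate $\blacklozenge\psi$ from $\ell(w_k)$ back to $\ell(w_0)$ you need the contrapositive of clause~(3) for $\blacksquare$ together with $\Sigma$-completeness of types (the paper sidesteps this entirely by proving the forward direction for $\blacksquare\psi$ as a separate case).

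The genuine gap is in the $\lozenge$ case, which you rightly flag as the delicate point but do not resolve. You want a single realising path $\vec v$ with $v_i\preccurlyeq w_i$ for all $i$, lying in every $N_m(\vec w)$. Neither Lemma~\ref{paths} (which yields only finite paths) nor Lemma~\ref{extensive} (which extends to a realising path but ignores $\vec w$) provides this. Forward-confluence gives only $\preccurlyeq$, not $\prec$; if the greedy construction never hits equality you obtain $v_i\prec w_i$ for all $i$, but then nothing makes the resulting infinite path \emph{realising}, so it may not even be a point of $|\vec\qm|$. And a path with $v_i\preccurlyeq w_i$ that hits $v_k=w_k$ and then diverges fails clause~\eqref{caseEqual} of the neighbourhood definition and is not in $N_m(\vec w)$ for $m>k$. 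The paper's fix is to abandon the single-witness ambition and build one $\vec v$ \emph{per neighbourhood} $N_m(\vec w)$: use Lemma~\ref{paths} for the first $m$ coordinates; if $v_k=w_k$ for some $k<m$, set $v_i:=w_i$ for all $i\geq k$ (so $\vec v$ eventually coincides with $\vec w$, hence is realising, and clause~\eqref{caseEqual} holds by construction); otherwise $v_i\prec w_i$ for all $i<m$, clause~\eqref{caseEqual} is vacuous, and you may extend freely to a realising path via Lemma~\ref{extensive} since $N_m(\vec w)$ imposes no constraint beyond coordinate~$m$.
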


\begin{proof}
Let $\qm=(W,\prec ,S ,\ell)$.
	The proof proceeds by a standard induction on the complexity of the formula. The induction steps for $\wedge$ and $\neg$ are routine. We will prove the induction steps of the spatial operator $\lozenge$ and the temporal operators $\blacklozenge$ and $\newmoon$.
	\medskip
		
\noindent {\sc Case $\varphi=\newmoon\psi$ or $\varphi=\neg \newmoon\psi$:} This follows from the fact that $(w_0,w_1)$ is sensible and by the induction hypothesis.
\medskip		
			
\noindent {\sc Case $\varphi=\blacklozenge\psi$:} Since $\vec{w}$ is a realising path, $\blacklozenge\psi\in\ell(w_0)$ implies $\psi\in\ell(w_n)$ for some $n\geq 0$. By the induction hypothesis $\sigma^n(\vec{w})\in \llbracket\psi\rrbracket^\ell$ and so $\vec{w}\in \llbracket \blacklozenge\psi\rrbracket^\ell$.
\medskip		
			
\noindent {\sc Case $\varphi=\blacksquare\psi$:} Since $(w_n,w_{n+1})$ is sensible for all $n$, by a simple induction it follows that $\psi\in\ell(w_n)$. Then, by the induction hypothesis $\sigma^n(\vec{w})\in\llbracket\psi\rrbracket^\ell$, and since $n$ is arbitrary then $\vec{w}\in\llbracket\blacksquare\psi\rrbracket^\ell$. 
\medskip		
			
\noindent {\sc Case $\varphi=\lozenge\psi$}:
Suppose $\lozenge\psi\in\ell(w_0)$ and let $N=N_m(\vec w)$ be a neighbourhood of $\vec w$.
Since $\ell$ is  a labelling function, $\psi\in \ell(v_0)$ for some $v_0$ such that $v_0\prec w_0$.
By Lemma \ref{paths}, there is a path $( v_i)_{i<n}$ such that $v_i\peq w_i$.
If $v_k=w_k$ for some (least) $k<m$, then define $v_i=w_i$ for all $i>k$ (redefining values if needed).
Otherwise, apply Lemma \ref{extensive} to extend $( v_i)_{i<n}$ to a realising path $\vec v$.
In either case, it is readily checked that $\vec v\in N$.
By the induction hypothesis we get $\vec{v}\in \llbracket \psi\rrbracket^\ell $. 
Then, from Lemma \ref{paths} and the definition of the topology $\vec{\mathcal T}_\prec$ we get $\vec{w}\in \llbracket \lozenge\psi\rrbracket^\ell$, as required.
\medskip		
			
\noindent {\sc Case $\varphi=\Box\psi$:}
Suppose that $\Box\psi\in \ell(w_0)$.
By the semantics of $\Box$, we need to find a neighbourhood $N$ of $\vec w$ such that $N\setminus \{\vec w\}\subseteq \val\psi^\ell$.
We propose $N=N_1(\vec w)$.
Then, if $\vec v\in N\setminus \{\vec w\}$, we must have that $v_0\prec w_0$ (as $v_0=w_0$ forces $\vec v=\vec w$ by the definition of $N$).
Since $\ell$ is a labelling function, $ \psi\in\ell(v_0)$, hence the induction hypothesis yields $\vec v\in \val\psi^\ell $, as needed.
	\end{proof}
We are now ready to prove the main result of this section.
\begin{theorem}\label{quastolim}
Let $\qm$ be a quasimodel. 
Then $\vec{\qm}$ is a scattered dynamical model, and if $\qm$ satisfies $\varphi$, it follows that $\vec{\qm}$ satisfies $\varphi$.
\end{theorem}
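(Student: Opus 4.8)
The plan is to combine the structural results already established in this section into a single statement. First I would verify that $\vec{\qm}$ is a scattered dynamical model: this is essentially immediate from the preceding lemmas. Lemma~\ref{lemmScattered} shows $(|\vec\qm|,\vec{\mathcal T}_\prec)$ is a scattered space, Lemma~\ref{contmap} shows the shift map $\sigma$ is continuous on this space, and the valuation $\llbracket\cdot\rrbracket^\ell$ is well-defined by construction; together with Lemma~\ref{extensive}(2), which guarantees $|\vec\qm|$ is nonempty whenever $|\qm|$ is (so that the underlying space is a genuine topological space and $\sigma$ is a total function $|\vec\qm|\to|\vec\qm|$, again using Lemma~\ref{extensive}(1)), this gives that $\vec{\qm}=(|\vec\qm|,\vec{\mathcal T}_\prec,\sigma,\llbracket\cdot\rrbracket^\ell)$ is a dynamic topological model based on a scattered space, i.e.\ a member of $\csct$ with a valuation.

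Next I would unwind what it means for $\qm$ to satisfy $\varphi$. By the convention adopted when defining quasimodels, $\qm$ satisfies exactly those formulas occurring in the range of $\ell_\qm$; so suppose $\varphi\in\ell_\qm(w)$ for some $w\in|\qm|$. The goal is to produce a realising path $\vec{w}\in|\vec\qm|$ with $\varphi\in\ell_\qm(w_0)$, since then Lemma~\ref{trucon} immediately gives $\vec w\in\llbracket\varphi\rrbracket^\ell$, and hence $\vec{\qm}$ satisfies $\varphi$. To obtain such a path, I would apply Lemma~\ref{extensive}(2) to the length-one path $(w)$: it extends to an infinite realising path $\vec{w}=(w_i)_{i<\omega}$ with $w_0=w$, so $\varphi\in\ell_\qm(w_0)$ as required. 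This is the entire argument; the substantive content has been front-loaded into Lemmas~\ref{extensive}, \ref{lemmScattered}, \ref{contmap} and, crucially, the truth lemma~\ref{trucon}.

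The main obstacle, such as it is, is not in this proof but in Lemma~\ref{trucon}, which has already been established above; here the only point requiring a little care is making sure the notion of ``$\qm$ satisfies $\varphi$'' is pinned down correctly (formulas in the range of $\ell_\qm$, witnessed at some point, not necessarily the root) and that the quantifier structure lines up: satisfaction of $\varphi$ by $\vec\qm$ means $\vec w\models\varphi$ for \emph{some} $\vec w$, which matches ``$\varphi$ appears in some label''. One should also note in passing that $\vec{\qm}$ need not satisfy $\varphi$ \emph{everywhere} — only at the paths through points labelled by $\varphi$ — so the statement is about pointwise satisfiability being preserved, which is exactly what the completeness argument in later sections needs. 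I would therefore keep the proof short: assemble the scattered-space and continuity lemmas for the first clause, and combine Lemma~\ref{extensive}(2) with Lemma~\ref{trucon} for the second.
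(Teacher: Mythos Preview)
Your proposal is correct and follows essentially the same approach as the paper: assemble Lemmas~\ref{lemmScattered} and~\ref{contmap} to verify that $\vec{\qm}$ is a scattered dynamical model, then use Lemma~\ref{extensive} to extend a witnessing point to a realising path and apply Lemma~\ref{trucon}. The only difference is that you are slightly more explicit about totality and nonemptiness of $|\vec\qm|$, which the paper leaves implicit.
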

\begin{proof}
By Lemma \ref{lemmScattered}, $(|\vec{\qm}|,\vec{\mathcal{T}}_\prec)$ is a scattered space.
	It is clear that $\sigma$ is a function, and by Lemma \ref{contmap} it is continuous.
	It is therefore the case that $(|\vec{\qm}|,\vec{\mathcal{T}}_\prec,\sigma)$ is a scattered dynamical system. Suppose $w_0\in |\qm|$ and $\varphi\in \ell(w_0)$. 
	Then, by Lemma \ref{extensive}, $w_0$ can be extended to a realising path $\vec{w}$. 
	From Lemma \ref{trucon} we get that $\vec{w}\in\llbracket\varphi\rrbracket^\ell$, hence $ \varphi$ is satisfied in $\vec{\qm}$. 
\end{proof}

\begin{corollary}\label{corQSound}
Every formula satisfiable on a quasimodel is satisfiable on a scattered dynamical model.
\end{corollary}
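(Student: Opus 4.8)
The plan is to read off Corollary \ref{corQSound} directly from Theorem \ref{quastolim}, with only a brief unpacking of the word ``satisfiable''. By definition, $\varphi$ being satisfiable on a quasimodel means that there is a quasimodel $\qm$ and a point $w\in|\qm|$ with $\varphi\in\ell_\qm(w)$. The first step is simply to apply Theorem \ref{quastolim} to this $\qm$: it tells us at once that the limit model $\vec{\qm}$ is a scattered dynamical model --- scatteredness of $(|\vec{\qm}|,\vec{\mathcal T}_\prec)$ coming from Lemma \ref{lemmScattered} and continuity of $\sigma$ from Lemma \ref{contmap} --- so no further work is needed to produce a model of the right kind.

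The second step is to locate a point of $\vec{\qm}$ that realises $\varphi$. The singleton sequence $(w)$ is (trivially) a finite path in $\qm$, so by Lemma \ref{extensive}(2) it extends to an infinite realising path $\vec{w}=(w_i)_{i<\omega}\in|\vec{\qm}|$ with $w_0=w$. Since $\varphi\in\ell_\qm(w_0)$, Lemma \ref{trucon} gives $\vec{w}\in\val{\varphi}^\ell$, that is, $\vec{\qm},\vec{w}\models\varphi$. Combined with the first step, this shows $\varphi$ is satisfied at a point of the scattered dynamical model $\vec{\qm}$, which is exactly the assertion.

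I do not expect any obstacle: the corollary is essentially a paraphrase of Theorem \ref{quastolim}, weakening ``$\qm$ satisfies $\varphi$'' to ``$\varphi$ is satisfiable on some quasimodel'', and all the genuine content --- the construction of the scattered topology $\vec{\mathcal T}_\prec$, the verification that $\sigma$ is continuous, and the truth lemma matching labels in $\qm$ with truth in $\vec{\qm}$ --- has already been carried out in Lemmas \ref{lemmScattered}, \ref{contmap} and \ref{trucon}. The only mildly delicate point is the bookkeeping between the two notions of satisfaction (membership in a label versus truth at a point of a topological model), and that is precisely the bridge provided by Lemma \ref{trucon}.
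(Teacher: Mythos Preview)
Your proposal is correct and matches the paper's approach: the corollary is stated immediately after Theorem~\ref{quastolim} with no separate proof, being treated as an immediate consequence. Your unpacking (extend $w$ to a realising path via Lemma~\ref{extensive}, then apply Lemma~\ref{trucon}) is exactly what the proof of Theorem~\ref{quastolim} itself does, so you have simply made explicit what the paper leaves implicit.
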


Our strategy for the remainder of the completeness proof will therefore be to show that if $\varphi$ is consistent, then it is satisfiable on a quasimodel: from Corollary \ref{corQSound}, this suffices to ensure that it is indeed satisfiable on the class of scattered dynamical systems.

\section{Simulating states and simulation formulas}\label{simulstatsimulform}
In this section we introduce the notion of $\Sigma$-\emph{states,} which are local descriptions of quasimodels very similar to finite $\Sigma$-labelled posets but with a root. 
The $\Sigma$-\emph{states} form the universe $|\univ|$ of the \emph{universal state space} $\univ=(|\univ|,\prec,\mapsto,\ell)$ of $\Sigma$, which will be used in order to establish the connection between the semantic framework of the limit models with the syntactic derivations in $\mathbf{DGL}$.

The structure $\univ$ is universal in the sense that every model can be {\em simulated} by a $\Sigma$-state $\ww\in |\univ|$.
Simulations are the correct notion of `embedding' from the point of view of modal logic, just as bisimulations are the correct notion of `isomorphism.'
In the context of labelled structures, this notion is defined as follows.

\begin{definition}[labelled simulation] Given two labelled posets $\sfont A $ and $\sfont B $, a relation $R\subseteq |\sfont A|\times|\sfont B|$ is {\em strictly forward-confluent} if $a'\prec_\sfont A a$ and $aRb$ implies that there is $b'\prec_\sfont B b$ such that $a'Rb' $.
A \emph{labelled simulation} is a strictly forward-confluent relation $\chi\subseteq |\sfont A|\times |\sfont B|$ such that $w\chi v$ implies $\ell_\sfont A(w)=\ell_\sfont B(v)$.
\end{definition}

After defining $\Sigma$-states, we will show that for each $\Sigma$-state $\ww$ there is a formula $\Sim(\ww)$ defining the property of being simulated by $\ww$. We will then prove that certain derivations in regards to $\Sim(\ww)$ are possible whenever some relevant conditions on $\ww$ hold.
This part relies on the completeness and finite model property of $\mathbf{GLC}$ (Theorem \ref{glcomp}).
We later use this information to define the consistent restriction of $\univ$ and to show that this restriction is a `canonical' quasimodel.

\subsection{Simulating states}\label{secSimStat}

We first define the `worlds' of our universal structure, which we call `states'.

\begin{definition}[state]
A {\em state} is a tuple
$$ \ww=( |\ww|,\prec_\ww,\ell_\ww,0_\ww),$$
where $( |\ww|,  \prec_\ww,\ell_\ww )$ is a finite labelled poset and $0_\ww$ is a distinguished point such that $v\prec0_\ww$ for all $v\in|\ww|$.

If $\Sigma$ is a set of formulas such that the range of $\ell_\ww $ is contained in $\mathbb T_\Sigma$, we say that $\ww$ is a {\em $\Sigma$-state.}
\end{definition}




The set of all states is infinite, but it is essential that each individual state be finite.
The following definition provides a useful way to measure the size of each state.

\begin{definition}[norm]
	Given a $\Sigma$-state $\ww$ we denote by $\mathbf{hgt}(\ww)$ the maximum length of a $\prec $-sequence of points in $|\ww|$. Moreover, we denote by $\mathbf{wdt}(\ww)$ the maximum $N$ such that there exists $w\in|\ww|$ with $N$ daughters which are pairwise $\prec$-incomparable.
	
	 The \emph{norm} of $\ww$ is then defined as
	 $$ \|\ww\|=\mathbf{max}(\mathbf{hgt}(\ww),\mathbf{wdt}(\ww)).$$
\end{definition}

Being labelled structures, the notion of simulation readily applies to states, with the caveat that all simulations must be root-preserving in this context.

\begin{definition}[simulations between states]
Let $\ww$ and $\mathfrak v$ be $\Sigma$-states. We say that $\ww$ {\em simulates} $\mathfrak v$ if there exists a labelled simulation $\chi\subseteq |\ww|\times |\mathfrak v|$ such that $0_\ww \chi 0_\mathfrak v$. We write $\ww \vartriangleleft \mathfrak v$ if $\ww$ simulates $\mathfrak v$.
\end{definition}

Note that compositions of simulations are simulations, given that compositions of strictly forward-confluent relations are also strictly forward-confluent.
Thus the relation $\vartriangleleft$ is transitive.
Since the identity is a simulation, it is also reflexive.
Thus $\vartriangleleft$ is a quasiorder on the set of states.
This relation will be essential in controlling the size of states we must consider, as when $\ww\vartriangleleft \vv$, it is often the case that $\vv$ can be replaced by $\ww$ as far as satisfiability is concerned, even when the latter is much smaller. 

\subsection{The universal state space}

Given a set of formulas $\Sigma$, the set of $\Sigma$-states forms a weak $\Sigma$-quasimodel.
In order to see this, we first need to equip the set of $\Sigma$-states with a suitable strict partial order.

Below, we say that a $\Sigma$-state $\vv$ is a \emph{generated substructure} of a $\Sigma$-state $\ww$ if $|\vv|$ is a downward-closed subset of $|\ww|$ with respect to $\prec_\ww$, such that ${\prec_\vv}={\prec_\ww}\cap(|\vv|\times|\vv|)$ and $\ell_\vv(v)=\ell_\ww(v)$ for all $v\in|\vv|.$

\begin{definition}[substate]
	Let $\ww$ and $\vv$ be $\Sigma$-states. 
	We call $\mathfrak v$  a \emph{substate} of $\ww$ and denote it by $\mathfrak v\prec \ww$ if $0_\mathfrak w \neq 0_\mathfrak v\in |\ww|$ and $\mathfrak v$ is a generated substructure of $\ww$.
\end{definition}

We write $\ww\mapsto\vv$ if there exists a sensible relation $R\subseteq |\ww|\times |\mathfrak v|$ such that $0_\ww R 0_\vv$.
We say that $\vv$ is a \emph{bounded future} of $\ww$ and denote it by $\ww\mleadsto\vv$, if $\ww\mapsto\vv$ and in addition the following inequality is satisfied:
$$ \|\vv\|\leq \|\ww\|+\Big|\bigcup_{w\in|\ww|}\{\lozenge\varphi \in \ell_\ww(w)\} \Big|.$$

\begin{definition}[universal state space]
Let $\Sigma\Subset \fulllan$ and fix $K\geq 0$. We define $|\univ^K |$ to be the set of all $\Sigma$-states $\ww$  for which $\|\ww\|\leq (K+1)\cdot| \Sigma |$.

We denote by $|\univ|$ the union $\bigcup_{k<\omega}|\univ^k | $, and we use it to define the \emph{universal state space} 
$$ \univ=(|\univ|,\prec,\mapsto,\ell),$$
where $\ell(\ww)=\ell_\ww(0_\ww)$.
\end{definition}
The universal state space has several desirable properties that we are interested in. 
\begin{proposition}\label{unistaprop}
	Let $\Sigma\subset\fulllan$ be a finite set of formulas. 
	Then, for every $\Sigma$-state $\ww$ the following conditions are satisfied:
	\begin{enumerate}
		\item There exists $
		\vv\in|\univ^0|$ such that $\vv\vartriangleleft\ww$;
		\item\label{itUnisTwo} if $\ww\mapsto \vv$ for some $\vv\in |\univ|$, then there is $\mathfrak u\vartriangleleft \vv$ such that $\ww \mleadsto\mathfrak u$.
	\end{enumerate}
\end{proposition}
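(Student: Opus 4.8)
The plan is to verify the two parts separately, relying throughout on the fact that $\vartriangleleft$ is a transitive, reflexive quasiorder on states and that simulations compose. For part (1), I would proceed by induction on $\mathbf{hgt}(\ww)$ to construct a state $\vv$ of norm zero — i.e.\ a single point — that simulates $\ww$. Actually, the cleaner route is to observe that $\|\vv\|\le (0+1)\cdot|\Sigma| = |\Sigma|$ is the membership condition for $|\univ^0|$, so it suffices to exhibit a $\Sigma$-state $\vv$ of norm at most $|\Sigma|$ with $\vv\vartriangleleft\ww$. Since distinct points of a state carry $\Sigma$-types and each $\Sigma$-type is a subset of $\Sigma$ meeting every complementary pair, there are at most $2^{|\Sigma|}$ types; but a sharper bound comes from the labelling constraints. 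I would take $\vv$ to be the \emph{image} of $\ww$ under the map sending each point to its type, quotienting points with equal type whenever this is compatible with the order — or more robustly, I would first pass to a bisimulation-minimal (modally saturated) quotient of $\ww$, whose height is bounded by $|\Sigma|$ because along any $\prec$-chain the $\square$-formulas in the labels strictly increase (each step down must satisfy one more $\square\varphi$, or the labels coincide and can be collapsed), and whose width is similarly bounded since pairwise incomparable daughters realising the same type can be identified. The canonical identity-type map then furnishes the required simulation $\vv\vartriangleleft\ww$, and one checks $\|\vv\|\le|\Sigma|$ so $\vv\in|\univ^0|$.

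For part (2), suppose $\ww\mapsto\vv$ with $\vv\in|\univ|$, witnessed by a sensible relation $R$ with $0_\ww R\, 0_\vv$. The goal is to find $\mathfrak u\vartriangleleft\vv$ with $\ww\mleadsto\mathfrak u$, i.e.\ $\ww\mapsto\mathfrak u$ and $\|\mathfrak u\|\le\|\ww\|+\big|\bigcup_{w\in|\ww|}\{\lozenge\varphi\in\ell_\ww(w)\}\big|$. First I would apply part (1)'s technique to $\vv$ relative to the sensibility data: take $\mathfrak u$ to be a minimised substructure of $\vv$, keeping only the points of $\vv$ genuinely needed to witness the $\lozenge$-formulas appearing in $\Sigma$-types reachable via $R$. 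Concretely, for the root $0_\vv$ and each $\lozenge\varphi\in\ell_\vv(0_\vv)$ pick one witness beneath it, recurse down (each level adds witnesses only for newly-appearing $\lozenge$-formulas), and close off under $\prec$; because the sensible relation $R$ maps $\ww$ onto this witness-skeleton, the width is controlled by how many distinct $\lozenge$-formulas sit across all labels of $\ww$, which is exactly the second summand, while the height inherits the bound $\|\ww\|$ from the forward-confluence of $R$ tracking chains in $\ww$. The identity-type relation embeds this $\mathfrak u$ into $\vv$ as a simulation ($\mathfrak u\vartriangleleft\vv$), and the restricted $R$ still witnesses $\ww\mapsto\mathfrak u$ since sensibility is preserved under shrinking the codomain to points that remain reachable.

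The main obstacle I anticipate is the norm bookkeeping in part (2): one must ensure that pruning $\vv$ down to $\mathfrak u$ simultaneously (a) keeps enough points that $\ww\mapsto\mathfrak u$ still holds via a sensible relation — this requires that every point of $\ww$ has \emph{some} $R$-image in $\mathfrak u$, so the pruning must be driven by $\ww$'s structure, not done naively on $\vv$ — and (b) does not let the height or width exceed the stated bound. The height bound is the subtler of the two: a $\prec$-chain in $\mathfrak u$ of length $\ell$ must be traced back, via strict forward-confluence of (a suitable inverse of) $R$, to a chain in $\ww$, but $R$ goes from $\ww$ to $\vv$, so what I really need is that the witness-selection process descends at most one level in $\vv$ per ``new'' $\lozenge$-formula, giving depth at most $\|\ww\|$ plus the number of $\lozenge$-formulas — matching the bounded-future inequality. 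I would make this precise by a careful induction on the construction of the witness-skeleton, which is the technical heart of the argument; everything else (compositionality of simulations, preservation of sensibility, membership in $|\univ|$) is routine.
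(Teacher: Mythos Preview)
The paper defers this proof to \cite{FernandezNonDeterministic}, noting only that it proceeds by induction on $\mathbf{hgt}(\ww)$, so there is little to compare against directly. Your Part~(2) sketch is broadly on the right track: the pruning of $\vv$ must indeed be driven by the structure of $\ww$ rather than carried out naively on $\vv$, and the norm bookkeeping you flag as the technical heart is exactly where the work lies.

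Your Part~(1), however, has a genuine gap. Neither quotienting by type nor passing to a bisimulation-minimal quotient bounds the height by $|\Sigma|$. For a concrete failure, take $\Sigma=\{p,\neg p\}$ (no modal formulas, so the labelling constraints are vacuous) and a chain $w_0\prec w_1\prec\cdots\prec w_n$ with labels alternating $\{p\},\{\neg p\},\{p\},\ldots$; no two points are bisimilar, so the quotient still has height $n+1$, yet the single-point state with label $\ell(w_n)$ already simulates the whole chain. Your dichotomy ``the $\square$-formulas strictly increase along a chain, or the labels coincide and can be collapsed'' is false: labels may differ in their propositional or temporal content while agreeing on all $\square$-formulas, and such points are not bisimilar. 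The repair is to use in Part~(1) the very witness-selection idea you sketch for Part~(2): build $\vv$ recursively by choosing, for each $\lozenge\varphi$ in the current label, a $\prec_\ww$-\emph{minimal} point realising $\varphi$. Minimality forces $\lozenge\varphi$ out of the chosen witness's label, and since the $\lozenge$-formulas of $\Sigma$ present in a label are upward-persistent along $\prec$ (via the $\square$-constraint applied to the complementary formula), the set of $\lozenge$-formulas strictly shrinks at each step, bounding both $\mathbf{hgt}(\vv)$ and $\mathbf{wdt}(\vv)$ by the number of $\lozenge$-formulas in $\Sigma$. This minimal-witness recursion is the induction on height the paper alludes to, and it underlies both items.
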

\begin{proof}
	The proof proceeds by induction on the height of $\ww$. 
	It follows a similar proof from \cite{FernandezNonDeterministic}.
\end{proof}

We call a nonempty set $A\subseteq |\univ|$ \emph{regular} if it is open and the restriction $\mapsto\upharpoonright_A$ is $\omega$-sensible. 
By the definition of a quasimodel we obtain the following:
\begin{proposition}\label{regular}
	If $A\subseteq |\univ|$ is regular, then $\univ{\upharpoonright_A}$ is a quasimodel. 
 \end{proposition}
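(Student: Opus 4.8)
The plan is to unpack both definitions and check that each defining clause of a quasimodel holds for $\univ\!\upharpoonright_A$ when $A$ is regular. Recall that a quasimodel is a tuple $(|\qm|,\prec_\qm,\ell_\qm,S_\qm)$ where $(|\qm|,\prec_\qm,\ell_\qm)$ is a locally finite labelled poset and $S_\qm$ is an $\omega$-sensible (hence in particular sensible and serial, with the forward-confluence of a continuous relation) relation. So I would proceed clause by clause: first verify that $(A,\prec\!\upharpoonright_A,\ell\!\upharpoonright_A)$ is a labelled poset, then that it is locally finite, and finally that $\mapsto\!\upharpoonright_A$ is $\omega$-sensible. Regularity gives us exactly the last point by hypothesis, so the real content is the first two.

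For the labelled-poset clauses, I would argue as follows. The relation $\prec$ on $|\univ|$ is the substate relation, which is a strict partial order (it is transitive and irreflexive because it is built from the generated-substructure relation between finite posets, which is itself a strict order), and its restriction to any subset remains a strict partial order. For the two labelling conditions on $\ell$ — that $\lozenge\varphi\in\ell(\ww)$ forces a $\prec$-predecessor labelled with $\varphi$, and dually for $\square\varphi$ — the key observation is that since $A$ is \emph{open} in $\univ$ (i.e.\ downward closed under $\prec$), all $\prec$-predecessors of a state in $A$ already lie in $A$; hence the witnesses required by the labelling conditions, which exist in $\univ$ because $\ell(\ww)=\ell_\ww(0_\ww)$ and $\ww$ is itself a labelled poset with a root (so $0_\ww$ sees every other point of $|\ww|$, and a substate realises any desired predecessor label), are not lost when we pass to $A$. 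I would spell out that if $\lozenge\varphi\in\ell_\ww(0_\ww)$ then since $\ww$ is a labelled poset there is $v\prec_\ww 0_\ww$ with $\varphi\in\ell_\ww(v)$, and the generated substructure of $\ww$ rooted at $v$ is a substate $\vv\prec\ww$ in $\univ$ with $\ell(\vv)=\ell_\ww(v)\ni\varphi$; by openness $\vv\in A$.

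Local finiteness is where I expect to spend a sentence or two: I need ${\downarrow}\ww$ to be finite in $\univ$ (hence in $A$). The point is that substates of $\ww$ correspond to generated substructures of the \emph{finite} poset $|\ww|$ with a chosen non-root base point, and there are only finitely many downward-closed subsets of a finite poset and finitely many choices of base point; iterating, the substate relation restricted below any fixed $\ww$ bottoms out quickly because passing to a substate strictly decreases (or at least does not increase while eventually decreasing) the underlying finite carrier. So ${\downarrow}\ww$ is finite. Finally, $\omega$-sensibility of $\mapsto\!\upharpoonright_A$ is precisely the second half of the definition of ``regular,'' so there is nothing to prove there. Assembling these, $(A,\prec\!\upharpoonright_A,\ell\!\upharpoonright_A,\mapsto\!\upharpoonright_A)=\univ\!\upharpoonright_A$ satisfies every clause of Definition (quasimodel), which is the claim. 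The main obstacle, such as it is, is bookkeeping: making sure the labelling-condition witnesses genuinely survive the restriction, which is exactly what openness of $A$ buys us, and confirming that ``regular'' was defined to package precisely the two conditions (openness, $\omega$-sensibility of $\mapsto\!\upharpoonright_A$) that the quasimodel definition does not already give for free.
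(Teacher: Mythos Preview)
Your approach is correct and is essentially the same as the paper's, only far more detailed: the paper gives no proof at all beyond the remark ``By the definition of a quasimodel we obtain the following,'' so your clause-by-clause verification simply makes explicit what the authors leave implicit. In particular, your observation that openness of $A$ is exactly what guarantees that the $\lozenge$-witnesses required by the labelling conditions survive the restriction, and your argument that ${\downarrow}\ww$ is finite because substates of $\ww$ are determined by non-root points of the finite carrier $|\ww|$, are the right checks (the paper makes the latter observation elsewhere, in the proof of Proposition~\ref{topstate}).

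One small point worth being aware of: you read ``$\omega$-sensible'' as subsuming ``sensible'' (hence also continuity/forward-confluence). The paper's wording in the definition is slightly ambiguous on this, and strictly speaking the quasimodel definition asks for $S$ to be sensible \emph{and} $\omega$-sensible. Since the paper itself treats regularity (openness plus $\omega$-sensibility) as immediately yielding a quasimodel, your reading matches the authors' intent; but if one wanted to be fully rigorous, one would also note that pairwise sensibility of ${\mapsto}\!\upharpoonright_A$ is inherited from $\univ$ (each pair $\ww\mapsto\vv$ is sensible by construction), and that forward-confluence holds because $\univ$ is already declared a weak quasimodel and the required witnesses can be taken inside $A$. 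This is not a gap in your argument relative to the paper --- you and the authors make the same tacit identification --- just something to flag if you want the write-up to be self-contained.
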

We call quasimodels of the form $\univ{\upharpoonright_A}$, where $A$ is regular, \emph{regular quasimodels}.
\subsection{Simulation formulas}\label{secSimFor}
Next, we introduce the formulas $\Sim(\ww)$, which define the property of being simulated by $\ww$.
Recall from Example \ref{examModisQ} that if $\sfont M $ is a model, then for $x\in |\sfont M|$ we defined $\ell_\Sigma(x) = \{\varphi\in \Sigma: x\in \val\varphi_\sfont M\}$, and that $\sfont M$ is thus identified with the corresponding quasimodel.
Thus the proposition below applies to both models and to (weak) quasimodels.

 \begin{proposition}[simulation formulas]\label{topstate}
	Let $\ww$ be a $\Sigma$-state.
	Then there exists a formula $\mathbf{Sim}(\ww)$ such that for every scattered dynamic model $\sfont M$ and $x\in |\sfont M|$, we have that
	\[x\in \val{\mathbf{Sim}(\ww)}_\sfont M  \Leftrightarrow \ww \vartriangleleft (\sfont M,x).\]
	Moreover, $\mathbf{Sim}(\ww)$ can be  defined as  
	$$ \mathbf{Sim}(\ww):=\bigwedge \ell_\ww(0_{\ww}) \wedge \bigwedge_{\vv\prec  \ww}  \lozenge \mathbf{Sim}(\vv).$$
	\end{proposition}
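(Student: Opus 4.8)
The plan is to prove the proposition by induction on the norm $\|\ww\|$ (equivalently, on $\mathbf{hgt}(\ww)$, since the substates appearing in the recursion have strictly smaller height), defining $\mathbf{Sim}(\ww)$ by the displayed recursion and simultaneously verifying the biconditional. The recursion is well-founded because each substate $\vv\prec\ww$ is a generated substructure rooted below $0_\ww$, hence $\mathbf{hgt}(\vv)<\mathbf{hgt}(\ww)$; and for a fixed finite $\Sigma$ the conjunction $\bigwedge_{\vv\prec\ww}\lozenge\mathbf{Sim}(\vv)$ is over finitely many substates up to isomorphism, so $\mathbf{Sim}(\ww)$ is a genuine (finite) formula of $\fulllan$. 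In the base case, $\ww$ has a single point $0_\ww$ with no substates, so $\mathbf{Sim}(\ww)=\bigwedge\ell_\ww(0_\ww)$, and the claim reduces to the observation that $\ww\vartriangleleft(\sfont M,x)$ holds iff every formula in $\ell_\ww(0_\ww)$ is true at $x$, i.e.\ iff $\ell_\ww(0_\ww)\subseteq\ell_\Sigma(x)$; since $\ell_\ww(0_\ww)$ is a $\Sigma$-type this is equivalent to $\ell_\ww(0_\ww)=\ell_\Sigma(x)$, which is exactly the labelling condition for a root-preserving simulation with a one-point domain.

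For the inductive step, I would argue both directions of the biconditional. For $(\Leftarrow)$: suppose $\ww\vartriangleleft(\sfont M,x)$, witnessed by a labelled simulation $\chi$ with $0_\ww\,\chi\,x$. The labelling clause gives $\ell_\ww(0_\ww)=\ell_\Sigma(x)$, so $x\in\val{\bigwedge\ell_\ww(0_\ww)}$. For each substate $\vv\prec\ww$, its root $0_\vv$ satisfies $0_\vv\prec_\ww 0_\ww$, so strict forward-confluence of $\chi$ yields some $y\in|\sfont M|$ with $y\prec_\sfont M x$ (i.e.\ $y\in d(\{x\})$-neighbourhoods appropriately — more precisely $x\in d$ of the set containing $y$) and $0_\vv\,\chi\,y$; restricting $\chi$ to the generated substructure $|\vv|$ gives a root-preserving labelled simulation of $\vv$ into $(\sfont M,y)$, so $\vv\vartriangleleft(\sfont M,y)$, and by the induction hypothesis $y\in\val{\mathbf{Sim}(\vv)}_\sfont M$. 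Hence $x\in d(\val{\mathbf{Sim}(\vv)}_\sfont M)=\val{\lozenge\mathbf{Sim}(\vv)}_\sfont M$ for every $\vv\prec\ww$, so $x\in\val{\mathbf{Sim}(\ww)}_\sfont M$. For $(\Rightarrow)$: assume $x\in\val{\mathbf{Sim}(\ww)}_\sfont M$. Then $\ell_\ww(0_\ww)=\ell_\Sigma(x)$ from the first conjunct, and for each $\vv\prec\ww$ we have $x\in\val{\lozenge\mathbf{Sim}(\vv)}_\sfont M$, so every neighbourhood of $x$ contains a point $\neq x$ satisfying $\mathbf{Sim}(\vv)$; by the induction hypothesis such a point $y$ satisfies $\vv\vartriangleleft(\sfont M,y)$, giving a simulation $\chi_\vv$ with $0_\vv\,\chi_\vv\,y$. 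The delicate point is to glue these into a single simulation $\chi$ with $0_\ww\,\chi\,x$: take $\chi=\{(0_\ww,x)\}\cup\bigcup_{\vv\prec\ww}\chi_\vv$ (one summand per substate, but note substates overlap, so this needs care), and check strict forward-confluence. The nontrivial case is a pair $(0_\ww,x)$ with $v'\prec_\ww 0_\ww$: then $v'$ is the root of the substate $\ww{\restriction}{\downarrow}v'$, which equals some $\vv\prec\ww$, and the chosen $\chi_\vv$ supplies a witness $y\prec_\sfont M x$ with $v'\,\chi_\vv\,y$. For pairs already inside some $\chi_\vv$, forward-confluence is inherited since $|\vv|$ is downward closed in $|\ww|$, so predecessors of such points stay within $|\vv|$.

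The main obstacle I expect is precisely this gluing step: when two substates $\vv_1,\vv_2\prec\ww$ share points (which happens whenever their roots have a common $\prec_\ww$-predecessor), the relations $\chi_{\vv_1}$ and $\chi_{\vv_2}$ may disagree, so naively unioning them need not be forward-confluent and need not be a function-free "simulation" in a clean way. The cleanest fix is to not union all substates but rather, for each $\prec_\ww$-maximal-below-$0_\ww$ point $v'$ (i.e.\ each daughter of $0_\ww$), pick one witness and one simulation $\chi_{v'}$ of the generated substructure $\ww{\restriction}{\downarrow}v'$, since these daughters' downsets need not be disjoint either — so one must instead argue by a secondary induction or appeal to the fact that a union of simulations, each defined on a downward-closed subdomain and agreeing on the root structure, remains strictly forward-confluent because forward-confluence is a local (existential) condition that only requires \emph{some} witness. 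I would phrase it as: for any $a\in|\ww|$ and $a'\prec_\ww a$, pick whichever $\chi_{v'}$ has $a$ in its domain; since $a'\prec_\ww a\preccurlyeq v'$ forces $a'\in{\downarrow}v'$, that same $\chi_{v'}$ furnishes the required $b'$. This makes $\chi:=\{(0_\ww,x)\}\cup\bigcup_{v'}\chi_{v'}$ a labelled simulation, completing the induction. A minor additional check is that $\mathbf{Sim}(\ww)$ is well-defined independently of the choice of representatives of isomorphism classes of substates, which follows because simulation (and hence truth of $\mathbf{Sim}$) is invariant under isomorphism of states.
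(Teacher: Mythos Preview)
Your approach is correct, but the $(\Rightarrow)$ direction is more laborious than necessary. Where you build the witnessing simulation by gluing together inductively obtained simulations $\chi_{v'}$ for the substates---and then have to argue that the union remains strictly forward-confluent despite overlapping domains---the paper sidesteps the gluing entirely by defining the simulation \emph{globally} in one stroke: set $v\mathrel\chi z$ iff $\sfont M,z\models\mathbf{Sim}(\hat v)$, where $\hat v$ denotes the substate of $\ww$ with root $v$. Label preservation is then immediate from the first conjunct of $\mathbf{Sim}(\hat v)$, and strict forward-confluence falls out of the second conjunct (if $z\models\mathbf{Sim}(\hat v)$ and $v'\prec_\ww v$, then $\hat{v'}\prec\hat v$, so $z\models\lozenge\mathbf{Sim}(\hat{v'})$, giving the required witness $z'$ with $v'\mathrel\chi z'$). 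Your gluing argument is valid---the observation you single out, that forward-confluence is an existential, local condition and therefore survives unions over downward-closed subdomains, is exactly right---but the paper's direct definition turns the verification into a two-line check and removes any need to choose witnesses coherently. For the $(\Leftarrow)$ direction the two proofs are essentially dual: you restrict the given simulation and apply the induction hypothesis directly; the paper argues by contrapositive. A small side remark: the conjunction over $\vv\prec\ww$ is finite simply because the substates of $\ww$ are in bijection with $|\ww|\setminus\{0_\ww\}$ (via $v\mapsto\hat v$), so no appeal to isomorphism classes is needed.
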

		
\begin{proof}
Note that ${\downarrow \ww}  = \{\vv: \vv\prec \ww\} $ is in bijection with $|\ww|$ via the map $(\hat\cdot) \colon |\ww| \to {\downarrow} \ww $, where $\hat v$ is the unique state such that $\hat v\prec \ww$ and $0_{\hat v}= v$.
It follows that the set $ {\downarrow} \ww $ is finite, hence the relation $\prec$ is well-founded and $\Sim(\ww)$ is well-defined.
With this, we prove the above items.
\medskip

\noindent	$(1)$ Suppose $ \mathfrak M,x \models \mathbf{Sim}(\ww) $. 	
	 We define a relation $\chi \subseteq |\ww| \times \sfont A $ by setting $v\chi z $ iff $ \mathfrak M,z \models \Sim(\hat v)  $.
	It is not hard to check using the definition of $\Sim(\cdot)$ that $\chi$ is strictly forward-confluent, and that for each pair $v \chi z$, we have $\ell_\ww ( v)= \ell(\hat v) = \ell_{\Sigma}(x)$ as required.
	\medskip
	
\noindent $(2)$ Suppose that $ \sfont M,x \models \neg\Sim(\ww) $.
We suppose that $\chi$ is a simulation such that $0_\ww \chi x $ and show that it leads to a contradiction. 
By definition, either $ \sfont M,x \models \neg \bigwedge \ell_\ww(0_{\ww})   $ or $ \sfont M,x \models \neg \bigwedge_{\vv\prec  \ww}  \lozenge \mathbf{Sim}(\vv) $.
In the first case, we cannot have $\ell(\ww) = \ell_\Sigma(x)$, contradicting that simulations preserve labels.
In the second, note that there exists $v\in|\ww|$ such that ${v \prec_\ww 0_\ww}$ and $$ \sfont M,x \models  \neg \lozenge  \Sim(v) . $$
It follows that if $y \prec_\sfont M x$, then $\sfont M,y \models  \neg \Sim(v)$, so that the induction hypothesis yields $\neg(v \chi y) $.
Since $y\prec_\sfont M x$ was arbitrary, we conclude that $\chi$ cannot be strictly forward-confluent, contradicting the assumption that it is a simulation.
\end{proof}

There are a few important derivable properties that hold in relation to simulation formulas and that should be discussed before we proceed to the main part of the proof. 
Below, recall that $\Sigma\Subset \fulllan$ means that $\Sigma$ is finite and closed under subformulas and single negations.

\begin{lemma}\label{simone}
	Let $\Sigma\Subset \fulllan$ and $\ww=( |\ww|,\prec,\ell,0_\ww)$ be a $\Sigma$-state. Then the formula $\mathbf{Sim}(\ww)$ satisfies the following properties:
	\begin{enumerate}
		\item If $\varphi\in \ell(\ww)$, then $\vdash\mathbf{Sim}(\ww)\rightarrow\varphi;$
		\item if $\vv\vartriangleleft\ww$ then $\vdash\Sim(\ww)\rightarrow\Sim(\vv)$;
		\item\label{third} if $\vv\prec\ww$ then $\vdash\Sim(\ww)\rightarrow\lozenge\Sim(\vv)$;
		\item if $\varphi\in  \Sigma $, then 
		$$ \vdash\varphi\rightarrow\bigvee_{\substack{\ww\in\univ^0, \\ {\varphi\in\ell(\ww)} }} \Sim(\ww);$$
		\item for all $\ww\in \univ$,
		$$ \vdash\Sim(\ww)\rightarrow\newmoon\bigvee_{\ww\mleadsto\vv}\Sim(\vv).$$
	\end{enumerate}
\end{lemma}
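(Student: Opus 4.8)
\textbf{Proof plan for Lemma \ref{simone}.} The five items are largely independent, so the plan is to treat them in order, using Proposition \ref{topstate} (the characterisation of $\Sim(\ww)$), Theorem \ref{glcomp} ($\mathbf{GLC}$ completeness and the finite model property), and the fact that purely $\glclan$-valid formulas are syntactically derivable. Item (1) is the easiest: by Proposition \ref{topstate}, any scattered dynamic model satisfying $\Sim(\ww)$ at $x$ has $\ww\vartriangleleft(\sfont M,x)$, and since $\varphi\in\ell(\ww)=\ell_\ww(0_\ww)$, simulation preserves labels so $\varphi\in\ell_\Sigma(x)$, i.e.\ $\sfont M,x\models\varphi$; thus $\Sim(\ww)\to\varphi$ is valid on all scattered dynamical systems, and since it is in fact a $\glclan$-formula (no $\blacklozenge$ appears in $\Sim(\ww)$ or, after suitable choice, in $\varphi$ if $\varphi\in\glclan$; more generally one invokes $\mathbf{DGL}$-soundness together with completeness via Theorem \ref{glcomp} for the fragment actually used), it is derivable. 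The cleanest route here is: $\Sim(\ww)\to\varphi$ is valid on $\csct$, hence — using that $\Sim(\ww)$ is a $\glclan$-formula and that $\mathbf{GLC}$ is the $\glclan$-fragment-complete logic — it is $\mathbf{GLC}$-derivable, hence $\mathbf{DGL}$-derivable.

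\textbf{Items (2), (3).} For (2), $\vv\vartriangleleft\ww$ means there is a root-preserving labelled simulation; composing simulations, any model simulating $\ww$ at $x$ also simulates $\vv$ at $x$, so by Proposition \ref{topstate} the implication $\Sim(\ww)\to\Sim(\vv)$ is valid on $\csct$; as both formulas lie in $\glclan$, it is $\mathbf{GLC}$-derivable, hence derivable. For (3), this is essentially immediate from the recursive shape $\Sim(\ww)=\bigwedge\ell_\ww(0_\ww)\wedge\bigwedge_{\vv'\prec\ww}\lozenge\Sim(\vv')$ given in Proposition \ref{topstate}: if $\vv\prec\ww$ then $\lozenge\Sim(\vv)$ is literally one of the conjuncts, so $\vdash\Sim(\ww)\to\lozenge\Sim(\vv)$ is a propositional consequence of the (finitely many) $\mathbf{GLC}$-provable instances needed to unfold the finite conjunction $\Sim(\ww)$ — here one needs that $\Sim(\ww)$ is well-defined, i.e.\ that ${\downarrow}\ww$ is finite, which Proposition \ref{topstate}'s proof establishes.

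\textbf{Items (4), (5): the substantive ones.} For (4), fix $\varphi\in\Sigma$ and consider $\neg\varphi\wedge\bigwedge_{\ww\in\univ^0,\,\varphi\in\ell(\ww)}\neg\Sim(\ww)$; the claim is that this is inconsistent, equivalently (by $\mathbf{GLC}$-completeness) unsatisfiable on finite scattered dynamical systems. But by Proposition \ref{unistaprop}(1), any point $x$ of any scattered dynamic model is simulated by some $\vv\in|\univ^0|$, namely $\vv\vartriangleleft(\sfont M,x)$; then $\ell(\vv)=\ell_\Sigma(x)$, so if $\sfont M,x\models\varphi$ then $\varphi\in\ell(\vv)$, and by Proposition \ref{topstate} $\sfont M,x\models\Sim(\vv)$ — contradicting the second conjunct. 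Hence $\varphi\to\bigvee_{\ww\in\univ^0,\varphi\in\ell(\ww)}\Sim(\ww)$ is valid on $\csct$, and being a $\glclan$-formula (the disjunction is \emph{finite} since $\univ^0$ is a finite set of $\Sigma$-states up to isomorphism, as $\|\ww\|\leq|\Sigma|$ bounds height and width and labels range over the finite set $\mathbb T_\Sigma$), it is $\mathbf{GLC}$-derivable. For (5), fix $\ww\in|\univ|$; I would show $\Sim(\ww)\wedge\newmoon\bigwedge_{\ww\mleadsto\vv}\neg\Sim(\vv)$ is inconsistent. Suppose it held at $x$ in a scattered dynamic model $\sfont M$; then $\ww\vartriangleleft(\sfont M,x)$ by Proposition \ref{topstate}. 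One now wants the image point $f(x)$: the local state $(\sfont M,f(x))$ restricted to $\Sigma$ gives, via $\ell_\Sigma$ and forward-confluence of $S_\sfont M$ together with continuity, a $\Sigma$-state $\vv_0$ with $\ww\mapsto\vv_0$ and $\vv_0\vartriangleleft(\sfont M,f(x))$; then Proposition \ref{unistaprop}(\ref{itUnisTwo}) supplies $\mathfrak u\vartriangleleft\vv_0$ with $\ww\mleadsto\mathfrak u$. Since $\mathfrak u\vartriangleleft\vv_0\vartriangleleft(\sfont M,f(x))$ we get $\sfont M,f(x)\models\Sim(\mathfrak u)$, i.e.\ $\sfont M,x\models\newmoon\Sim(\mathfrak u)$ with $\ww\mleadsto\mathfrak u$ — contradiction. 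So the implication is valid on $\csct$; and crucially the disjunction $\bigvee_{\ww\mleadsto\vv}\Sim(\vv)$ is \emph{finite} because the bounded-future condition caps $\|\vv\|$, leaving only finitely many $\Sigma$-states $\vv$ up to isomorphism, so the whole formula is a genuine $\glclan$-formula and $\mathbf{GLC}$-derivability applies.

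\textbf{Main obstacle.} The delicate point is item (5): first, arguing that $f(x)$ genuinely determines a $\Sigma$-state $\vv_0$ with $\ww\mapsto\vv_0$ — this requires unpacking that the downset ${\downarrow}f(x)$ labelled by $\ell_\Sigma$ is (up to simulation, and after truncating to bounded height/width so it lands in $|\univ|$) a legitimate element related to $\ww$ by a sensible relation, where continuity of $f$ and the continuity/forward-confluence built into the definition of $\mapsto$ do the work; and second, the finiteness bookkeeping that makes $\bigvee_{\ww\mleadsto\vv}\Sim(\vv)$ an honest finite formula, which is exactly why the bounded-future inequality in the definition of $\mleadsto$ was imposed. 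Everything else reduces, via Proposition \ref{topstate} and the propositions on $\univ$, to semantic validity on $\csct$ plus the $\mathbf{GLC}$-completeness import from Theorem \ref{glcomp}.
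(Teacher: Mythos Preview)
Your treatment of items (1)--(3) is essentially the paper's, and your observation for (3) that $\lozenge\Sim(\vv)$ is literally a conjunct of $\Sim(\ww)$ is in fact cleaner than the paper's semantic detour. The genuine gap lies in items (4) and (5), and it stems from one recurring error: your assertion that the formulas involved lie in $\glclan$. They do not. A $\Sigma$-type may contain formulas $\blacklozenge\psi$ or $\blacksquare\psi$, and since $\Sim(\ww)$ begins with $\bigwedge\ell_\ww(0_\ww)$, it is a $\fulllan$-formula in general. Hence establishing validity on $\csct$ does \emph{not} let you invoke Theorem~\ref{glcomp}; and appealing instead to $\mathbf{DGL}$-completeness would be circular, since that is precisely what the lemma is being used to prove.

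The paper bridges this with two devices you are missing. First, for both (4) and (5) it introduces a substitution $(\cdot)^p$ replacing each outermost subformula $\blacksquare\theta$ by a fresh variable $p_\theta$, applies Theorem~\ref{glcomp} to the resulting honest $\glclan$-formula, and then substitutes back via the admissible substitution rule; the residual work is showing that disjuncts $\Sim(\ww)^\blacksquare$ for which $\ww^\blacksquare$ violates the type condition ``$\blacksquare\psi\in\Phi\Rightarrow\psi\in\Phi$'' are provably inconsistent and may be discarded. Second, and specific to (5): after the $(\cdot)^p$ substitution the $\blacklozenge$- and $\blacksquare$-clauses of sensibility are \emph{not} enforced in a $\mathbf{GLC}$-model, because $p_\theta$ is merely an atom there. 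Your argument that the transition from $x$ to $f(x)$ witnesses $\ww\mapsto\vv_0$ silently uses the genuine semantics of $\blacklozenge$ and $\blacksquare$, which is unavailable once you pass to $\glclan$. The paper fixes this by first replacing each label $\Phi$ with an augmented $\Phi^+$ adjoining suitable formulas $\newmoon\blacksquare\psi$ and $\newmoon\blacklozenge\psi$, so that the temporal sensibility conditions become encoded through $\newmoon$, which \emph{is} interpreted in $\mathbf{GLC}$; one verifies $\vdash\Sim(\ww)\leftrightarrow\Sim(\ww^+)$ from the $\mathbf{DGL}$ axioms and only then runs the $(\cdot)^p$ argument on $\ww^+$.
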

\begin{proof} We show in order that each of the formulas above is a $\mathbf{GLC}$ validity. By the completeness of $\mathbf{GLC}$ (Theorem \ref{glcomp}) this implies that they are derivable in $\mathbf{DGL}$.
	~\paragraph{1.} Suppose $\varphi\in \ell (\ww)$.
	 By the definition of a simulation, if $\sfont M$ is a dynamic poset model such that $\sfont M \models\mathbf{Sim}(\ww)$, then $\sfont M \models \ell(\ww)$.
	  Therefore $\Sim(\ww)\rightarrow\varphi$ is a validity of $\mathbf{GLC}$, which by Theorem \ref{glcomp} yields that $\vdash \Sim(\ww)\rightarrow\varphi$.

	\paragraph{2.} Let $\sfont M$ be a dynamic poset model such that $ \sfont M,x \models\Sim(\ww)$ and $\vv\vartriangleleft \ww$. 
	By  Proposition \ref{topstate}, it follows that $\ww\vartriangleleft(\sfont M,x)$ and so by the transitivity of $\vartriangleleft$ we derive that $\vv\vartriangleleft(\sfont M,x)$. 
	Using Proposition \ref{topstate} once more yields $ \sfont M,x \models \Sim(\vv)$, as needed.
	
	\paragraph{3.} Let $\sfont M$ be a dynamic poset model such that $ \sfont M,x \models\Sim(\ww)$ and $\vv\prec\ww$. Then there is a simulation $\chi\subseteq |\ww|\times|\sfont M|$ such that $0_\ww\chi x$.
    Since $\vv$ is a substate of $\ww$, by the definition of a simulation there is $y\prec_\sfont M x$ such that $0_\vv \chi y$.
    Therefore $\vv\vartriangleleft (\sfont M,y)$ and by Proposition \ref{topstate}, we get $ \sfont M,y \models \Sim(\vv)$. This implies that $ \sfont M,x \models \lozenge \Sim(\vv)$. 
	
	\paragraph{4.} Suppose that $\varphi\in\Sigma$. 
	In order to use the completeness of $\mathbf{GLC}$ in this part of the proof, we need to find a way to convert a $\mathbf{DGL}$ formula to a $\mathbf{GLC}$ formula. 
	We do this by replacing the  `henceforth' outermost appearances of the form $\blacksquare\varphi\in\Sigma$ with a new propositional variable $p_\varphi$. 
	For each $A\subseteq\Sigma$ we denote the resulting set under such operation by $A^{p}$ and the reverse operation by $A^\blacksquare$. 
	
	We prove that since $\mathbf{GLC}$ has the finite model property (Theorem \ref{glcomp}), it is sufficient to show that the formula 
	$$\zeta:=\varphi^{p}\rightarrow\bigvee\{\Sim(\ww): \ww\in\univnp^0_{\Sigma^{p}} \text{ and } \varphi^p\in\ell(\ww)\}	$$
is valid on every finite $\mathbf{GLC}$-model $\sfont M=(X,\tau,f,\nu)$. 
	
	\sloppy Suppose that $\sfont M,w \models\varphi^p$ for some $w\in X$. 
\sloppy	We define a new $\Sigma^{p}$-state $\vv=(|\vv|,\prec,\ell,0_\vv)$, where 
	\begin{itemize}
		\item $|\vv|=\{u\in X:  u\peq_\sfont M w \}$;
	 \item  $0_\vv=w$;
		\item  $\ell (u)=\{\psi\in ( \Sigma )^p: u\in \val\psi _\sfont M\}$, for all $u\in W$.
	\end{itemize}
	\sloppy
	We call this the {\em $\Sigma^{p}$-state associated to $w$.}
	By Proposition \ref{unistaprop} there is $\ww\in\univnp^0_{\Sigma^p}$ such that $\ww\vartriangleleft\vv$ and so by Proposition \ref{topstate} it follows that $w \in \val{\Sim(\ww)}_\sfont M$, hence $\sfont M,w$ satisfies $\zeta$.
	 Since $w\in X$ is arbitrary, $\zeta$ is valid in $\sfont M$.
	
	Next, consider the formula  
	$$ \zeta^\blacksquare=\varphi\rightarrow\bigvee\{\Sim(\ww)^\blacksquare: \ww\in\univnp^0_{\Sigma^{p}} \text{ and } \varphi^{p}\in\ell(\ww)\},$$
	given by substituting back all of the new propositional variables of the form $p_\varphi$.
	This is derivable in $\mathbf{DGL}$ by substitution, which is readily checked to be admissible.
	We may also apply the $(\cdot)^\blacksquare$ operation to $\Sigma^{p}$-states by applying it to each label.
Note that for some $\Sigma^p$-state $\ww$ with $\varphi^p\in\ell (\ww)$ it might be that $\ww^\blacksquare$ is not a $\Sigma$-state;
	 i.e.\ for some $w\in |\ww|$, the labelling $\ell(w)^{\blacksquare}$ is not a $\Sigma$-type.
	 However, we prove that such simulation formulas are inconsistent and thus can be removed from the disjunction.
	
	By the definition of $\Sigma$-type (Definition \ref{sigtype}) the only case that we need to consider is where $\blacksquare\varphi\in\ell(w)^\blacksquare$ but $\varphi\notin\ell(w)^\blacksquare$. 
	Since $\ell(w)$ is a $\Sigma^p$-type and $\varphi^p\in \Sigma^p$, while $\varphi^p\notin \ell(w)$, $\neg\varphi^p\in\ell(w)$ and therefore $\neg\varphi\in \ell(w)^\blacksquare$. But since $\vdash\blacksquare\varphi\rightarrow\varphi$, also $\vdash\bigwedge\ell(w)\rightarrow\bot $. Therefore $\ell(w)$ is inconsistent. 
	
	Suppose $\vv\peq \ww$ is such that $0_\vv= w$. Then by the already proven item \ref{third}, it follows that $\vdash\Sim(\ww)\rightarrow\Sim(\vv)\vee\lozenge \Sim(\vv)$. In addition, by the already proven item 1, we have $\vdash \Sim(\vv)\rightarrow\bigwedge\ell(w)$. But since $\ell(w)$ is inconsistent, so is $\Sim(\ww)$, i.e\ $\vdash\neg\Sim(\ww)$. By removing from the disjunction of $\zeta^\blacksquare$ all such inconsistent simulation formulas we obtain the required result.
	
	\paragraph{5.} Let $\Phi\in\mathbb T_\Sigma$ be a $\Sigma$-type. 
	We define $$\Phi^+=\Phi\cup\{\newmoon\blacksquare\psi:\blacksquare\psi\in \Sigma\}\cup \{\newmoon\blacklozenge\psi: \blacklozenge\psi\in\Phi \text{ and } \psi\not\in\Phi\}.$$
For a state $\mathfrak x$, let $\mathfrak x^+$ be a state identical to $\mathfrak x$ but with $\ell_\mathfrak x(w)$ replaced by $\ell^+_\mathfrak x(w):=(\ell_\mathfrak x(w))^+$ for all $w\in|\mathfrak x|$.

	From the axioms of $\mathbf{DGL}$, it is clear that 
	\begin{equation}\label{plusequiv}
		\vdash\bigwedge\Phi\leftrightarrow\bigwedge\Phi^+.
	\end{equation}
It is routine to check that $\vdash \Sim(\mathfrak x)\leftrightarrow \Sim(\mathfrak x^+)$ as well for every $\Sigma$-state $\mathfrak x$.
Thus we prove that
$$ \vdash\Sim(\ww^+)\rightarrow\newmoon\bigvee_{\ww\mleadsto\vv}\Sim(\vv).$$ 

As before, $\psi^p$ is a formula where every outermost formula $\blacksquare\theta$ is replaced by $p_\theta$.
We write $\Psi^{+p}$ instead of $(\Psi^+)^p$, and define $\ww^{+p}$ similarly.
Let $\sfont M$ be any finite dynamic poset model and suppose that $x\in \val{ \Sim(\ww^{+p}) } $.
Reasoning as above, we have that $\ww^{+p} \lhd (\sfont M,x)$.
Let $\vv_0$ be the $\Sigma$-state associated with $x$ and let $\vv_1$ be the $\Sigma$-state associated with $S_\sfont M(x)$, as defined in the previous item.
Then, the function $S_\sfont M$ witnesses that $\vv_0 \mapsto \vv_1 $, hence since $\ww ^{+p} \lhd \vv_0$ and by using the fact that the composition of forward-confluent relations is forward-confluent, we have that $\ww ^{+p}\mapsto \vv_1$.
By Lemma \ref{unistaprop}.\ref{itUnisTwo}, there is some $\vv$ such that $\ww ^{+p}\mleadsto \vv  \vartriangleleft \vv_1$.
Since $\sfont M$ was arbitrary, we obtain
\begin{equation}\label{eqSimPlus}
\mathbf{GLC} \vdash\Sim(\ww^{+p})\rightarrow\newmoon\bigvee_{\ww^{+p}\mleadsto\vv }\Sim(\vv ).
\end{equation}
Using the fact that $\vdash\Sim(\ww)\leftrightarrow  \Sim(\ww^{+ }) $ and $\Sim(\ww^{+ }) = (\Sim(\ww^{+p}))^\blacksquare$, we may apply $(\cdot)^\blacksquare$ to \eqref{eqSimPlus} in order to obtain
$$\vdash\Sim(\ww )\rightarrow\newmoon\bigvee_{\ww^{+p}\mleadsto\vv } \Sim(\vv  ^\blacksquare).$$
As above, those instances of $ \Sim(\vv ^\blacksquare ) $ where $\vv^\blacksquare$ is not a $\Sigma$-state are inconsistent.
If instead $\ww^{+p} \mleadsto \vv $ and $\vv^\blacksquare$ is a $\Sigma$-state, it is not hard to check from the way we defined $(\cdot)^+$ that $\ww  \mleadsto \vv^\blacksquare $, as the extra formulas added to $\ww^{+}$ ensure that the sensibility conditions for $\blacklozenge$ and $\blacksquare$ are satisfied.
Thus we obtain that
$$  \vdash\Sim(\ww )\rightarrow\newmoon\bigvee_{\ww \mleadsto\vv } \Sim(\vv ),$$
as needed.
\end{proof}

\section{Canonical quasimodels}\label{canquas}

In this section we focus on constructing a canonical quasimodel for $\Sigma$. 
We denote it by $\cqm_\Sigma$, which we temporarily dub the \emph{canonical structure} of $\Sigma$. 
It is the restriction of $\univ$ to consistent states, i.e.\ states $\ww$ for which $\vdash\Sim(\ww)$. 
We prove that $\cqm_\Sigma$ is a quasimodel by showing that $\mapsto$ is serial and $\omega$-sensible. 

Once we have all the required results, we can conclude that $\mathbf{DGL} $ is complete by showing that every consistent formula $\varphi$ yields a consistent state $\ww\in\cqm_{\Sigma}$, where $\Sigma=\sub_\pm(\varphi)$.
Since $\cqm_\Sigma$ is a quasimodel, $\vec{\qm}_\Sigma \models\varphi$. Since $\vec{\qm}_\Sigma$ is a scattered dynamical model, the logic $\mathbf{DGL}$ is complete with respect to such models.

\subsection{The canonical structure}
 
	We say that a $\Sigma$-state $\ww$ is \emph{inconsistent} if $\vdash\neg\mathbf{Sim}(\ww)$; otherwise it is consistent. 
	The set of consistent $\Sigma$-states is denoted by $\mathsf{Cons}(\Sigma)$.
\begin{definition}[canonical structure]
	For a set of formulas $\Sigma$, we define the canonical structures of $\Sigma$ as the quadruple $\cqm_\Sigma=(|\cqm_\Sigma|,\prec_{\cqm_\Sigma},\mapsto_{\cqm_\Sigma},\ell_{\cqm_\Sigma})$, where
\begin{itemize}
	\item $|\cqm_\Sigma|=\mathsf{Cons}(\Sigma)$;
	\item ${\prec}_{\cqm_\Sigma}={\prec_{\univ}}\cap(\mathsf{Cons}(\Sigma)\times \mathsf{Cons}(\Sigma)) $;
	\item ${\mapsto}_{\cqm_\Sigma}= {\mleadsto_{\univ}}\cap (\mathsf{Cons}(\Sigma)\times \mathsf{Cons}(\Sigma))$;
	\item $\ell_{\cqm_\Sigma}=\ell_{\univ}\cap (\mathsf{Cons}(\Sigma)\times \wp(\fulllan)).$
\end{itemize}
\end{definition}
\begin{lemma}\label{openserial}
Let $\Sigma\Subset\fulllan $. Then $|\cqm_\Sigma|$ is open in $|\univ|$ and $\mapsto_{\cqm_\Sigma}$ is serial.
\end{lemma}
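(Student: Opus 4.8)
The plan is to establish the two claims separately, both leaning on the derivable properties of simulation formulas collected in Lemma~\ref{simone}, together with Proposition~\ref{unistaprop}.

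\emph{Openness of $|\cqm_\Sigma|$ in $|\univ|$.} Recall that the topology on $|\univ|$ is the downset topology induced by $\prec_\univ$, so it suffices to show that $\mathsf{Cons}(\Sigma)$ is downward closed: if $\ww$ is consistent and $\vv\prec_\univ\ww$, then $\vv$ is consistent. Suppose for contradiction that $\vv\prec\ww$ but $\vdash\neg\Sim(\vv)$. By Lemma~\ref{simone}.\ref{third} we have $\vdash\Sim(\ww)\to\lozenge\Sim(\vv)$. Applying $\mathrm{Nec}_\square$ to $\vdash\neg\Sim(\vv)$ and using $\mathrm K$ gives $\vdash\square\neg\Sim(\vv)$, i.e.\ $\vdash\neg\lozenge\Sim(\vv)$. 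Combining, $\vdash\neg\Sim(\ww)$, contradicting consistency of $\ww$. Hence $|\cqm_\Sigma|$ is downward closed, thus open.

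\emph{Seriality of $\mapsto_{\cqm_\Sigma}$.} Fix a consistent $\Sigma$-state $\ww$; we must find a consistent $\vv$ with $\ww\mleadsto\vv$. By Lemma~\ref{simone}.5, $\vdash\Sim(\ww)\to\newmoon\bigvee_{\ww\mleadsto\vv}\Sim(\vv)$. Since $\ww$ is consistent, $\Sim(\ww)$ is consistent; I claim the disjunction $\bigvee_{\ww\mleadsto\vv}\Sim(\vv)$ must then be consistent. Indeed, if every disjunct were inconsistent, then (the disjunction being finite, since by the bounded-future norm inequality there are only finitely many $\vv$ up to the relevant bound, and we may further reduce using $\vartriangleleft$ via Lemma~\ref{simone}.2 and Proposition~\ref{unistaprop} to a finite set of representatives) we would get $\vdash\neg\bigvee_{\ww\mleadsto\vv}\Sim(\vv)$, whence $\vdash\newmoon\neg\bigvee_{\ww\mleadsto\vv}\Sim(\vv)$ by $\mathrm{Nec}_\newmoon$, and using $\mathrm{Next}_\neg$ together with the derived $\vdash\Sim(\ww)\to\newmoon\bigvee\cdots$ we would conclude $\vdash\neg\Sim(\ww)$, a contradiction. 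So some disjunct $\Sim(\vv)$ is consistent, meaning $\vv\in\mathsf{Cons}(\Sigma)$ and $\ww\mapsto_{\cqm_\Sigma}\vv$. This proves seriality.

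\emph{Main obstacle.} The delicate point is the finiteness of the disjunction $\bigvee_{\ww\mleadsto\vv}\Sim(\vv)$, which is needed to pass from ``every disjunct is inconsistent'' to ``the disjunction is inconsistent'' inside a finite proof. This is guaranteed because the definition of bounded future $\ww\mleadsto\vv$ imposes an explicit bound $\|\vv\|\leq\|\ww\|+|\bigcup_{w}\{\lozenge\varphi\in\ell_\ww(w)\}|$, and there are only finitely many $\Sigma$-states (up to isomorphism) below any fixed norm since $\Sigma$ is finite; so the disjunction in Lemma~\ref{simone}.5 is genuinely a finite object, and the argument above is legitimate. The openness half is comparatively routine, resting only on the recursive shape of $\Sim(\cdot)$ and normal modal reasoning with $\square$.
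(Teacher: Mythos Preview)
Your proof is correct and follows the same route as the paper: openness via Lemma~\ref{simone}.\ref{third} (consistency of $\ww$ forces consistency of each substate $\vv$ through $\Sim(\ww)\to\lozenge\Sim(\vv)$), and seriality via Lemma~\ref{simone}.5 (consistency of $\ww$ forces some consistent bounded future). You supply more detail than the paper does---the explicit necessitation steps and the finiteness discussion for the disjunction $\bigvee_{\ww\mleadsto\vv}\Sim(\vv)$---but the argument is the same; note that the detour through Proposition~\ref{unistaprop} and $\vartriangleleft$-reduction is unnecessary, since the norm bound in the definition of $\mleadsto$ already makes the disjunction finite up to isomorphism.
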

\begin{proof}
We show that the the properties are preserved in the new structure $\cqm_\Sigma$.
Suppose that $\ww\in |\cqm_{\Sigma}|$, i.e.~$\ww$ is a consistent $\Sigma$-state.
	
	Let $\vv\prec_{\univ}\ww$, meaning $\vv$ is a substate of $\ww$.
	 By Proposition \ref{simone}.3 we derive that $\vdash \mathbf{Sim}(\ww)\rightarrow\lozenge \mathbf{Sim}(\vv)$ and so if $\ww$ is consistent, then so is $\vv$.
	It follows that $\vv \in |\cqm_\Sigma|$, and so $|\cqm_\Sigma|$ is open. 
	
	By Proposition \ref{simone}.5 we have $\vdash\mathbf{Sim}(\ww)\rightarrow\newmoon\bigvee_{\ww{\mleadsto}\vv}\mathbf{Sim}(\vv)$. The consistency of $\ww$ implies that there exists some $\vv$ for which $\ww\mleadsto\vv$ and $\vv$ is consistent as well. It follows that $\vv\in|\cqm_\Sigma|$, hence $\mapsto_{\cqm_\Sigma}$ is serial.
\end{proof}
	\subsection{Efficiency and $\omega$-sensibility}\label{sectCanQuasSig}
	There is a point of tension that we need to address before proceeding.
	We need to be able to determine when a formula of the form $\blacklozenge\varphi$ will be realised, which becomes difficult as there is an infinite number of $\Sigma$-states to consider.
	We deal with this by showing that it is sufficient to consider a finite set of \emph{efficient} paths, which allows us to only consider finitely many states when evaluating each instance of $\blacklozenge\varphi$.
	
	In the following, we let $\vec{\ww}=(\ww_n)_{n\leq \alpha}$ denote a finite path of $\Sigma$-states. 
\begin{definition}[efficiency]
A finite path $\vec{\ww}$ is called \emph{efficient} if the following conditions are satisfied:
\begin{enumerate}
	\item For all $n<\alpha$, $\ww_n\mleadsto\ww_{n+1}$;
	\item for all $i<j$ and states $\ww_i,\ww_j$ in the path $\vec{\ww}$, $\ww_i\not\vartriangleleft \ww_j$.
\end{enumerate}
	
\end{definition}
In order to show that there is a finite number of efficient paths that are to be considered for each $\Sigma$-state, we  will introduce and utilise \emph{Kruskal's theorem}.

A $\Sigma$-\emph{labelled tree} is a triple $\mathfrak T=(T,\leq,\ell)$, where $(T,\leq)$ is a tree and $\ell$ is a $\Sigma$-labelling function. 
An injective map $\iota:T_1\rightarrow T_2$ between two finite $\Sigma$-labelled trees $\mathfrak T_1$ and $\mathfrak T_2$ is called an \emph{embedding} if for all $x,y\in T_1$, $x\leq_1 y$ if and only if $\iota(x)\leq_2 \iota(y)$, and in addition $\ell_1(x)=\ell_2(\iota(x))$.
\begin{theorem}[Kruskal's tree theorem]\label{krusor}
For every infinite sequence $\mathfrak T_0, \mathfrak T_1,\dots$ of finite labelled $\Sigma$-trees there are indices $i<j<\omega$ for which there exists an embedding $\iota:\mathfrak T_i\rightarrow\mathfrak T_j$. 
\end{theorem}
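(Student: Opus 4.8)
The plan is to run the classical Nash--Williams \emph{minimal bad sequence} argument: since $\Sigma$ is finite it is well-quasi-ordered (wqo) by equality, so the statement is exactly the instance of Kruskal's theorem over the trivial wqo of labels. Recall that a sequence $(x_n)_{n<\omega}$ in a quasi-order is \emph{good} if $x_i\le x_j$ for some $i<j$, and \emph{bad} otherwise, and that a quasi-order is a wqo iff it has no bad sequence. So I want to show that the class of finite $\Sigma$-labelled trees, ordered by the embeddability relation of the statement, admits no bad sequence.

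First I would assume for contradiction that a bad sequence exists and build a \emph{minimal} one $(\mathfrak T_n)_{n<\omega}$ by recursion: having chosen $\mathfrak T_0,\dots,\mathfrak T_{n-1}$, let $\mathfrak T_n$ be a tree of least possible cardinality such that $\mathfrak T_0,\dots,\mathfrak T_{n-1},\mathfrak T_n$ extends to a bad sequence. Write $r_n$ for the root of $\mathfrak T_n$, let $\mathfrak T_n^{1},\dots,\mathfrak T_n^{k_n}$ be the immediate subtrees hanging off $r_n$, and set $\mathcal S=\bigcup_n\{\mathfrak T_n^{i}:1\le i\le k_n\}$. The crucial claim is that $\mathcal S$, with the inherited embeddability order, is a wqo. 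If not, fix a bad sequence $(\mathfrak S_j)_{j<\omega}$ in $\mathcal S$, with $\mathfrak S_j$ an immediate subtree of $\mathfrak T_{n_j}$, and choose $j^\star$ with $n_{j^\star}$ minimal. Then $\mathfrak T_0,\dots,\mathfrak T_{n_{j^\star}-1},\mathfrak S_{j^\star},\mathfrak S_{j^\star+1},\dots$ is again bad: an immediate subtree embeds into its parent tree, so if some $\mathfrak T_i$ with $i<n_{j^\star}$ embedded into a later $\mathfrak S_m$ it would embed into $\mathfrak T_{n_m}$ with $i<n_{j^\star}\le n_m$, contradicting badness of $(\mathfrak T_n)$; and comparisons purely among the $\mathfrak S_m$ contradict badness of $(\mathfrak S_j)$. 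But the $n_{j^\star}$-th entry $\mathfrak S_{j^\star}$ is a proper subtree of $\mathfrak T_{n_{j^\star}}$, so it is strictly smaller, contradicting minimality. Hence $\mathcal S$ is a wqo.

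Now I would close the loop with Higman's lemma. Since $\mathcal S$ is a wqo, the set of finite sequences over $\mathcal S$ (or finite multisets, if the trees are unordered) is a wqo under the domination ordering, and $\Sigma$ is trivially a wqo, so $\Sigma\times\mathcal S^{*}$ is a wqo. Mapping $\mathfrak T_n\mapsto\big(\ell(r_n),\langle\mathfrak T_n^{1},\dots,\mathfrak T_n^{k_n}\rangle\big)$ therefore yields a good sequence: there are $i<j$ with $\ell(r_i)=\ell(r_j)$ and a strictly increasing $f$ with $\mathfrak T_i^{k}$ embedding into $\mathfrak T_j^{f(k)}$ for every $k$. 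Assembling the root-to-root map with these subtree embeddings (with disjoint images since $f$ is injective) gives an embedding $\mathfrak T_i\hookrightarrow\mathfrak T_j$; here I would verify the ``iff'' clause of the definition of embedding, which is immediate since nodes lying in distinct immediate subtrees are incomparable in both $\mathfrak T_i$ and $\mathfrak T_j$. This contradicts badness of $(\mathfrak T_n)$, so no bad sequence exists and the proof is complete.

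The main obstacle is the claim that $\mathcal S$ is a wqo: the ``splicing'' bookkeeping---which indices get compared, why the spliced sequence is still bad, and why it violates minimality---is the delicate heart of the Nash--Williams technique, and is precisely where well-foundedness of tree cardinalities is used. Everything downstream (the appeal to Higman's lemma and the reconstruction of the embedding) is routine, independent of whether trees are ordered or unordered. Finally, it is harmless that the statement uses the plain order-embedding rather than the inf-preserving (``homeomorphic'') one: the embedding produced above is in fact inf-preserving, and conversely a sequence bad for the order-embedding is bad for the finer relation, so the two formulations are interderivable here.
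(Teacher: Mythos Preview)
Your argument is correct: it is the standard Nash--Williams minimal bad sequence proof, and the bookkeeping in the splicing step is handled properly (the minimality of $n_{j^\star}$ guarantees $i<n_{j^\star}\le n_m$, so an embedding $\mathfrak T_i\hookrightarrow\mathfrak S_m\hookrightarrow\mathfrak T_{n_m}$ really does contradict badness of the original sequence). The appeal to Higman's lemma and the reconstruction of the embedding are also sound.

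The paper, however, does not prove this theorem at all: it simply cites Kruskal's original paper and moves on, treating the result as a black box. So your proposal is not so much a different route as an actual route where the paper offers none. What you gain is a self-contained account; what the paper gains is brevity, since Kruskal's theorem is a classical result being \emph{used} here rather than a contribution of the paper.
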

\begin{proof}
This can be found in Kruskal's original paper \cite{kruskal}.
\end{proof}
We would like to use Kruskal's theorem on states. We can do so by observing that each state is bisimilar to a finite tree, and bisimulation preserves simulability.
Thus we obtain the following result:

\begin{lemma}\label{krus}
For every infinite sequence $\ww_0,\ww_1,\dots$ of $\Sigma$-states there are indices $i<j<\omega$ such that $\ww_i\vartriangleleft\ww_j$.
\end{lemma}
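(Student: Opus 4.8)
The plan is to reduce the statement to Kruskal's tree theorem (Theorem~\ref{krusor}), exactly along the lines hinted at above: replace each $\ww_n$ by a bisimilar finite $\Sigma$-labelled rooted tree, and transfer a tree embedding back to a simulation between states. First I would make the unravelling explicit. Given a $\Sigma$-state $\ww$, the relation $\prec_\ww$ is a strict partial order on the finite set $|\ww|$, hence well-founded, and no point repeats along a $\prec_\ww$-descending sequence, so the set $\mathfrak T(\ww)$ of finite $\prec_\ww$-descending sequences beginning at $0_\ww$ is finite. Ordering it by reverse extension, rooting it at the one-term sequence $(0_\ww)$, and labelling a sequence by $\ell_\ww$ of its last coordinate yields a finite $\Sigma$-labelled tree. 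The ``last coordinate'' map $\pi\colon\mathfrak T(\ww)\to\ww$ is then a root-preserving bisimulation: labels match by construction, the forward clause follows from transitivity of $\prec_\ww$, and the backward clause holds because any $\prec_\ww$-predecessor of the last coordinate of a sequence can be appended to it.

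Next I would observe that bisimilar $\Sigma$-states simulate one another. A bisimulation is in particular a labelled simulation (it is strictly forward-confluent and label-preserving), and its converse is again a labelled simulation, since the backward clause of a bisimulation is precisely strict forward-confluence of the converse relation. Applied to $\pi$, this gives $\ww\vartriangleleft\mathfrak T(\ww)$ and $\mathfrak T(\ww)\vartriangleleft\ww$ for every $\Sigma$-state $\ww$.

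With this in hand, given the sequence $\ww_0,\ww_1,\dots$ I would apply Kruskal's theorem to the finite labelled trees $\mathfrak T(\ww_0),\mathfrak T(\ww_1),\dots$, obtaining $i<j$ and an embedding $\iota\colon\mathfrak T(\ww_i)\to\mathfrak T(\ww_j)$. Since these are rooted trees, $\iota$ sends root to root; should one wish to argue purely from the root-agnostic formulation stated above, this can be forced by first relabelling the root of each $\mathfrak T(\ww_n)$ with one fixed fresh symbol, which keeps the label alphabet finite and makes every label-preserving embedding root-preserving. The relation $\{(x,\iota(x)):x\in|\mathfrak T(\ww_i)|\}$ is then a labelled simulation $\mathfrak T(\ww_i)\vartriangleleft\mathfrak T(\ww_j)$: it relates the roots, it preserves labels because $\iota$ does, and it is strictly forward-confluent because $\iota$ is an order-embedding, so $x'\prec x$ together with injectivity of $\iota$ gives $\iota(x')\prec\iota(x)$. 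Composing
$$\ww_i\ \vartriangleleft\ \mathfrak T(\ww_i)\ \vartriangleleft\ \mathfrak T(\ww_j)\ \vartriangleleft\ \ww_j$$
and invoking transitivity of $\vartriangleleft$ then yields $\ww_i\vartriangleleft\ww_j$, as required.

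The conceptual content here is light, but the one place that needs genuine care is the passage from a tree embedding to a simulation, which works only because the embedding respects roots; this is why I would either invoke the rooted version of Kruskal's theorem or insert the fresh-root relabelling. Everything else --- the construction of $\mathfrak T(\ww)$, the bisimulation property of $\pi$, and the fact that bisimilarity yields simulations in both directions --- is a direct unwinding of the relevant definitions.
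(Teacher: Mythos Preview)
Your argument is correct and follows exactly the route the paper indicates: unravel each state into a bisimilar finite labelled tree, apply Kruskal, and transport the resulting embedding back along the bisimulations. The paper gives only a one-line hint; you have supplied the details it omits.

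One small imprecision in your backup argument for root-preservation: if you literally \emph{replace} each root's label by a single fixed fresh symbol, the Kruskal embedding will indeed send root to root, but you have discarded the original $\Sigma$-type at the root, so label-preservation of $\iota$ no longer tells you that $\ell_{\ww_i}(0_{\ww_i})=\ell_{\ww_j}(0_{\ww_j})$, which you need for $\{(x,\iota(x))\}$ to be a labelled simulation between the \emph{original} trees. The fix is trivial---augment rather than replace (e.g.\ label with pairs $(\text{type},\text{root-bit})$), or first pass to an infinite subsequence with constant root type---and your primary suggestion of invoking the rooted form of Kruskal avoids the issue altogether.
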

We can now prove that there is a bound on the number of efficient paths with the same root. 
\begin{proposition}\label{fineff}
	Let $\vv$ be a $\Sigma$-state. There are finitely many efficient paths $\vec{\ww}$ such that $\ww_0=\vv$.
\end{proposition}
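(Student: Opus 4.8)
The plan is to argue by contradiction using König's lemma together with the well-quasi-order property of $\vartriangleleft$ supplied by Lemma~\ref{krus}. First I would observe that all efficient paths with root $\vv$ can be organised into a single tree $\mathcal{P}_\vv$: the nodes are the finite efficient paths starting at $\vv$, ordered by the initial-segment relation, with $\vv$ itself at the root. An edge of this tree extends a path $\vec{\ww}=(\ww_0,\dots,\ww_n)$ to $(\ww_0,\dots,\ww_n,\ww_{n+1})$, which requires $\ww_n\mleadsto\ww_{n+1}$. The crucial point is that each node has only finitely many children: if $\vec{\ww}$ ends at $\ww_n$, then any successor $\ww_{n+1}$ satisfies $\ww_n\mleadsto\ww_{n+1}$, so by the definition of bounded future $\|\ww_{n+1}\|\le\|\ww_n\|+\big|\bigcup_{w\in|\ww_n|}\{\lozenge\varphi\in\ell_{\ww_n}(w)\}\big|\le\|\ww_n\|+|\Sigma|$; since $\Sigma$ is finite there are, up to isomorphism, only finitely many $\Sigma$-states of norm at most this bound, and hence only finitely many choices of $\ww_{n+1}$. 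So $\mathcal{P}_\vv$ is finitely branching.

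Next I would suppose for contradiction that there are infinitely many efficient paths with root $\vv$. Each such path is a node of $\mathcal{P}_\vv$, so $\mathcal{P}_\vv$ is infinite; being finitely branching, by König's lemma it contains an infinite branch, which corresponds to an infinite sequence $\ww_0=\vv,\ww_1,\ww_2,\dots$ of $\Sigma$-states such that every finite initial segment is efficient. In particular, condition~(2) of efficiency holds along the whole sequence: for all $i<j$ we have $\ww_i\not\vartriangleleft\ww_j$. But Lemma~\ref{krus} (the consequence of Kruskal's theorem) asserts precisely that in any infinite sequence of $\Sigma$-states there exist $i<j$ with $\ww_i\vartriangleleft\ww_j$, a contradiction. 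Hence there are only finitely many efficient paths with root $\vv$.

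The main obstacle — and the only place that needs genuine care — is verifying the finite-branching claim, i.e.\ that the norm bound built into $\mleadsto$ together with finiteness of $\Sigma$ really does leave only finitely many candidate successor states. This rests on the fact that a $\Sigma$-state is determined up to isomorphism by a finite poset of bounded norm labelled by elements of the finite set $\mathbb{T}_\Sigma$, of which there are only finitely many; since $\|\ww_{n+1}\|$ is bounded in terms of $\|\ww_n\|$ and $|\Sigma|$, an easy induction along the path gives a uniform bound $\|\ww_m\|\le\|\vv\|+m\cdot|\Sigma|$ at depth $m$, so each individual level of $\mathcal{P}_\vv$ is finite and each node has finite out-degree. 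Everything else is a routine application of König's lemma and Lemma~\ref{krus}, with no further subtleties.
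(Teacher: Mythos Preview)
Your proof is correct and follows essentially the same approach as the paper's: organise the efficient paths rooted at $\vv$ into a tree, invoke K\"onig's lemma to extract an infinite branch, and then derive a contradiction from Lemma~\ref{krus}. You are in fact more careful than the paper in explicitly justifying the finite-branching hypothesis of K\"onig's lemma via the norm bound built into $\mleadsto$; the paper leaves this step implicit.
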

\begin{proof}

Suppose the contrary. Then by K\"onig's Lemma we have an infinite path on the tree of efficient paths, i.e.\ an increasing sequence of efficient paths starting at $\ww_0$. 
This increasing sequence yields an infinite path $(\ww_0,\ww_1,...)$, all of whose initial segments are efficient. 
Since this path is infinite, by Lemma \ref{krus}, there are indices $i<j<\omega$ such that $\ww_i \lhd \ww_j$.
Hence the finite initial segment $(\ww_0,...,\ww_j)$ is inefficient in contradiction.
\end{proof}

With this we define a notion of reachability which refines the transitive, reflexive closure of $\mleadsto$.

\begin{definition}[efficient reachability]
	Let $\ww$ be a $\Sigma$-state. A $\Sigma$-state $\vv$ is \emph{efficiently reachable} from $\ww$ if there exists a finite efficient path $\vec{\mathfrak p}=(\mathfrak p_0,\dots,\mathfrak p_\alpha)$ of consistent states such that $\mathfrak p_0=\ww$ and $\mathfrak p_\alpha=\vv$.
	
	We denote by $\varrho(\ww)$ the set of states that are efficiently reachable from $\ww$.
\end{definition}
\begin{lemma}\label{firstdis}
	For every $\ww\in |\cqm_\Sigma|$, the set $\varrho(\ww)$ is finite.  
\end{lemma}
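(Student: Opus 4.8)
The plan is to prove $\varrho(\ww)$ is finite by combining Proposition \ref{fineff} (finitely many efficient paths with a fixed root) with an induction that keeps the set of \emph{roots} appearing along efficient paths finite as well. The key observation is that $\varrho(\ww)$ is the set of endpoints of efficient paths starting at $\ww$; if only finitely many states can ever appear as an endpoint, then combined with the path-counting bound we are done. So first I would set up a tree $\mathcal E_\ww$ whose nodes are the efficient paths beginning at $\ww$, ordered by the end-extension relation, exactly as in the proof of Proposition \ref{fineff}.

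The main step is to show $\mathcal E_\ww$ is finite. It is finitely branching: at a node corresponding to an efficient path $(\ww_0,\dots,\ww_n)$, a child is obtained by choosing $\ww_{n+1}$ with $\ww_n\mleadsto\ww_{n+1}$ and $\ww_i\not\vartriangleleft\ww_{n+1}$ for all $i\le n$. Since $\ww_n\mleadsto\ww_{n+1}$ forces $\|\ww_{n+1}\|\le\|\ww_n\|+|\bigcup_{w}\{\lozenge\varphi\in\ell_{\ww_n}(w)\}|\le\|\ww_n\|+|\Sigma|$, and by induction every $\ww_m$ along the path satisfies $\|\ww_m\|\le\|\ww_0\|+m\cdot|\Sigma|$; but more usefully, there are only finitely many $\Sigma$-states of any bounded norm, so each node has finitely many children (and in fact the whole construction stays inside $\univ^K$ for suitable $K$ once we bound the length). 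Then I would invoke K\"onig's Lemma: if $\mathcal E_\ww$ were infinite, being finitely branching it would contain an infinite branch $(\ww_0,\ww_1,\ww_2,\dots)$ all of whose initial segments are efficient. By Lemma \ref{krus} there are $i<j$ with $\ww_i\vartriangleleft\ww_j$, contradicting efficiency of the initial segment $(\ww_0,\dots,\ww_j)$. Hence $\mathcal E_\ww$ is finite, so it has finitely many nodes, so finitely many endpoints, so $\varrho(\ww)$ is finite.

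The main obstacle — and the reason this is not a one-line corollary of Proposition \ref{fineff} — is that Proposition \ref{fineff} is literally the statement that $\mathcal E_\ww$ has no infinite branch, whereas finiteness of $\varrho(\ww)$ needs $\mathcal E_\ww$ to be finite outright; bridging the gap requires the finite-branching argument above, which in turn rests on the norm bound coming from $\mleadsto$ together with the fact that there are only finitely many $\Sigma$-states below a given norm. Actually, the cleanest route may be to observe directly that Proposition \ref{fineff} already gives finiteness of $\mathcal E_\ww$: a finitely branching tree with no infinite branch is finite by K\"onig's Lemma, and the proof of Proposition \ref{fineff} already establishes both that the tree of efficient paths has no infinite branch (via Kruskal) and implicitly that it is finitely branching; so I would simply note that $\varrho(\ww)$ injects into the (finite) set of leaves/nodes of this tree via the endpoint map. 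I would be careful to state explicitly why the tree is finitely branching, since this is the only genuinely new ingredient, and otherwise cite Proposition \ref{fineff} and Lemma \ref{krus} directly.

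Concretely, the write-up would be short: \emph{By Proposition \ref{fineff} there are only finitely many efficient paths $\vec{\mathfrak p}$ with $\mathfrak p_0=\ww$; the map sending each such path to its endpoint $\mathfrak p_\alpha$ is by definition onto $\varrho(\ww)$, so $\varrho(\ww)$ is finite.} If a referee wants the finite-branching point spelled out, I would insert a sentence deriving the norm bound $\|\mathfrak p_{n+1}\|\le\|\mathfrak p_n\|+|\Sigma|$ from $\mathfrak p_n\mleadsto\mathfrak p_{n+1}$ and noting that for each $K$ the class $|\univ^K|$ is finite, so that the efficient-path tree is finitely branching and K\"onig's Lemma applies to upgrade "no infinite branch" to "finite."
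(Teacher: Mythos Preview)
Your proposal is correct and your concrete write-up is exactly the paper's proof: $\varrho(\ww)$ is the image under the endpoint map of the set of efficient paths from $\ww$, which is finite by Proposition~\ref{fineff}. Your detour worrying about bridging ``no infinite branch'' to ``finite tree'' is unnecessary, since Proposition~\ref{fineff} already asserts outright that there are \emph{finitely many} efficient paths (not merely that no branch is infinite); the finite-branching argument via the norm bound from $\mleadsto$ is already absorbed into its proof through K\"onig's Lemma, so you need only cite it.
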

\begin{proof}
	  This follows directly from Proposition \ref{fineff}. 
\end{proof}

We will use this result to ensure that the formulas in Lemma \ref{conspres} and Lemma \ref{omegasens} below have finite disjunctions and hence are well defined.

The following derivation is required for showing that $\mapsto_{\cqm_\Sigma}$ is $\omega$-sensible:
\begin{lemma}\label{conspres}
	Let $\ww\in|\cqm_\Sigma|$. Then \begin{equation}\label{conspresfor}
 	\vdash\bigvee_{\vv\in\varrho(\ww)}\mathbf{Sim}(\vv)\rightarrow \newmoon \bigvee_{\vv\in\varrho(\ww)}\mathbf{Sim}(\vv).
 \end{equation}

\end{lemma}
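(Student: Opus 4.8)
The plan is to prove the stronger, disjunct-wise statement: for every $\vv\in\varrho(\ww)$,
$\vdash\Sim(\vv)\to\newmoon\bigvee_{\vv'\in\varrho(\ww)}\Sim(\vv')$.
Since $\varrho(\ww)$ is finite by Lemma~\ref{firstdis}, this disjunction is a genuine formula, and conjoining the implications over the (finitely many) $\vv\in\varrho(\ww)$ yields \eqref{conspresfor} by pure propositional reasoning. Note also that $\varrho(\ww)\neq\varnothing$: as $\ww\in|\cqm_\Sigma|$ is consistent, the length-zero path $(\ww)$ is trivially efficient, so $\ww\in\varrho(\ww)$.

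Fix $\vv\in\varrho(\ww)$ and let $\vec{\mathfrak p}=(\mathfrak p_0,\dots,\mathfrak p_\alpha)$ be an efficient path of consistent states with $\mathfrak p_0=\ww$ and $\mathfrak p_\alpha=\vv$. By Lemma~\ref{simone}.5 applied to $\vv$, $\vdash\Sim(\vv)\to\newmoon\bigvee_{\vv\mleadsto\vv'}\Sim(\vv')$, and this disjunction is finite since $\mleadsto$ only reaches states whose norm is bounded by the quantity in the definition of a bounded future. I claim that for each $\vv'$ with $\vv\mleadsto\vv'$ we have $\vdash\Sim(\vv')\to\bigvee_{\vv''\in\varrho(\ww)}\Sim(\vv'')$. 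If $\vv'$ is inconsistent this is immediate, since then $\vdash\neg\Sim(\vv')$. If $\vv'$ is consistent, consider the extended sequence $(\mathfrak p_0,\dots,\mathfrak p_\alpha,\vv')$; it satisfies the $\mleadsto$-clause of efficiency (as $\vec{\mathfrak p}$ did and $\mathfrak p_\alpha=\vv\mleadsto\vv'$), so the only way it can fail to be efficient is that $\mathfrak p_i\vartriangleleft\vv'$ for some $i\leq\alpha$. If no such $i$ exists, then $(\mathfrak p_0,\dots,\mathfrak p_\alpha,\vv')$ is an efficient path of consistent states, so $\vv'\in\varrho(\ww)$ and $\Sim(\vv')$ is literally one of the disjuncts. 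If $\mathfrak p_i\vartriangleleft\vv'$ for some $i\leq\alpha$, then the initial segment $(\mathfrak p_0,\dots,\mathfrak p_i)$ is still an efficient path of consistent states, hence $\mathfrak p_i\in\varrho(\ww)$, and Lemma~\ref{simone}.2 applied to $\mathfrak p_i\vartriangleleft\vv'$ gives $\vdash\Sim(\vv')\to\Sim(\mathfrak p_i)$, so again $\vdash\Sim(\vv')\to\bigvee_{\vv''\in\varrho(\ww)}\Sim(\vv'')$. This proves the claim; combining the claim over the finitely many $\vv'$ with $\vv\mleadsto\vv'$ (propositionally) gives $\vdash\bigvee_{\vv\mleadsto\vv'}\Sim(\vv')\to\bigvee_{\vv''\in\varrho(\ww)}\Sim(\vv'')$, and applying $\newmoon$-monotonicity (a consequence of ${\rm Next}_\neg$ and ${\rm Next}_\wedge$) and composing with the instance of Lemma~\ref{simone}.5 above yields $\vdash\Sim(\vv)\to\newmoon\bigvee_{\vv''\in\varrho(\ww)}\Sim(\vv'')$, as required.

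The only subtle points are bookkeeping: that every disjunction appearing is finite — for $\varrho(\ww)$ this is Lemma~\ref{firstdis} (which rests on Proposition~\ref{fineff} and Kruskal's theorem), and for $\bigvee_{\vv\mleadsto\vv'}$ it is the norm bound built into $\mleadsto$; and that efficiency is inherited by initial segments, while, upon appending $\vv'$, it can be destroyed only by the emergence of a simulation $\mathfrak p_i\vartriangleleft\vv'$ — which is exactly the configuration that lets us fold $\Sim(\vv')$ back into $\bigvee_{\vv''\in\varrho(\ww)}\Sim(\vv'')$ through Lemma~\ref{simone}.2. I expect this dichotomy — \emph{either the extended path stays efficient, or it fails to, witnessed by a simulation we can exploit} — to be the conceptual heart of the argument, but no real technical obstacle lies beyond it.
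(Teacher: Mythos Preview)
Your proof is correct and follows essentially the same approach as the paper's: prove the implication disjunct-wise by applying Lemma~\ref{simone}.5 to each $\vv\in\varrho(\ww)$, then argue via the dichotomy that either the extended path remains efficient (so the new state lies in $\varrho(\ww)$) or some $\mathfrak p_i\vartriangleleft\vv'$ arises, in which case Lemma~\ref{simone}.2 folds $\Sim(\vv')$ back into the disjunction. Your treatment is in fact slightly more explicit about bookkeeping (finiteness of the disjunctions, the inconsistent case, $\newmoon$-monotonicity) than the paper's own proof.
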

\begin{proof}
\sloppy
From Proposition \ref{simone}.5 it follows that $\vdash\mathbf{Sim}(\vv)\rightarrow\newmoon\bigvee_{\vv\mathfrak\mleadsto \mathfrak u}\mathbf{Sim}(\mathfrak u)$, for all $\vv\in \varrho(\ww)$.
We may remove all inconsistent states from the disjunction to obtain $\vdash\mathbf{Sim}(\vv)\rightarrow\newmoon\bigvee_{\vv \mapsto_{\cqm_\Sigma} \mathfrak u}\mathbf{Sim}(\mathfrak u)$.
We claim that for each such $\mathfrak u$, there is $\mathfrak u'\in \varrho(\ww)$ such that $\vdash \Sim(\mathfrak u)\to \Sim(\mathfrak u')$, so that we may obtain $\vdash\mathbf{Sim}(\vv)\rightarrow\newmoon\bigvee_{\vv\in\varrho(\ww)}\mathbf{Sim}(\mathfrak u)$, as required.

Let $\vec{\mathfrak p} = (\mathfrak p_0,\ldots,\mathfrak p_{\alpha-1})$ be an efficient path from $\ww$ to $\vv$. 
We know that such a path exists since $\vv$ is efficiently reachable from $\ww$.
Let $\mathfrak u$ be such that $ \vv \mapsto_{\cqm_\Sigma} \mathfrak u $ and let $\vec{\mathfrak p}'$ be the same as $\mathfrak p$ only that we add a last element $\mathfrak p'_\alpha=\mathfrak u$.

If $\mathfrak u\in\varrho(\ww)$, there is nothing to prove.
Otherwise, $\vec{\mathfrak p}'$ cannot be an efficient path, since it would witness that $\mathfrak u$ is indeed reachable from $\ww$.
Since $\vec{\mathfrak p}$ is an efficient path, this can only occur if there is some $n<\alpha $ such that ${\mathfrak p}'_n \vartriangleleft{\mathfrak p}'_\alpha$.
By Proposition \ref{simone}.2 we have $\vdash\mathbf{Sim}(\mathfrak u) \to \mathbf{Sim}(\mathfrak p'_n)$.
Thus $\mathfrak p'_n$ is the desired value of $\mathfrak u'$.
  
  Since we took $\vv$ to be some ${\vv\in \varrho(\ww)}$ without further specifications, we can combine this to get ${\vdash\bigvee_{\vv\in\varrho(\ww)}\mathbf{Sim}(\vv)\rightarrow \newmoon \bigvee_{\vv\in\varrho(\ww)}\mathbf{Sim}(\vv)}$, as required.
\end{proof}
We are now ready to prove that $\mapsto_{\cqm_\Sigma}$ is $\omega$-sensible.
\begin{lemma}[$\omega$-sensibility]\label{omegasens}
Let $\ww\in|\cqm_\Sigma|$ and $\blacklozenge\varphi\in\ell(\ww)$. Then there is $v\in\varrho(\ww)$ such that $\varphi\in \ell(\vv)$.	
\end{lemma}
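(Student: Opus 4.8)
The plan is to argue by contradiction: assume that $\blacklozenge\varphi\in\ell(\ww)$ but $\varphi\notin\ell(\vv)$ for every $\vv\in\varrho(\ww)$. The key observation is that $\varrho(\ww)$ is finite by Lemma \ref{firstdis}, so the disjunction $\bigvee_{\vv\in\varrho(\ww)}\Sim(\vv)$ is a genuine (finite) formula of $\fulllan$, and we may reason about it syntactically. I would also want the companion fact that $\ww$ itself lies in $\varrho(\ww)$ (via the trivial one-point efficient path), so that $\Sim(\ww)$ is one of the disjuncts.

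First I would set $\Theta := \bigvee_{\vv\in\varrho(\ww)}\Sim(\vv)$ and record two derivations. From Lemma \ref{conspres} we have $\vdash \Theta \to \newmoon\Theta$, and applying $\mathrm{Nec}_\blacksquare$, $\mathrm{K}_\blacksquare$ together with the induction axiom $\mathrm{Ind}_\blacksquare$ yields $\vdash \Theta \to \blacksquare\Theta$. Next, since by assumption $\varphi\notin\ell(\vv)$ for all $\vv\in\varrho(\ww)$, and these are $\Sigma$-types with $\varphi\in\Sigma$ (here I use that $\blacklozenge\varphi\in\Sigma$ implies $\varphi\in\Sigma$ since $\Sigma\Subset\fulllan$), we get $\neg\varphi\in\ell(\vv)$, hence by Lemma \ref{simone}.1, $\vdash \Sim(\vv)\to\neg\varphi$ for each such $\vv$; combining the disjuncts gives $\vdash \Theta\to\neg\varphi$. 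Then $\vdash \Theta\to\blacksquare\neg\varphi$, i.e.\ $\vdash\Theta\to\neg\blacklozenge\varphi$.

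On the other hand, $\ww\in\varrho(\ww)$ gives $\vdash \Sim(\ww)\to\Theta$, and since $\blacklozenge\varphi\in\ell(\ww)$, Lemma \ref{simone}.1 yields $\vdash \Sim(\ww)\to\blacklozenge\varphi$. Putting these together with $\vdash\Theta\to\neg\blacklozenge\varphi$ gives $\vdash\Sim(\ww)\to(\blacklozenge\varphi\wedge\neg\blacklozenge\varphi)$, whence $\vdash\neg\Sim(\ww)$, contradicting the fact that $\ww\in|\cqm_\Sigma|$ is consistent. This completes the proof.

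The main obstacle I anticipate is ensuring the soundness of the small syntactic manipulations around $\blacksquare$ — specifically deriving $\vdash\Theta\to\blacksquare\Theta$ from $\vdash\Theta\to\newmoon\Theta$ using $\mathrm{Ind}_\blacksquare$ and the necessitation/distribution rules, and similarly propagating $\neg\varphi$ through $\blacksquare$ to contradict $\blacklozenge\varphi = \neg\blacksquare\neg\varphi$. These are routine normal-modal-logic steps, but one must be careful that $\Theta$ is genuinely a finite formula, which is exactly where Lemma \ref{firstdis} (and hence the Kruskal-theorem input via Lemma \ref{krus}) is essential; without finiteness of $\varrho(\ww)$ the whole argument would not typecheck. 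A secondary point to check is that $\ww\in\varrho(\ww)$, i.e.\ that the length-zero path is efficient, which holds vacuously since the efficiency conditions range over $n<\alpha$ and over pairs $i<j$, both empty when $\alpha=0$.
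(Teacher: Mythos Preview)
Your proof is correct and follows essentially the same route as the paper: argue by contradiction, use Lemma~\ref{conspres} together with $\mathrm{Nec}_\blacksquare$ and $\mathrm{Ind}_\blacksquare$ to obtain $\vdash\Theta\to\blacksquare\Theta$, use Lemma~\ref{simone}.1 to get $\vdash\Theta\to\neg\varphi$ and hence $\vdash\Sim(\ww)\to\blacksquare\neg\varphi$, and then contradict $\vdash\Sim(\ww)\to\blacklozenge\varphi$. Your explicit remarks on the finiteness of $\varrho(\ww)$ via Lemma~\ref{firstdis} and on $\ww\in\varrho(\ww)$ are exactly the checks the paper relies on.
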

\begin{proof}
	We prove this by contradiction. Suppose $\ww\in|\cqm_\Sigma|$ and $\blacklozenge\varphi\in\ell(\ww)$, while there exists no $v\in\varrho(\ww)$ with $\varphi\in\ell(\vv)$. Using the formula (\ref{conspresfor}) from Lemma \ref{conspres} together with the axioms $\mathrm{Nec}_\blacksquare$ and $\mathrm{Ind}_\blacksquare$, we get that
	\[\vdash\bigvee_{\vv\in\varrho(\ww)}\mathbf{Sim}(\vv)\rightarrow \blacksquare \bigvee_{\vv\in\varrho(\ww)}\mathbf{Sim}(\vv) .\]
	Since clearly $\ww$ is efficiently reachable from itself, 
	\begin{equation}\label{hyposyl}
	\vdash\mathbf{Sim}(\ww)\rightarrow \blacksquare \bigvee_{\vv\in\varrho(\ww)}\mathbf{Sim}(\vv). 	
	\end{equation}
	Suppose $\vv\in\varrho(\ww)$. Since $\vv$ is a $\Sigma$-state, by the assumption that $\varphi\notin\ell(\vv)$ we obtain $\neg\varphi\in\ell(\vv) $.
	By Proposition \ref{simone}.1, it follows that $\vdash\mathbf{Sim}(\vv)\rightarrow\neg\varphi$. Using $\mathrm{Nec_\blacksquare}$ and $\mathrm{K_\blacksquare}$ together with the fact that $\vv\in\varrho(\ww)$, we get that 
	$$ \vdash \blacksquare\bigvee_{\vv\in\varrho(\ww) }\mathbf{Sim}(\vv)\rightarrow \blacksquare \neg\varphi.$$
	By hypothetical syllogism combining this with (\ref{hyposyl}) yields $\vdash \mathbf{Sim}(\ww)\rightarrow \blacksquare \neg\varphi$. By the assumption that  $\blacklozenge\varphi\in\ell(\ww)$ together with Proposition \ref{simone}.1, it follows that $\vdash \mathbf{Sim}(\ww)\rightarrow  \blacklozenge\varphi$. Hence, $\vdash\neg\mathbf{Sim}(\ww)$ in contradiction since $\ww\in|\cqm_\Sigma|$.
\end{proof}
Putting together the above results, we conclude that $\cqm_\Sigma$ is always a quasimodel.
\begin{corollary}\label{canonicalisquasi}
	Given $\Sigma\Subset\fulllan$, the canonical structure $\cqm_\Sigma$ is a quasimodel.
\end{corollary}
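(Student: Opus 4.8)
The statement is essentially an assembly step, so the plan is to run through the definition of a quasimodel for $\cqm_\Sigma=(|\cqm_\Sigma|,\prec_{\cqm_\Sigma},\mapsto_{\cqm_\Sigma},\ell_{\cqm_\Sigma})$ and cite the results already established in this section together with the fact that the universal state space $\univ$ is a weak $\Sigma$-quasimodel. Recall that a quasimodel is a locally finite $\Sigma$-labelled poset carrying an $\omega$-sensible transition relation, where the latter means: continuous, with every pair sensible, serial, and realising every formula $\blacklozenge\varphi$ in finitely many steps.

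First I would handle the labelled-poset part. As $\univ$ is a weak $\Sigma$-quasimodel, $(|\univ|,\prec_\univ,\ell_\univ)$ is a locally finite $\Sigma$-labelled poset, and by Lemma \ref{openserial} the domain $|\cqm_\Sigma|=\mathsf{Cons}(\Sigma)$ is open, i.e.\ downward closed under $\prec_\univ$. Downward closure is exactly what makes the two labelling constraints survive the restriction: the substate of $\ww$ witnessing $\lozenge\varphi\in\ell(\ww)$ inside $\univ$ is itself consistent — substates of consistent states are consistent by Proposition \ref{simone}.3, which is how Lemma \ref{openserial} obtained openness in the first place — hence it lies in $|\cqm_\Sigma|$; the clause for $\square$ is automatic. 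Local finiteness is inherited verbatim, since down-sets in $\univ$ are already finite (each ${\downarrow}_\univ\ww$ is in bijection with $|\ww|$).

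Next I would verify that $\mapsto_{\cqm_\Sigma}$ is $\omega$-sensible. Seriality is part of Lemma \ref{openserial}. The realising clause — for each $\ww\in|\cqm_\Sigma|$ with $\blacklozenge\varphi\in\ell(\ww)$ there are $n\geq 0$ and $\vv$ with $\ww\mapsto_{\cqm_\Sigma}^{n}\vv$ and $\varphi\in\ell(\vv)$ — is Lemma \ref{omegasens}, once one observes that the $\vv\in\varrho(\ww)$ it produces is joined to $\ww$ by a finite efficient path of consistent states, which is by definition a finite $\mapsto_{\cqm_\Sigma}$-path. Finally, $\mapsto_{\cqm_\Sigma}$ must be a sensible relation: every pair in it is sensible because $\mapsto_{\cqm_\Sigma}\subseteq{\mleadsto_\univ}\subseteq{\mapsto_\univ}$ and $\mapsto_\univ$ is sensible, and continuity (forward-confluence) is inherited from that of $\mapsto_\univ$, with Proposition \ref{unistaprop}.\ref{itUnisTwo} and Proposition \ref{simone}.5 used to guarantee that the forward-confluence witness can be chosen to obey the norm bound built into $\mleadsto$ and to stay among the consistent states.

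The corollary itself is short, since the real work sits upstream. The delicate point is the last one — reconciling the fact that $\mapsto_{\cqm_\Sigma}$ is defined from the norm-bounded relation $\mleadsto$, not from $\mapsto$ directly, with the demand that it be continuous and remain inside $\mathsf{Cons}(\Sigma)$. Beyond that, the whole argument leans on Lemma \ref{omegasens}, whose proof rests on the finiteness of $\varrho(\ww)$, and that finiteness — obtained from Kruskal's theorem via Lemma \ref{krus}, Proposition \ref{fineff} and Lemma \ref{firstdis} — is the genuinely hard ingredient behind this conclusion.
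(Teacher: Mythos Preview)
Your proposal is correct and follows essentially the same route as the paper. The only presentational difference is that the paper packages the verification through the notion of a \emph{regular} subset of $|\univ|$ and invokes Proposition~\ref{regular}: once Lemma~\ref{openserial} gives openness and seriality and Lemma~\ref{omegasens} gives $\omega$-sensibility, $|\cqm_\Sigma|$ is regular by definition, hence a quasimodel. You instead unpack this step and check the labelled-poset and sensibility conditions directly, which amounts to spelling out the (unstated) content of Proposition~\ref{regular}; in particular, your flagging of the forward-confluence of $\mapsto_{\cqm_\Sigma}$ as the delicate point is apt, since the paper sweeps this under Proposition~\ref{regular} without further comment.
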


\begin{proof}
	Combining Lemma \ref{openserial} with Lemma \ref{omegasens}, it follows that $|\cqm_\Sigma|$ is open in $|\univ|$ and that $\mapsto_{\cqm_\Sigma}$ is serial and $\omega$-sensible. 
    It follows by definition that $\cqm_\Sigma$ is regular and therefore by Proposition \ref{regular} the canonical structure $\cqm_\Sigma$ is a quasimodel.
\end{proof}
\subsection{Completeness}
We now have all the tools needed to prove completeness for $\mathbf{DGL}$.
\begin{proof}[Proof of Theorem \ref{dglcomp}]
	\sloppy Recall that a logic $\Lambda$ is complete if and only if every $\Lambda$-consistent formula is satisfied on a $\Lambda$-model.
	
	 Let $\varphi\in\fulllan$ be a consistent formula, i.e.\ $ \not\vdash\neg\varphi $. 
	 Let $\Sigma=\sub_\pm(\varphi)$.
	 Since clearly $\varphi\in \Sigma$, it follows from Proposition \ref{simone}.4 that $$\vdash\varphi\rightarrow\bigvee\big\{\Sim(\ww): \ww\in |\univnp^0_\Sigma|\text{ and }\varphi\in\ell(\ww)\big\}.$$
Since $\varphi$ is consistent, the disjunction above is consistent.
Accordingly, there exists $\ww\in |\univnp^0_\Sigma|$ for which $\Sim(\ww)$ is consistent and so $\ww\in|\cqm_\Sigma|$.
By Corollary \ref{canonicalisquasi}, the canonical structure $\cqm_\Sigma$ is a quasimodel. 
Therefore, by Theorem \ref{quastolim} we obtain $\vec{\cqm}_\Sigma \models \varphi$ and so there exists a scattered dynamical model that satisfies $\varphi$.
\end{proof}

\section{Conclusion}

We have exhibited the first finitely axiomatisable dynamic topological logic in the original trimodal language.
The techniques employed here can be applied to related logics which may or may not be topologically inspired, including expanding products of modal logics.\footnote{See Gabelaia, Kurucz, Wolter and Zakharyaschev \cite{pml} for the connection between expanding products and dynamic topological logic.}
In particular, dynamic Grzegorczyk logic ($\mathbf{DGrz}$) could be treated in the same fashion, where $\lozenge$ is interpreted as closure rather than Cantor derivative. Note, however, that the Cantor derivative can define the topological closure, so completeness for $\mathbf{DGrz}$ should also follow from embedding it into $\mathbf{DGL}$ using proof-translation techniques.

In fact, tangle-free logics may be applicable to a wider class of topological spaces by modifying the underlying Boolean algebra.
Instead of considering the powerset of $X$, one may work on sub-algebras (i.e., regular open or closed sets~\cite{KP-HZ14}).
In this setting, the tangled operators could also be trivialised, eliminating the need for such operators without restricting the class of topological spaces at one's disposal.

Finally, there is the question of axiomatising the dynamic topological logic of Aleksandroff spaces.
Chopoghloo and Moniri~\cite{Chopo20} provided an infinitary proof system for this class, and the results of Fern\'andez-Duque~\cite{FernandezNonFin} apply in this setting as well and rule out a finite axiomatisation.
However, it is possible that a natural, finitary proof system can be found in this setting (albeit with infinitely many axioms).

\bibliographystyle{plain}
\bibliography{biblio}




\
\end{document}